\newcommand{\textcyr}[1]{%
 {\fontencoding{OT2}\fontfamily{wncyr}\fontseries{m}\fontshape{n}
 \selectfont #1}}
\newcommand{\Sha}{{\mbox{\textcyr{Sh}}}}
\newcommand{\Tha}{{\mbox{\textcyr{Q}}}}
\numberwithin{equation}{section}
\def\br{\text{Br}}
\def\bz{{\mathbb Z}\,}
\def\bq{{\mathbb Q}}
\def\bg{{\mathbb G}}
\def\spec{{\rm{Spec}}\,}
\def\fb{\overline{F}}
\def\ut{\widetilde{U}}
\def\img{{\rm{Im}}\,}
\def\tto{\tt^{\le\circ}}
\def\sqo{{\s Q}^{\le\circ}}
\def\spo{{\s P}^{\le\circ}}
\def\ba{\mathbb A}
\def\be{\kern -.1em}
\def\lbe{\kern -.05em}
\def\s{\mathcal }
\def\ra{\rightarrow}
\def\e{\kern 0.08em}
\def\le{\kern 0.03em}
\def\ng{\kern -0.04em}
\def\tt{\widetilde{\s T}}
\def\fc{\fb\!\phantom{.}^{\lbe *}}
\def\dimn{\text{dim}\,}
\def\sto{{\s T}^{\le\circ}}
\def\krn{{\rm{Ker}}\,}
\def\cok{{\rm{Coker}}\,}
\newtheorem{lemma}{Lemma}[section]
\newtheorem{theorem}[lemma]{Theorem}
\newtheorem{corollary}[lemma]{Corollary}
\newtheorem{proposition}[lemma]{Proposition}
\theoremstyle{definition}
\newtheorem{definition}[lemma]{Definition}
\theoremstyle{remark}
\newtheorem{remark}[lemma]{Remark}
\newtheorem{example}[lemma]{Example}
\begin{document}

\title[Class groups of tori and capitulation]{On N\'eron-Raynaud class groups of tori and the Capitulation Problem}

\subjclass[2000]{Primary 20G30; Secondary 11R99}

\author{Cristian D. Gonz\'alez-Avil\'es}
\address{Departamento de Matem\'aticas, Universidad de La Serena, Chile}
\email{cgonzalez@userena.cl}

\keywords{Class groups of tori, N\'eron-Raynaud models, Nisnevich
cohomology, Capitulation Problem, Ono invariants, $R$-equivalence, norm tori}

\thanks{The author is partially supported by Fondecyt grant
1080025}

\maketitle

\begin{abstract} We discuss the Capitulation Problem for N\'eron-Raynaud
class groups of tori over global fields $F$ and obtain
generalizations of the main results of \cite{CAmb}. We also show
that short exact sequences of $F$-tori induce long exact sequences
involving the corresponding N\'eron-Raynaud class groups. For
example, we show that the N\'eron-Raynaud class group of any
$F$-torus which is split by a metacyclic extension of $F$ can be
``resolved'' in terms of classical ideal class groups of global
fields.
\end{abstract}

\section{Introduction}

Given an algebraic torus $T$ over a number $F$ and an integral model
$\s H$ of $T$, one can define a class group $C(\s H)$ of $\s H$ by
extending in a natural way the well-known adelic definition of the
classical ideal class group of $F$. Several authors
\cite{K,O,PR,Sh2,V} have studied the class group of a particular
model of $T$, namely the so-called {\it standard model} of $T$, and
obtained interesting results for norm tori and their duals. The
corresponding proofs are rather involved due to the unwieldy nature
of the adelic definition mentioned above. Fortunately Ye.Nisnevich,
in his thesis \cite{Nis}, obtained a cohomological intepretation of
$C(\s H)$ which produces significant simplifications in the study of
these groups, as already demonstrated by M.Morishita in \cite{M} (we
should mention that this author studied the class group of a certain
model of the norm torus which differs from both the standard model
mentioned above and the connected N\'eron-Raynaud model considered
below). For the convenience of the reader, we have included in
Section 3 of this paper an overview of Nisnevich's construction. In
the case of the {\it connected N\'eron-Raynaud} model $\s H=\sto$ of
an $F$-torus $T$ (over any global field $F$), Nisnevich's
cohomological interpretation quickly leads to a {\it non-adelic}
description of the N\'eron-Raynaud class group $C(\sto)$ (see
\cite{mrl}, \S3, or Proposition 4.1 below) which generalizes the
well-known {\it ideal-theoretic} definition of the ideal class group
of a global field. This alternative description is superbly
well-suited for studying N\'eron-Raynaud class groups of {\it
arbitrary tori}. In this paper we discuss two problems related to
these groups. To explain them, we introduce the following notations.

Let $F$ be a global field and let $S$ be any nonempty finite set of
primes of $F$ containing the archimedean primes in the number field
case. Let $\sto$ denote the (fiberwise) identity component of the
N\'eron-Raynaud model of $T$ over $U=\spec O_{F,S}$ and let $C_{T,\e
F,\e S}=C(\sto)$ be the corresponding class group. The first problem
that we consider is a natural extension of the classical {\it
$S$-Capitulation Problem} for ideal class groups of number fields.
Namely, given a finite Galois extension $K/F$ of $F$ with Galois
group $G$, describe the kernel and cokernel of the
induced $S$-capitulation map $j_{\e T,\e K/F,\e S}\colon C_{\e T,\e
F,\e S}\ra C_{\e T,\e K,\e S_{K}}^{\e G}$, where $S_{K}$ denotes the
set of primes of $K$ lying above the primes of $S$. Let $\ut=\spec\s
O_{K,\e S_{K}}$, write $\tto$ for the identity component of the
N\'eron-Raynaud model of $T_{K}$ over $\ut$ and let
$$
H^{1}\ng\big(G,\tto\be(\ut)\big)^{\e\prime}=\krn\!\be\left[H^{1}\ng\big(G,\tto\be(\ut)\big)\ra H^{1}(G,T(K))\right],
$$
where the map involved is induced by the inclusion $\tto\be(\ut)\ra
T(K)$. Further,
for each prime $v\notin S$, let
$H^{1}(G_{w_{v}},\tto(\s O_{w_{v}}))^{\e\prime}$ be the analogous
group associated to $T_{K_{w_{v}}}$, where $w_{v}$ is a
previously-selected prime of $K$ lying above $v$, $\s O_{w_{v}}$ is
the ring of integers of the completion $K_{w_{v}}\!$ of $K$ at
$w_{v}$ and $G_{w_{v}}=\text{Gal}(K_{w_{v}}/F_{v})$. There exists a
canonical localization map
$$
\lambda_{S}\colon H^{1}\ng\big(G,\tto\be(\ut)\big)^{\e\prime}\ra
\displaystyle\bigoplus_{v\notin S} H^{1}(G_{w_{v}},\tto(\s
O_{w_{v}}))^{\e\prime}.
$$
Our first result is the following generalization of \cite{CAmb},
Theorem 2.4.

\begin{theorem} Assume that $S$ contains all primes of $F$ where $T$ has bad reduction. Then there exists a canonical exact sequence
$$\begin{array}{rcl}
0\ra\krn j_{\e T,\e K/F,\e S}\ra H^{1}\ng\big(G,\tto\lbe(\ut)\big)^{\e\prime}&\overset{\lambda_{S}}\longrightarrow&
\displaystyle\bigoplus_{v\notin S} H^{1}(G_{w_{v}},\tto(\s
O_{w_{v}}))^{\e\prime}\\\\
&\longrightarrow &\cok j_{\e T,\e K/F,\e S}^{\e\prime}\ra 0,
\end{array}
$$
where $j_{\e T,K/F,S}^{\e\prime}$ is a variant of $j_{\e T,K/F,S}$
defined in Section 4.
\end{theorem}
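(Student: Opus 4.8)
The plan is to derive the sequence from two short exact sequences of $G$-modules attached to $K$, together with their analogues over $F$, by combining the long exact $G$-cohomology sequences, Shapiro's lemma and the snake lemma. For a prime $w\notin S_{K}$ write $M_{w}=T(K_{w})/\tto(\s O_{w})$, set $D_{T,K,S_{K}}=\bigoplus_{w\notin S_{K}}M_{w}$ and, over $F$, $D_{T,F,S}=\bigoplus_{v\notin S}T(F_{v})/\sto(\s O_{v})$. By the ideal-theoretic description of Proposition 4.1, $C_{T,K,S_{K}}$ is the cokernel of the divisor map $T(K)\ra D_{T,K,S_{K}}$ whose kernel is $\tto(\ut)$, and likewise over $F$. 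Splitting each four-term sequence through the image $B_{K}$ (resp.\ $B_{F}$) of the divisor map yields
$$
0\ra\tto(\ut)\ra T(K)\ra B_{K}\ra 0,\qquad 0\ra B_{K}\ra D_{T,K,S_{K}}\ra C_{T,K,S_{K}}\ra 0,
$$
together with the $F$-analogues; here $G$ permutes the primes $w\mid v$, so $D_{T,K,S_{K}}$ is a sum of modules induced from the decomposition groups $G_{w_{v}}$.

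First I would record the base-change input that drives the argument and that is exactly where the hypothesis on $S$ enters: for $v\notin S$ the torus $T$ has good reduction, so $\sto$ is a torus over $\s O_{v}$ whose base change to $\s O_{w_{v}}$ is again the N\'eron model $\tto$; faithfully flat descent then gives $\tto(\s O_{w_{v}})^{G_{w_{v}}}=\sto(\s O_{v})$ and, globally, $\tto(\ut)^{G}=\sto(U)$. Taking $G$-cohomology of the first sequence above and using $T(K)^{G}=T(F)$, the image of the connecting map $B_{K}^{G}\ra H^{1}(G,\tto(\ut))$ equals the kernel of $H^{1}(G,\tto(\ut))\ra H^{1}(G,T(K))$, i.e.\ $H^{1}(G,\tto(\ut))^{\prime}$; since $B_{F}\ra B_{K}^{G}$ has the same image as $T(F)\ra B_{K}^{G}$, this produces a natural isomorphism $\cok\!\big(B_{F}\ra B_{K}^{G}\big)\cong H^{1}(G,\tto(\ut))^{\prime}$. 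The identical computation over each completion, combined with Shapiro's lemma for the induced module $\bigoplus_{w\mid v}M_{w}$, identifies $\cok\!\big(D_{T,F,S}\ra D_{T,K,S_{K}}^{\,G}\big)$ with $\bigoplus_{v\notin S}H^{1}(G_{w_{v}},\tto(\s O_{w_{v}}))^{\prime}$ and shows that this middle map is injective.

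Next I would apply the snake lemma to the morphism from $0\ra B_{F}\ra D_{T,F,S}\ra C_{T,F,S}\ra 0$ into the $G$-invariants of the second sequence over $K$, after restricting the capitulation map to the image of $D_{T,K,S_{K}}^{\,G}$ in $C_{T,K,S_{K}}^{\,G}$ (this restriction is what defines the variant $j^{\prime}_{T,K/F,S}$). As the middle vertical map is injective with cokernel computed above, the six-term snake sequence collapses to
$$
0\ra\krn j_{T,K/F,S}\ra\cok\!\big(B_{F}\ra B_{K}^{G}\big)\ra\cok\!\big(D_{T,F,S}\ra D_{T,K,S_{K}}^{\,G}\big)\ra\cok j^{\prime}_{T,K/F,S}\ra 0,
$$
which becomes the asserted sequence once the two cokernels are rewritten through the isomorphisms of the previous step.

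The hard part will be the final verification: that the map induced on cokernels by the inclusion $B_{K}^{G}\hookrightarrow D_{T,K,S_{K}}^{\,G}$ is carried, under those isomorphisms, to the localization map $\lambda_{S}$ of the statement. This is a compatibility among three connecting homomorphisms — the global one for $\tto(\ut)$, the local ones for $\tto(\s O_{w_{v}})$, and the Shapiro identification — and I expect it to follow from the naturality of $\partial$ with respect to the morphism of short exact sequences obtained by restricting $0\ra\tto(\ut)\ra T(K)\ra B_{K}\ra 0$ to $G_{w_{v}}$ and localizing at $w_{v}$. Carrying out this diagram chase carefully, rather than any single cohomology computation, is where the real work lies.
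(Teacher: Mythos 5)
Your proposal is correct, and its skeleton coincides with the paper's: split the four-term sequence of Proposition 4.1 (over $K$ and its analogue over $F$) into two short exact sequences, apply the snake lemma to the $F$-versus-$K$ localization diagram (your diagram is precisely the paper's diagram \eqref{r} after the identifications $B_F=T(F)/\sto(U)$, $B_K^G=(T(K)/\tto(\ut))^G$, $D_{T,F,S}=\bigoplus_{v\notin S}\Phi_v(k(v))$, $D_{T,K,S_K}^{\,G}=\bigoplus_{v\notin S}\Phi_{w_v}(k(w_v))^{G(w_v)}$), identify the two cokernels with $H^{1}\ng\big(G,\tto\lbe(\ut)\big)^{\e\prime}$ and $\bigoplus_{v\notin S}H^{1}(G_{w_v},\tto(\s O_{w_v}))^{\e\prime}$ via connecting homomorphisms, and check by naturality of $\partial$ that the induced middle map is $\lambda_S$ (the paper also leaves this last compatibility as a definition-chase). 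Where you genuinely diverge is in generality and in the two auxiliary facts. The paper deduces the theorem from Proposition 4.5, proved under the weaker hypothesis that $T_K$ has multiplicative reduction over $\ut$; there the middle map need not be injective — its kernel is computed by Lemma 4.2 as $\bigoplus_{v\in B\setminus S}H^{1}(I_{w_v},T(K_{w_v}^{\le\rm{nr}}))^{G_{k(v)}}$ via Hilbert 90 — and $\bigoplus_v H^{1}(G_{w_v},\tto(\s O_{w_v}))^{\e\prime}$ differs from $\bigoplus_v\cok\delta_v$ by the groups $\overline{C}_{T,F_v}$ (Proposition 4.4); moreover $\tto(\ut)^G=\sto(U)$ is obtained only after specializing to $S\supset B$. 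You instead invoke $S\supset B$ from the outset and obtain both the injectivity of the middle map and $\tto(\ut)^G=\sto(U)$ directly from the base-change isomorphism $\sto\times_U\ut\cong\tto$ (valid since $\sto$ is a torus over $U$ under good reduction, even when $\ut/U$ is ramified) together with $\s O_{w_v}^{G_{w_v}}=\s O_v$ — a clean, correct shortcut replacing Lemma 4.2. Two small points you should make explicit: (i) for $w\notin S_K$ the map $T(K_w)\to\Phi_w(k(w))$ is surjective because $T_K$ has multiplicative reduction there, so your $M_w=T(K_w)/\tto(\s O_w)$ agrees with the component groups appearing in Proposition 4.1 (alternatively, derive your four-term sequences straight from the adelic definition of $C(\tto)$); and (ii) the right-hand vertical map in your snake diagram — the map induced on cokernels — must be identified with the Nisnevich-theoretically defined capitulation map \eqref{cap}, a compatibility the paper imports from \cite{mrl} rather than something true by definition. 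Your route buys a shorter, more self-contained proof of the stated theorem; the paper's buys the intermediate generality (Proposition 4.5 and the groups $\Sha^{\e 1}_{S}$, $\Tha^{1}_{S}$, $\overline{C}_{T,F_v}$) that it exploits elsewhere, e.g.\ in Proposition 4.9 and Theorem 6.7.
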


Regarding the cokernel of $j_{\e T,K/F,S}$, we obtain the following generalization of \cite{CAmb}, Theorem 3.3.

\begin{theorem} Assume that $T$ splits over $K$ and let $R$ be the set of primes of $F$ which ramify in $K$. Then there exists a canonical exact sequence
$$
\begin{array}{rcl}
0&\ra&\cok\widetilde{\jmath}_{\e T,\e K/F,\e S}^{\,\,\prime}\ra
\cok j_{\e T,\e K/F,\e S}\ra H^{2}\ng\big(G,\tto\be(\ut)\big)\ra B_{S}(G,T)\\\\
&\ra& H^{1}\big(G, C_{\e T,\e K,\e S_{K}}\big)\ra H^{3}\ng\big(G,\tto\be(\ut)\big),
\end{array}
$$
where $\widetilde{\jmath}_{T,\e K/F,\e S}^{\,\,\prime}$ is a variant
of $j_{\e T,\e K/F,\e S}$ defined in Section 5 and the group
$B_{S}(G,T)$ fits into an exact sequence
$$
0\ra\Sha^{2}(T)\ra B_{S}(G,T)\ra\displaystyle\bigoplus_{v\e\in\e
S\e\cup\le R}H^{2}(G_{w_{v}},T(K_{w_{v}}))^{\e\prime}
\ra\widehat{H}^{\e 0}(G,X)^{D}.
$$
Here $\!\Sha^{2}(T)$ and $X$ are, respectively, the second
Tate-Shafarevich group of $T$ and the $G$-module of characters of
$T$, $\widehat{H}^{\e 0}(G,X)^{D}$ is the Pontryagin dual of the
0-th Tate cohomology group of $X$ and the groups
$H^{2}(G_{w_{v}},T(K_{w_{v}}))^{\e\prime}$ are certain variants of
$H^{2}(G_{w_{v}},T(K_{w_{v}}))$ defined in Section 5.
\end{theorem}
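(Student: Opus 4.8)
The plan is to derive the first (long) exact sequence from the fundamental ideal-theoretic description of the class group (Proposition 4.1), and then to analyze the term $B_{S}(G,T)$ by means of local and global duality for tori. Write $\s D_{K}=\bigoplus_{w\notin S_{K}}T(K_{w})/\tto(\s O_{w})$ for the group of N\'eron--Raynaud ``ideals'' over $\ut$, so that Proposition 4.1 furnishes a four-term exact sequence of $G$-modules
$$
0\ra\tto(\ut)\ra T(K)\ra\s D_{K}\ra C_{\e T,\e K,\e S_{K}}\ra 0,
$$
together with its $F$-analogue (with trivial $G$-action). First I would split this into the two short exact sequences
$$
0\ra\tto(\ut)\ra T(K)\ra \s P_{K}\ra 0,\qquad 0\ra\s P_{K}\ra\s D_{K}\ra C_{\e T,\e K,\e S_{K}}\ra 0,
$$
where $\s P_{K}=\img\!\big(T(K)\ra\s D_{K}\big)$ is the subgroup of principal ideals.

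Next I would take the long exact sequences in $G$-cohomology of both short exact sequences and splice them. Since $\s D_{K}=\bigoplus_{v\notin S}\mathrm{Ind}^{G}_{G_{w_{v}}}\!\big(T(K_{w_{v}})/\tto(\s O_{w_{v}})\big)$, Shapiro's lemma identifies $H^{i}(G,\s D_{K})$ with $\bigoplus_{v\notin S}H^{i}(G_{w_{v}},T(K_{w_{v}})/\tto(\s O_{w_{v}}))$, and for the good-reduction primes $v\notin S\cup R$ these local terms are cohomologically trivial (the extension $K_{w_{v}}/F_{v}$ is unramified and $\tto(\s O_{w_{v}})$ is the full connected N\'eron model), so only the primes in $S\cup R$ survive. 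Comparing the sequence for $K$ with the $F$-level sequence via the base-change maps — using $T(F)=T(K)^{G}$ and the identification of $\sto(U)$ inside $\tto(\ut)^{G}$ — a diagram chase of snake-lemma type exhibits $\cok j_{\e T,\e K/F,\e S}$ as an extension whose kernel is $\cok\widetilde{\jmath}_{\e T,\e K/F,\e S}^{\,\,\prime}$ and whose further quotient injects, via the connecting map of the first short exact sequence, into $H^{2}(G,\tto(\ut))$. Continuing the long exact sequence past this point produces the tail $H^{2}(G,\tto(\ut))\ra B_{S}(G,T)\ra H^{1}(G,C_{\e T,\e K,\e S_{K}})\ra H^{3}(G,\tto(\ut))$, in which $B_{S}(G,T)$ emerges as a subquotient of $H^{2}(G,\s P_{K})$, hence is controlled through the two short exact sequences by $H^{2}(G,\s D_{K})=\bigoplus_{v\in S\cup R}H^{2}(G_{w_{v}},T(K_{w_{v}})/\tto(\s O_{w_{v}}))$ together with the global obstruction in $H^{2}(G,T(K))$ that defines the primed local variants.

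Finally I would establish the second exact sequence, describing $B_{S}(G,T)$ itself. Because $T$ splits over $K$, one has $T(K)\cong\mathrm{Hom}(X,K^{*})$ as a $G$-module, and I would apply Tate--Nakayama duality locally: cup product with the local fundamental class identifies $\widehat{H}^{2}(G_{w_{v}},T(K_{w_{v}}))$ with $\widehat{H}^{\e 0}(G_{w_{v}},X)^{\vee}$, which feeds the summands $H^{2}(G_{w_{v}},T(K_{w_{v}}))^{\e\prime}$. Globally, the resulting map to $\widehat{H}^{\e 0}(G,X)^{D}$ is the dual of the localization map on $\widehat{H}^{\e 0}(G,X)$, and a global duality argument, combining Tate's theorem for the $F$-torus $T$ with the local identifications, shows that its kernel is $\Sha^{2}(T)$; this yields the displayed four-term sequence.

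The main obstacle, I expect, is the bookkeeping in the duality step: reconciling the \emph{finite-level} Galois cohomology groups $H^{2}(G_{w_{v}},T(K_{w_{v}}))$ of the Shapiro decomposition with the \emph{full} local cohomology used in global duality, and carefully handling the ramified and bad-reduction primes $v\in S\cup R$, where $\tto(\s O_{w_{v}})$ is not cohomologically trivial and the component group of the N\'eron model intervenes. Defining the primed variants so that all connecting homomorphisms are compatible across the diagram, and verifying exactness at the junction where the first sequence meets $B_{S}(G,T)$, is where the real work lies.
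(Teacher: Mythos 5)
Your overall skeleton is the paper's: split the four-term sequence of Proposition 4.1 into the two short exact sequences (your $\s P_{K}$ is the paper's $T(K)/\tto\lbe(\ut)$), splice their $G$-cohomology via a Shapiro identification and an Iwasawa-style chase (this is Lemma 5.1 plus Proposition 5.2, which cites Iwasawa's argument verbatim), and close with duality. But one step in your middle paragraph is false as stated, and it is load-bearing. You claim that for $v\notin S\cup R$ the local terms $H^{i}(G_{w_{v}},T(K_{w_{v}})/\tto(\s O_{w_{v}}))$ are cohomologically trivial. Since $K$ splits $T$, the quotient $T(K_{w_{v}})/\tto(\s O_{w_{v}})\cong\Phi_{w_{v}}\be(k(w_{v}))$ is a free abelian group with \emph{trivial} $G_{w_{v}}$-action; hence $H^{1}(G_{w_{v}},-)=0$ (the true input, which gives $H^{1}(G,\s D_{K})=0$ and makes the splice work), but $H^{2}(G_{w_{v}},\bz^{d})\cong\mathrm{Hom}(G_{w_{v}},\bq/\bz)^{d}$, which is nonzero whenever $v$ does not split completely in $K$ --- unramifiedness is irrelevant here. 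Consequently your identification $H^{2}(G,\s D_{K})=\bigoplus_{v\in S\cup R}H^{2}(G_{w_{v}},T(K_{w_{v}})/\tto(\s O_{w_{v}}))$ fails on both counts: every $v\notin S$ contributes (and $\s D_{K}$ has no summands at $v\in S$ at all, so these terms could never arise from it). The full sum $\bigoplus_{v\notin S}H^{2}(G_{w_{v}},\Phi_{w_{v}}\be(k(w_{v})))$ is precisely the target in the paper's definition \eqref{bs} of $B_{S}(G,T)$; truncating it to $S\cup R$ replaces $B_{S}(G,T)$ by a strictly larger kernel in general, so your first exact sequence would terminate at the wrong group.

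The restriction to $S\cup R$ in the theorem enters at a different point and with different local groups than the ones you wrote. The paper first embeds $B_{S}(G,T)$ into a comparison over \emph{all} places: using Sansuc's lemma $\Sha^{2}(G,T(K))=\Sha^{2}(T)$, the Oesterl\'e/Poitou--Tate injectivity of $H^{2}(F,T)/\Sha^{2}(T)$ into $\bigoplus_{v}H^{2}(F_{v},T)$, and the Brauer-type sequence $0\ra H^{2}(K,T)\ra\bigoplus_{w}H^{2}(K_{w},T)\ra X^{D}\ra 0$ over $K$, one gets $0\ra H^{2}(G,T(K))/\Sha^{2}(T)\ra\bigoplus_{\text{all }v}H^{2}(G_{w_{v}},T(K_{w_{v}}))\ra\widehat{H}^{\e 0}(G,X)^{D}$, the last map being the dual of the norm $X_{G}\ra X^{G}$. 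One then computes the kernel of the map $\pi$ whose components are $\pi_{v}$ for $v\notin S$ and $0$ for $v\in S$: for $v\notin S$ the valuation sequence gives $\krn\pi_{v}=H^{2}(G_{w_{v}},\tto(\s O_{w_{v}}))$, and it is this \emph{integral} cohomology that vanishes at unramified primes (cohomological triviality of $\tto(\s O_{w_{v}})$ in unramified extensions, as for units), while at $v\in S$ the whole $H^{2}(G_{w_{v}},T(K_{w_{v}}))$ survives because no component-group condition is imposed there. This is exactly the content of the primed groups \eqref{h2w}: full $H^{2}$ at $v\in S$, integral $H^{2}$ at $v\in R\setminus S$ --- not the quotient cohomology you specified. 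Your closing paragraph shows you know the correct vanishing fact (you say $\tto(\s O_{w_{v}})$ fails to be cohomologically trivial only at $v\in S\cup R$), but as written your argument attributes the vanishing to the wrong term of the local short exact sequence, and the $v\in S$ summands cannot be produced within your $\s D_{K}$-only framework; the Tate--Nakayama endgame you sketch is fine in spirit (it is what the paper's dual-of-norm map implements), but it needs this corrected bookkeeping to land on the stated sequence.
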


The second question addressed in this paper is the following one:
given a short exact sequence of $F$-tori
\begin{equation}\label{uno}
0\ra T_{1}\ra T_{2}\ra T_{3}\ra 0,
\end{equation}
how are the N\'eron-Raynaud $S$-class groups $C_{T_{i},\e F,\e S}$
related? We can answer this question for certain types of sequences
\eqref{uno}\footnote{\e Our methods should lead to an answer to the above question (for any sequence \eqref{uno}) provided $S$ contains
all primes of $F$ which are wildly ramified in the minimal splitting field of $T_{1}$.}. In order to explain our results, we first recall that
an $F$-torus is called {\it quasi-trivial} if its module of
characters is a permutation $G_{\lbe F}$-module, where $G_{\lbe F}$
is the absolute Galois group of $F$. It is called {\it invertible}
if it is a direct factor of a quasi-trivial torus. Let $T$ be an
$F$-torus and assume that $T$ admits an {\it invertible resolution},
i.e., there exists an exact sequence
\begin{equation}\label{dos}
0\ra T_{1}\ra Q\ra T\ra 0,
\end{equation}
where $Q$ is quasi-trivial and $T_{1}$ is invertible. This is the
case, for example, if $T$ is split by a {\it metacyclic} extension
of $F$. Then the following holds.

\begin{theorem} The exact sequence \eqref{dos} induces an exact sequence of finitely
generated abelian groups
$$
0\ra\sto_{1}(U)\ra\sqo(U)\ra\sto(U)\ra C_{\e T_{1},\le F,S}\ra C_{\e
Q,\e F,S}\ra C_{\e T,\e F,S}\ra 0,
$$
where $\s T_{1}, \s Q$ and $\s T$ denote, respectively, the N\'eron-Raynaud models of $T_{1}, Q$ and $T$ over $U$.
\end{theorem}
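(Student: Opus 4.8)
The plan is to recognize every group in the sequence as a Nisnevich cohomology group over $U=\spec\s O_{F,S}$ and to obtain the whole sequence as a single long exact cohomology sequence. By Nisnevich's cohomological interpretation of the class group (Section 3 and Proposition 4.1), for each of the smooth $U$-group schemes $\sto_{1},\sqo,\sto$ one has
\[
H^{0}_{\mathrm{Nis}}(U,-)=(-)(U),\qquad H^{1}_{\mathrm{Nis}}(U,-)=C_{\bullet,\e F,\e S}.
\]
Moreover $U$ is a regular scheme of Krull dimension $1$, so its Nisnevich cohomological dimension is at most $1$ and $H^{i}_{\mathrm{Nis}}(U,\s F)=0$ for every Nisnevich sheaf $\s F$ and every $i\ge 2$. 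Thus, once we know that
\begin{equation}\label{nrmodseq}
0\ra\sto_{1}\ra\sqo\ra\sto\ra 0
\end{equation}
is exact as a sequence of sheaves on the small Nisnevich site of $U$, the long exact cohomology sequence attached to \eqref{nrmodseq} reads
\[
0\ra\sto_{1}(U)\ra\sqo(U)\ra\sto(U)\ra H^{1}_{\mathrm{Nis}}(U,\sto_{1})\ra H^{1}_{\mathrm{Nis}}(U,\sqo)\ra H^{1}_{\mathrm{Nis}}(U,\sto)\ra H^{2}_{\mathrm{Nis}}(U,\sto_{1}),
\]
and the final group vanishes by the dimension bound. Rewriting the three $H^{1}$'s as $C_{\e T_{1},\le F,S}$, $C_{\e Q,\e F,S}$ and $C_{\e T,\e F,S}$ gives exactly the asserted sequence. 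Finite generation of the terms is then automatic, the groups of $U$-points being finitely generated (Dirichlet's unit theorem and its function-field analogue) and the class groups finite.

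Hence the entire content is the exactness of \eqref{nrmodseq}. Left exactness is the formal part: the inclusion $T_{1}\hookrightarrow Q$ is a closed immersion of tori, and by the N\'eron mapping property it extends to a morphism $\sto_{1}\ra\sqo$ (landing in the identity component, since $\sto_{1}$ has connected fibres); checking on fibres, together with the identification $T_{1}=\krn(Q\ra T)$ on generic fibres and the separatedness and flatness of the models, then identifies $\sto_{1}$ with the kernel sheaf of $\sqo\ra\sto$.

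The surjectivity of $\sqo\ra\sto$ as Nisnevich sheaves is the crux, and it is exactly here that the hypotheses on $T_{1}$ and $Q$ are used. I would proceed in two stages. First, I would show that $\sqo\ra\sto$ is a \emph{smooth surjective} morphism of $U$-schemes: it is smooth because $\sqo$ and $\sto$ are smooth over $U$ with smooth kernel $\sto_{1}$; surjectivity on the generic fibre is the surjection $Q\ra T$, while surjectivity on the identity components of the finitely many special fibres of bad reduction is controlled by the quasi-triviality of $Q$—which makes $\sqo$ explicit as a product of Weil restrictions of $\bg_{m}$-models—together with the invertibility of $T_{1}$. Second, granting smoothness and surjectivity, I would verify that $\sqo\ra\sto$ is an epimorphism of Nisnevich sheaves by lifting sections through every Henselian local ring $\s O$ of $U$: over a closed point the fibre is a torsor under the fibrewise-connected group $\sto_{1}$ over a finite field, hence trivial by Lang's theorem, so it acquires a rational point, and this point lifts to a section over $\s O$ by Hensel's lemma along the smooth morphism. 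At the generic point $F$ the analogous lifting uses $H^{1}(F,T_{1})=0$, which holds because $T_{1}$, being invertible, is a direct factor of a quasi-trivial torus and so inherits the vanishing of $H^{1}$ from Hilbert 90 and Shapiro's lemma.

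The step I expect to be hardest is the surjectivity of $\sqo\ra\sto$ on the identity components of the bad special fibres; everything else is either formal (left exactness), a dimension count (truncation of the long exact sequence), or a standard lifting argument (Lang plus Hensel). The invertibility of $T_{1}$ and the quasi-triviality of $Q$ enter precisely at this point and in securing the generic surjectivity, which explains why the theorem is stated for an invertible resolution rather than for an arbitrary short exact sequence of tori.
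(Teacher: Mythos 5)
Your global framework is sound and does differ in form from the paper's: $H^{1}_{\mathrm{Nis}}(U,\sto)=C_{\e T,\e F,\e S}$ is indeed Theorem 3.5, and $H^{2}_{\mathrm{Nis}}(U,\sto_{1})=0$ since the Nisnevich cohomological dimension of a noetherian scheme is bounded by its Krull dimension, so the long exact sequence would deliver the theorem \emph{if} $0\ra\sto_{1}\ra\sqo\ra\sto\ra 0$ were exact as Nisnevich sheaves. But that exactness is precisely the entire content of the theorem, and your argument for it has a genuine gap at both ends. The kernel identification is \emph{not} ``the formal part'': the sheaf kernel of $\sqo\ra\sto$ has generic fibre $T_{1}$, but its special fibres need not agree with those of $\sto_{1}$. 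For the norm-one resolution $0\ra R^{(1)}_{K/F}(\bg_{m,K})\ra R_{K/F}(\bg_{m,K})\overset{N}\ra\bg_{m,F}\ra 0$ with $K/F$ quadratic ramified at $v$ (odd residue characteristic), the map of identity components on the special fibre is $(x,y)\mapsto x^{2}$ on $\bg_{m}\ltimes\bg_{a}\ra \bg_{m}$, whose kernel is $\mu_{2}\times\bg_{a}$: disconnected, and not the fibre of the N\'eron model of $T_{1}$. Consequently your step two collapses there — the fibre of $\sqo\ra\sto$ over a point of $\sto(\s O_{v}^{\e h})$ is a torsor under this disconnected group, Lang's theorem does not apply, and indeed $N\colon\s O_{K,w}^{*}\ra\s O_{v}^{*}$ is not surjective, so the stalk map fails to be onto. (This $T_{1}$ is not invertible, consistent with the theorem, but it shows your ``formal'' left exactness and your Lang--Hensel lifting cannot go through without invoking invertibility in a substantive way; likewise ``smooth because source, target and kernel are smooth'' is a non sequitur.)

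What closes the gap under the actual hypotheses — and what your sketch waves at with ``controlled by quasi-triviality of $Q$ together with invertibility of $T_{1}$'' — is the component-group computation the paper imports from Brahm. Since $T_{1}$ is invertible, $R^{1}j_{*}T_{1}=0$ for the smooth topology on $U$ (Lemma 4.8(b)), whence by [B], Theorem 5.3.1, the sequence $0\ra\Phi_{v}(T_{1})\big(\e\overline{k(v)}\e\big)\ra\Phi_{v}(Q)\big(\e\overline{k(v)}\e\big)\ra\Phi_{v}(T)\big(\e\overline{k(v)}\e\big)\ra 0$ is exact for every $v\notin S$, and it remains exact on $k(v)$-points because $H^{1}(k(v),\Phi_{v}(T_{1}))=0$ (Lemma 4.8(d)); from this, plus surjectivity of $T_{1}(F_{v}^{\e h})\ra\Phi_{v}(T_{1})(k(v))$ (where Lang and Hensel legitimately enter, the kernel now really being $\sto_{1}$ with connected fibres), your stalkwise exactness does follow, so your cohomological route can be completed and is then equivalent in substance to the paper's. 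The paper itself avoids sheaf exactness altogether: it uses $H^{1}(F,T_{1})=0$ (Lemma 4.8(a)) to get $0\ra T_{1}(F)\ra Q(F)\ra T(F)\ra 0$, pairs it with the exact component sequences above, and applies the snake lemma to the resulting diagram of $\vartheta_{_{\be S}}$-maps, reading off the kernels and cokernels via Proposition 4.1. In short: your skeleton is fine, but the theorem's real content — exactness of the component groups at the bad primes via $R^{1}j_{*}T_{1}=0$ — is missing from your argument, and it is exactly the step you flagged as hardest.
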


See Section 6 for applications of the above result to duals of norm
tori.

\smallskip

Certainly, the class of $F$-tori which admit an invertible
resolution is a rather restricted one. But {\it any} $F$-torus $T$
admits a {\it flasque resolution}
\begin{equation}\label{tres}
0\ra T_{1}\ra Q\ra T\ra 0,\bibliographystyle{mrl}
\end{equation}
where $Q$ is quasi-trivial and $T_{1}$ is flasque. We recall that an
$F$-torus $T_{1}$ with module of characters $X_{1}$ is called {\it
flasque} if $\widehat{H}^{-1}(H,X_{1})=0$ for every open subgroup
$H$ of $G_{\lbe F}$. Then there exists a naturally-defined variant
$C_{\e T,\e F,\e S}^{R}$ of $C_{\e T,\e F,\e S}$, which we call the
{\it $R$-equivalence class group of $T$}, so that the following result
holds.

\begin{theorem} Assume that $S$ contains all primes of $F$ where $T_{1}$ in
\eqref{tres} has bad reduction. Then \eqref{tres} induces an exact sequence
$$
0\ra\sto_{1}(U)\ra\sqo(U)\ra R\,\sto(U)\ra C_{\e T_{1},\e F,\e S}\ra
C_{\e Q,\e F,\e S}\ra C_{\e T,\e F,\e S}^{R}\ra 0,
$$
where $R\,\sto(U)$ denotes the subgroup of $\sto(U)$ of all elements which are $R$-equivalent to 1.
\end{theorem}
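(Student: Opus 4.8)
The plan is to realize each N\'eron-Raynaud class group as a Nisnevich cohomology group and then to feed the flasque resolution \eqref{tres} into the resulting long exact sequence. Recall from Nisnevich's interpretation that $\sto(U)=H^{0}_{\mathrm{Nis}}(U,\sto)$ and $C_{\e T,\e F,\e S}=H^{1}_{\mathrm{Nis}}(U,\sto)$, and similarly for $T_{1}$ and $Q$. Since $U=\spec\s O_{F,S}$ has Krull dimension $1$, its Nisnevich cohomological dimension is $1$, so $H^{i}_{\mathrm{Nis}}(U,-)=0$ for every $i\ge 2$. Hence any short exact sequence of Nisnevich sheaves on $U$ already yields a six-term exact sequence of the desired shape, and the whole problem reduces to extracting the correct short exact sequence of sheaves out of \eqref{tres}.

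First I would produce the maps of identity components: the morphism $Q\ra T$ induces, via the N\'eron mapping property applied to the smooth $U$-scheme $\sqo$ together with the connectedness of its fibres, a homomorphism $\sqo\ra\sto$, and likewise one gets $\sto_{1}\ra\sqo$. The next step is to analyze the complex $\sto_{1}\ra\sqo\ra\sto$ of Nisnevich sheaves. Using the hypothesis that $S$ contains all primes of bad reduction of $T_{1}$, the model $\sto_{1}$ is a torus over all of $U$, and I would identify it with $\krn(\sqo\ra\sto)$: this kernel is a subgroup scheme of the smooth scheme $\sqo$ with generic fibre $T_{1}$, and good reduction forces it to coincide with the connected N\'eron-Raynaud model $\sto_{1}$. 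This establishes exactness on the left and in the middle.

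The crux is that $\sqo\ra\sto$ is \emph{not} surjective, and the plan is to identify its image and cokernel with $R$-equivalence data. Let $\s I=\img(\sqo\ra\sto)$ and let $\s C$ be its cokernel. At a closed point $v\notin S$ all three tori have good reduction, so $H^{1}(\s O_{v}^{\e h},\sto_{1})$ reduces to the cohomology of a torus over the finite residue field $\kappa(v)$ and therefore vanishes by Lang's theorem; hence $\sqo\ra\sto$ is surjective on every such stalk and $\s C$ is supported at the generic point. There the stalk of $\s C$ is $\cok\big[\e Q(F)\ra T(F)\e\big]\cong H^{1}(F,T_{1})$, which, since $T_{1}$ is flasque and $H^{1}(F,Q)=0$, is canonically $T(F)/R$ by the Colliot-Th\'el\`ene--Sansuc description of $R$-equivalence. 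Consequently $\img\big[\e Q(F)\ra T(F)\e\big]=R\e T(F)$, the subgroup of $R$-trivial points, and taking sections over $U$ and intersecting with $\sto(U)$ gives $\s I(U)=R\sto(U)$. This is the step I expect to be the main obstacle, since it is an integral refinement of the Colliot-Th\'el\`ene--Sansuc theorem: one must control $R$-equivalence simultaneously at the generic point and at the primes of $S$, where $T$ itself may have bad reduction, rather than merely over the field $F$.

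With these identifications the short exact sequence $0\ra\sto_{1}\ra\sqo\ra\s I\ra 0$ produces, using $H^{i}_{\mathrm{Nis}}(U,-)=0$ for $i\ge 2$, the exact sequence
$$0\ra\sto_{1}(U)\ra\sqo(U)\ra\s I(U)\ra H^{1}_{\mathrm{Nis}}(U,\sto_{1})\ra H^{1}_{\mathrm{Nis}}(U,\sqo)\ra H^{1}_{\mathrm{Nis}}(U,\s I)\ra 0,$$
which, after substituting $\s I(U)=R\sto(U)$, $H^{1}_{\mathrm{Nis}}(U,\sto_{1})=C_{\e T_{1},\e F,\e S}$ and $H^{1}_{\mathrm{Nis}}(U,\sqo)=C_{\e Q,\e F,\e S}$, is exactly the asserted sequence once one sets $C_{\e T,\e F,\e S}^{R}=H^{1}_{\mathrm{Nis}}(U,\s I)$. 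To finish I would check that this cohomology group coincides with the independently-defined $R$-equivalence class group: the second short exact sequence $0\ra\s I\ra\sto\ra\s C\ra 0$ relates $H^{1}_{\mathrm{Nis}}(U,\s I)$ to $C_{\e T,\e F,\e S}=H^{1}_{\mathrm{Nis}}(U,\sto)$ with error terms given by the cohomology of the generic-point sheaf $\s C$, i.e. precisely the $R$-equivalence correction, confirming that $H^{1}_{\mathrm{Nis}}(U,\s I)$ is $C_{\e T,\e F,\e S}^{R}$.
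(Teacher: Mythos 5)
Your sheaf-theoretic architecture (image sheaf $\s I$ plus $\mathrm{cd}_{\mathrm{Nis}}(U)\le 1$) is genuinely different from the paper, which never forms a Nisnevich image sheaf: the paper proves Theorem 7.4 by a snake lemma on the diagram whose top row is the Colliot-Th\'el\`ene--Sansuc sequence \eqref{rat points}, $0\ra T_{1}(F)\ra Q(F)\ra R\,T(F)\ra 0$, and whose bottom row is the sum over $v\notin S$ of Brahm's component-group sequences, reading off the terms via the non-adelic presentations (Proposition 4.1 and \eqref{R seq}); the present statement is then Corollary 7.5, obtained by Lemma 7.3. Your proposal, however, has a genuine gap at its key local step. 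The assertion that ``at a closed point $v\notin S$ all three tori have good reduction'' is false: the hypothesis concerns only $T_{1}$, and both $Q$ (a product of Weil restrictions along extensions that may ramify outside $S$) and $T$ can have bad reduction at $v\notin S$. More seriously, the appeal to Lang's theorem is circular: the vanishing of $H^{1}_{\text{\'et}}(\s O_{v}^{\e h},\sto_{1})$ kills the obstruction to lifting a point of $\sto(\s O_{v}^{\e h})$ only \emph{after} one knows that $0\ra\sto_{1}\ra\sqo\ra\sto\ra 0$ is exact as smooth (equivalently \'etale) sheaves over $\s O_{v}$ --- i.e. that $\sqo\ra\sto$ is a sheaf epimorphism with kernel exactly $\sto_{1}$ --- and that is precisely the nontrivial content. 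The paper extracts it from \cite{B}, Theorem 5.3.1, applicable because good reduction of $T_{1}$ at $v\notin S$ forces $R^{1}j_{*}T_{1}=0$ (the stalks are $H^{1}(F_{v}^{\e\rm{nr}},T_{1})$, which vanish by Hilbert 90 once $T_{1}$ splits over $F_{v}^{\e\rm{nr}}$). Your proof supplies no substitute: neither the surjectivity of $\sqo\ra\sto$ on special fibres nor the injectivity of $\Phi_{v}(T_{1})\ra\Phi_{v}(Q)$ (needed for your identification $\krn(\sqo\ra\sto)=\sto_{1}$) is automatic.

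A second gap: flasqueness of $T_{1}$ enters your argument only through CTS at the generic point, but it is indispensable at the closed points as well. In the paper it appears as $H^{1}(k(v),X_{1}^{\vee})=0$ for $v\notin B_{1}$ (inflation plus \cite{CTS}, Lemma 1), which makes $\Phi_{v}(Q)(k(v))\ra\Phi_{v}(T)(k(v))$ surjective at \emph{every} $v\notin S$; in your setup the same vanishing, in the guise $H^{1}(F_{v}^{\e h},T_{1})\cong H^{1}(G(w_{v}),X_{1}^{\vee})=0$ at good-reduction primes, is exactly what is needed to identify the cokernel sheaf $\s C$ with the pushforward from the generic point of $T(F)/R$ and hence to justify the final identification $H^{1}_{\mathrm{Nis}}(U,\s I)\cong C_{\e T,\e F,\e S}^{R}$ with the group defined by \eqref{R seq} --- a step you only gesture at. Without flasqueness this fails: for the norm-one torus of an unramified quadratic extension (good reduction!) one has $H^{1}(F_{v}^{\e h},T_{1})\cong\bz/2\neq 0$, so since your argument nowhere distinguishes flasque from non-flasque $T_{1}$ at the places $v\notin S$, it cannot be complete as written. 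Finally, you misplace the ``main obstacle'': the identification $\img[\e Q(F)\ra T(F)\e]=R\,T(F)$ is not an integral refinement to be established but is verbatim the cited Colliot-Th\'el\`ene--Sansuc theorem (the paper's \eqref{rat points}), and the primes of $S$ are irrelevant throughout, since all the class groups involved only see $v\notin S$.
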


In fact, we obtain a more general result. See Theorem 7.4.

This paper discusses other issues that seem interesting,
e.g., the computation of {\it Ono invariants} for certain types of tori (see Sections 6 and 8).

\smallskip

This paper is divided into 8 Sections. Section 2 is preliminary. In
Section 3 we present Ye.Nisnevich's cohomological interpretation of
the class set of an affine group scheme. We hope that this Section will be useful to other
researchers in this area. Sections 4 and 5 contain
the proofs of Theorems 1.1 and 1.2. Sections 6 and 7, which to a large extent can be
read independently of Sections 4 and 5, contain
the proofs of Theorems 1.3 and 1.4. Finally, Section 8 discusses
class groups of norm tori.

\smallskip

The problems discussed in this paper admit interesting analogues for
abelian varieties. We hope to discuss them in future publications.
For an indication of the type of problems to be considered, see
\cite{leiden-talk}.

\section*{Acknowledgements}
I thank B.Edixhoven for his help with the proof of Lemma 2.1. I also
thank him, P.Gille, D.Lorenzini and X.Xarles for helpful
comments. Most of this paper was written during visits to Leiden
University and University of Bordeaux in the winter of 2009. I
thank both institutions for their hospitality.

\section{Preliminaries}

Let $F$ be a global field, i.e. $F$ is a finite extension of
${\mathbb Q}$ or is finitely generated and of transcendence degree 1
over a finite field of constants. Let $S$ be any nonempty finite set
of primes of $F$ which contains the archimedean primes in the number
field case. Let $K/F$ be a finite Galois extension of $F$ with
Galois group $G$. We fix a separable algebraic closure $\fb$ of $F$
containing $K$ and write $G_{\be K}$ for
$\text{Gal}\big(\e\fb/K\big)$. Further, we will write $S_{\be K}$
for the set of primes of $K$ lying above the primes in $S$. The ring
of $S$ (resp., $S_{\be K}$)-integers of $F$ (resp., $K$) will be
denoted by $\s O_{F,\e S}$ (resp., $\s O_{K,\e S_{K}}$). Sometimes
it is convenient to assume that $K/F$ is only separable, in which
case we keep these notations. Let $U=\spec\s O_{F,\e S}$ and
$\ut=\spec{\s O}_{K,\e S_{K}}$. For every $v\notin S$, the
henselization (resp., completion) of the local ring of $U$ at $v$
will be denoted by ${\s O}^{\e h}_{v}$ (resp., ${\s O}_{v}$), and
$F_{v}^{\e h}$ and $F_{v}$ will denote the corresponding fields of
fractions. We will write $k(v)$ for the residue field of $U$ at $v$
and $i_{v}\colon\spec k(v)\ra U$ for the corresponding closed
immersion. For any prime $v$ of $F$, we fix once and for all a prime
$w_{v}$ of $K$ lying above $v$ and write
$G_{w_{v}}=\text{Gal}(K_{w_{v}}/F_{v})$ for the decomposition group
of $w_{v}$ in $K/F$. The inertia subgroup of $G_{w_{v}}$ will be
denoted by $I_{w_{v}}$. Let $\overline{w}_{v}$ be a fixed prime of
$\fb$ lying above $w_{v}$. Then the completion of $\fb$ at
$\overline{w}_{v}$, $\fb_{\be\overline{w}_{v}}$, is a separable
algebraic closure of $F_{v}$ containing $K_{w_{v}}$. We set
$I_{\e\overline{w}_{v}}=\text{Gal}\big(\e\fb_{\be\overline{w}_{v}}/
K_{w_{v}}^{\e\text{nr}}\e\big)$ and
$I_{\e\overline{v}}=\text{Gal}\big(\e\fb_{\be\overline{w}_{v}}/
F_{v}^{\e\text{nr}}\e\big)$, where $K_{w_{v}}^{\e\text{nr}}$ (resp.,
$F_{v}^{\e\text{nr}}$) is the maximal unramified extension of
$K_{w_{v}}$ (resp., $F_{v}$) inside $\fb_{\be\overline{w}_{v}}$.
Clearly, $I_{\e\overline{w}_{v}}$ is a subgroup of
$I_{\e\overline{v}}$ and there exist canonical isomorphisms
$I_{\e\overline{v}}/I_{\e\overline{w}_{v}}=\text{Gal}(K_{w_{v}}^
{\e\text{nr}}/F_{v}^{\e\text{nr}})=I_{w_{v}}$. We will write
$G(w_{v})$ for $\text{Gal}(k(w_{v})/k(v))$, which will be identified
with $G_{w_{v}}/I_{w_{v}}$. Further, we will write $e_{v}$ for the
ramification index of $v$ in $K$, i.e., $e_{v}=[K_{w_{v}}^
{\e\text{nr}}\colon F_{v}^{\e\text{nr}}]$. If $K/F$ is a finite
separable extension, $v$ is a non-archimedean prime of $F$ and $w$
is a prime of $K$ lying above $v$, then we will write $e_{w}$ for
$[K_{w}^ {\e\text{nr}}\colon F_{v}^{\e\text{nr}}]$.

Let $T$ be an $F$-torus and let $\tt$ be the N\'eron-Raynaud model
of $T_{K}$ over $\ut$. Then $\tt$ is a smooth and separated
$\ut$-group scheme which is locally of finite type and represents
the sheaf $\widetilde{\jmath}_{\e *}T_{K}$ on the small smooth site
over $\ut$. See \cite{BLR}, Proposition 10.1.6, p.292. Now, if $\s
T$ denotes the N\'eron-Raynaud model of $T$ over $U$, there exists a
canonical base-change map $\s T\times_{U}\ut\ra\tt$ which is an
isomorphism if $\ut/U$ is \'etale \cite{BLR}, \S 7.2, Theorem 1(i),
p.176. Let $\tto$ (resp., $\s T^{\le\circ}$) denote the (fiberwise)
identity component of $\tt$ (resp., $\s T$). Then $\tto$ is an
affine smooth $U$-group scheme of finite type (see \cite{KM},
Proposition 3, p.18, and \cite{BLR}, p.290, line 6). For each prime
$w\notin S_{K}$, let $\Phi_{w}=i_{w}^{*}\be\big(\le\tt/\tto\le\big)$
be the \'etale $k(w)$-sheaf of connected components of $\tt$ at $w$.
Then $\Phi_{w}\be\big(\e\overline{k(w)}\e\big)$ is a finitely
generated $G_{\lbe k(w)}$-module \cite{X}, Proposition 2.18. For
each prime $v\notin S$, we will write $\Phi_{v}$ for the \'etale
sheaf of connected components of $\s T$ at $v$. When necessary to
avoid confusion, we will write $\Phi_{v}(T)$ for $\Phi_{v}$ and
$\Phi_{w}(T_{K})$ for $\Phi_{w}$. The {\it N\'eron-Raynaud $S$-class
group of $T$} is the finite group $C_{\e T,\e F,\e S}=C\big(\e\s
T^{\e\circ}\lbe\big)$ (see \cite{C}, \S 1.3, for the finiteness
assertion). Its cardinality will be denoted by $h_{\e T,F,S}$.

We recall from \cite{mrl} the definition of the capitulation map on
$C_{\e T,\e F,\e S}$. Let $f\colon\ut\ra U$ be the canonical map.
The adjoint morphism $\sto\ra f_{*}f^{*}\e\sto$ induces a map $H^{\e
1}_{\text{Nis}}(U,\sto)\ra H^{\e 1}_{\text{Nis}}\be\big(\e
U,f_{*}f^{*}\e\sto\e\big)^{G}$. On the other hand, $H^{\e
1}_{\text{Nis}}\be\big(\e U, f_{*}f^{*}\e\sto\e\big)=H^{\e
1}_{\text{Nis}}\be\big(\e\ut,f^{*}\sto\big)$ by the exactness of
$f_{*}$ for the Nisnevich topology. Further, the base-change
morphism $f^{*}\sto=\sto\times_{U}\e\ut\ra\tto$ induces a morphism
$H^{\e 1}_{\text{Nis}}\be\big(\e\ut,f^{*}\sto\big)\ra H^{\e
1}_{\text{Nis}}\be\big(\ut,\tto\big)$. Thus, there exists a
canonical map
$$
H^{\e 1}_{\text{Nis}}(U,\sto)\ra H^{\e
1}_{\text{Nis}}\be\big(\ut,\tto\big)^{G}.
$$
By Theorem 3.5 below, the preceding map corresponds to a map
\begin{equation}\label{cap}
j_{\e T,\e K/F,\e S}\colon C_{\e T,\e F,\e S}\ra C_{\e T,\e K,\e
S_{K}}^{\e G}
\end{equation}
which is called the $S$-{\it capitulation map}. We will now define a
{\it norm} map $N_{\e T,\e K/F,\e S}\colon C_{\e T,\e K,\e S_{K}}\ra
C_{\e T,\e F,\e S}$, assuming only that $K/F$ is a finite separable
extension.

If $S^{\e\prime}\ra S$ is a morphism of schemes and $\s G$ is an
$S^{\e\prime}$-group scheme, we will write $R_{\e S^{\e\prime}\be/S}(\s
G\e)$ for the Weil restriction of $\s G$ with respect to the given morphism $S^{\e\prime}\ra S$.
See \cite{BLR}, \S7.6, for basic information regarding this construction.

\begin{lemma}  Let $A$ be a commutative ring with unit, let $B$ be a locally
free\footnote{As pointed out by B.Edixhoven, this hypothesis
is needed to ensure the representability of certain functors
considered in \cite{ELL}, proof of Theorem 1.} finitely generated
$A$-module and let $\s G$ be a $B$-group scheme. Assume that $\s G$
is smooth and has geometrically connected fibers. Then $R_{B/A}(\s
G\e)$ has geometrically connected fibers.
\end{lemma}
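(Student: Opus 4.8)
The plan is to reduce the statement, by base change, to the case where the base is an algebraically closed field, and then to analyse the Weil restriction along an infinitesimal thickening by deformation theory. First I would record two standard facts about Weil restriction along the finite locally free morphism $\spec B\ra\spec A$: it commutes with arbitrary base change on $A$, i.e. $R_{B/A}(\s G)\times_{A}A'=R_{\,B\otimes_{A}A'/A'}\big(\s G\times_{B}(B\otimes_{A}A')\big)$, and it carries a smooth $B$-group scheme to a smooth $A'$-group scheme (see \cite{BLR}, \S7.6). Since geometric connectedness of the fibers of an $A$-scheme may be tested after base change to an algebraic closure of each residue field, and since smoothness and geometric connectedness of the fibers of $\s G$ are preserved by such a base change, I am reduced to proving the following: if $k$ is an algebraically closed field, $B$ is a finite-dimensional $k$-algebra and $\s G$ is a smooth $B$-group scheme with geometrically connected fibers, then $R_{B/k}(\s G)$ is connected. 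Here $R_{B/k}(\s G)$ is a smooth $k$-group scheme, so connectedness and geometric connectedness coincide for it.

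Next I would treat the reduced case. Since $k$ is algebraically closed, the reduced algebra $B_{\mathrm{red}}$ is isomorphic to a finite product $k^{\,r}$, so $\spec B_{\mathrm{red}}$ is a disjoint union of $r$ copies of $\spec k$ and $\s G_{\mathrm{red}}:=\s G\times_{B}B_{\mathrm{red}}$ is the disjoint union of the corresponding fibers $\s G_{0,1},\dots,\s G_{0,r}$, each a smooth geometrically connected $k$-group scheme. As Weil restriction turns this disjoint union into a product, $R_{B_{\mathrm{red}}/k}(\s G_{\mathrm{red}})=\prod_{i=1}^{r}\s G_{0,i}$, and a finite product of connected finite-type schemes over the algebraically closed field $k$ is connected (each factor is geometrically connected, so each projection is an open surjection with connected base and connected fibers). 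Hence $R_{B_{\mathrm{red}}/k}(\s G_{\mathrm{red}})$ is connected.

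Then I would climb from $B_{\mathrm{red}}$ to $B$ along the nilradical filtration. Filtering the surjection $B\twoheadrightarrow B_{\mathrm{red}}$ by the powers of the nilradical $\s N$ yields a chain of square-zero surjections $B=C_{m}\twoheadrightarrow C_{m-1}\twoheadrightarrow\dots\twoheadrightarrow C_{0}=B_{\mathrm{red}}$, where $C_{i}=B/\s N^{\,i+1}$; set $\s G^{(i)}=\s G\times_{B}C_{i}$, so that all the Weil restrictions $R_{C_{i}/k}(\s G^{(i)})$ occurring are representable by smooth $k$-group schemes. On points, the restriction map $R_{C_{i}/k}(\s G^{(i)})\ra R_{C_{i-1}/k}(\s G^{(i-1)})$ sends a section over $\spec(C_{i}\otimes_{k}A')$ to its restriction along the square-zero thickening $\spec(C_{i-1}\otimes_{k}A')\hookrightarrow\spec(C_{i}\otimes_{k}A')$. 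Because $\s G$ is smooth over $B$, the infinitesimal lifting criterion shows that this map is smooth and surjective and that each of its nonempty fibers is a torsor under the vector group $\operatorname{Lie}(\s G_{0})\otimes_{k}I_{i}$, where $I_{i}=\s N^{\,i}\!/\s N^{\,i+1}$ is a finite-dimensional $k$-vector space; over any field $\ell$ such a torsor is trivial, since $H^{1}(\ell,\bg_{a})=0$, hence is an affine space and in particular connected.

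Finally I would assemble the steps using the elementary topological fact that an open surjective map with connected base and connected nonempty fibers has connected total space. Applying this to each $R_{C_{i}/k}(\s G^{(i)})\ra R_{C_{i-1}/k}(\s G^{(i-1)})$ (which is open, being smooth) and inducting on $i$, with the reduced case of the second paragraph as the base of the induction, I conclude that $R_{B/k}(\s G)=R_{C_{m}/k}(\s G^{(m)})$ is connected, as desired. I expect the main obstacle to be precisely the square-zero step: identifying the fibers of the restriction map as vector-group torsors, which is where the smoothness hypothesis on $\s G$ is essential and where deformation theory must be invoked.
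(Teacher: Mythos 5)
Your proof is correct and follows essentially the same route as the paper's: reduce by base change to an algebraically closed field $k$ and a finite $k$-algebra $B$, split off the reduced part (the paper decomposes $B$ into Artinian local factors with residue field $k$, you pass to $B_{\mathrm{red}}\cong k^{\e r}$, which amounts to the same thing), and then climb the nilpotent filtration using the infinitesimal lifting criterion, your tower of restriction maps $R_{\e C_{i}/k}(\s G^{(i)})\ra R_{\e C_{i-1}/k}(\s G^{(i-1)})$ being just another packaging of the paper's subfunctor filtration $F^{\e i}G$. The only real difference is that where the paper cites \cite{ELL} for the connectedness of $F^{\e 1}G$, you prove that step directly by identifying the fibers of each square-zero transition map as torsors under vector groups --- which is in substance the very argument of \cite{ELL}.
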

\begin{proof} (Cf. \cite{ELL}, proof of Theorem 1) Let
$\overline{x}\colon\spec k\ra\spec A$ be a geometric point of $A$, where
$k$ is an algebraically closed field. Then $R_{B/A}(\s
G\e)_{\overline{x}}=R_{B_{k}/k}(\s G_{B_{k}}\e)$, where
$B_{k}=B\otimes_{A}k$. Since $B_{k}$ is finite over $k$, it is
isomorphic to a finite product $\prod B_{j}$ of Artinian local
$k$-algebras $B_{j}$ with residue field $k$, whence $R_{B_{k}/k}(\s
G_{B_{k}}\e)=\prod_{\e j}R_{B_{j}/k}(\s G_{B_{j}}\e)$. We are thus
reduced to the case where $A=k$ and $B$ is an Artinian local
$k$-algebra with residue field $k$. Set $G=R_{B/k}(\s G\e)$,
$n=\text{dim}_{k}(B)$ and let $\mathfrak{m}$ be the maximal ideal of
$B$ (so that $\mathfrak{m}^{n}=0$). There exists a filtration of $G$
by subfunctors $G\supset F^{\e 1}G\supset\dots\supset F^{\e n}G=0$,
where, for any $k$-algebra $C$,
$$
F^{\e i}G(C)=\krn[\e\s G(C\otimes_{k}B)\ra\s
G(C\otimes_{k}(B/\mathfrak{m}^{i}))].
$$
The maps $\s G(C\otimes_{k}B)\ra\s
G(C\otimes_{k}(B/\mathfrak{m}^{i}))$ are surjective for all $i$ and
$C$ by the smoothness of $\s G$, which implies that there exists a
canonical exact sequence of $k$-group schemes
$$
0\ra F^{\e 1}G\ra G\ra\s G_{k}\ra 0.
$$
The group $F^{\e 1}G$ is connected by \cite{ELL}, proof of Theorem
1, and the lemma follows.
\end{proof}

Now the norm map $K^{*}\ra F^{*}$ induces a map $R_{K/F}(T_{K})\ra
T$ which, by the N\'eron mapping property, extends uniquely to a map
of N\'eron-Raynaud models $R_{\e\ut/U}\ng\big(\e\tt\e\big)\ra\s T$
(that $R_{\,\ut/U}\ng\big(\e\tt\e\big)$ is the N\'eron-Raynaud model
of $R_{K/F}(T_{K})$ is shown in \cite{BLR}, Proposition 7.6.6,
p.198). The latter map induces a map $\s N\colon
R_{\,\ut/U}\be\big(\tt\big)^{\circ}\ra\sto$. Now the above lemma and
the argument in \cite{NX}, proof of Lemma 3.1, show that
$R_{\,\ut/U}\be\big(\tt\big)^{\circ}=R_{\,\ut/U}\be\big(\tto\big)$,
whence $\s N$ is a map $R_{\,\ut/U}\be\big(\tto\big)\ra\sto$. On the
other hand, since $R_{\,\ut/U}\be\big(\tto\big)=f_{*}\tto$ and $f_{*}$ is exact for the Nisnevich topology, we
have
$$
H^{\e 1}_{\text{Nis}}\!\left(\e
U,R_{\,\ut/U}\be\big(\tto\big)\right)=H^{\e
1}_{\text{Nis}}\be\big(\e U,f_{*}\tto\big)=H^{\e
1}_{\text{Nis}}\be\big(\e \ut,\tto\big).
$$
Thus $\s N$ induces a map $H^{\e 1}_{\text{Nis}}\be\big(\e
\ut,\tto\big)\ra H^{\e 1}_{\text{Nis}}\be\big(\e U,\sto\big)$ which,
by Theorem 3.5 below, corresponds to a map
\begin{equation}\label{norm map}
N_{\e T,\e K/F,\e S}\colon C_{\e T,\e K,\e S_{K}}\ra C_{\e T,\e F,\e
S}.
\end{equation}
This is the desired norm map.

If $T=G_{m,F}$, we will drop $T$ from some of the above notations,
e.g., $C_{\e F,\e S}$ is $C_{\e T,\e F,\e S}$ when $T=G_{m,F}$.

\begin{remark} The preceding argument shows that,
if $T$ is an $F$-torus and $K/F$ is any finite separable extension,
then the N\'eron-Raynaud $S$-class group of the $F$-torus $R_{\e
K/F}(T_{K})$ is canonically isomorphic to $C_{\e T,\e K,\e S_{K}}$.
\end{remark}

For any finite group $G$, any $G$-module $M$ and any integer $i$,
$\widehat{H}^{i}(G,M)$ will denote the $i$-th Tate $G$-cohomology
group of $M$. Recall that a finite group $G$ is called {\it
metacyclic} if every Sylow subgroup of $G$ is cyclic. The classical
H\"older-Burnside-Zassenhaus theorem asserts that a finite group $G$
is metacyclic if and only if $G$ is a semi-direct product of two
cyclic groups whose orders are relatively prime. See \cite{R},
Theorem 10.26, p.246, and \cite{Z}, Theorem V.3.11, p.175.

Let $X={\rm{Hom}}\!\left(\e T\!\left(\e\fb\e\right),\fc\right)$ be
the $G_{\lbe F}$-module of characters of $T$ and let
$X^{\lbe\vee}=\text{Hom}_{\e\bz}\be(X,\bz)$ be its linear dual.
Recall that a free and finitely generated $G_{\lbe F}$-module $X$ is
said to be a {\it permutation} $G_{\lbe F}$-module if it admits a
$\bz$-basis which is permuted by $G_{\lbe F}$. Then
$X\simeq\oplus_{\e i=1}^{\e m}\e\bz[G_{\lbe F}/H_{i}]$ for some open
subgroups $H_{i}$ of $G_{\lbe F}$ and, consequently, the $F$-torus
$T$ associated to $X$ is isomorphic to $\prod_{\e i=1}^{\e m}\!
R_{L_{i}/F}(\bg_{m,\e L_{i}})$ for some finite separable
subextensions $L_{i}/F$ of $\fb/F$. Such a torus is called {\it
quasi-trivial}. A $G_{\lbe F}$-module $X$ as above is called {\it
invertible} if it is isomorphic to a direct factor of a permutation
$G_{\lbe F}$-module\footnote{The reason for this terminology can be
found in \cite{CTS}, p.178. These modules are also called {\it
permutation projective} in the literature.}. We call an $F$-torus
{\it invertible} if its module of characters is an invertible
$G_{\lbe F}$-module or, equivalently, if it is isomorphic to a
direct factor of a quasi-trivial $F$-torus. The $G_{\lbe F}$-module
$X$ is called {\it flasque} (resp., {\it coflasque}) if
$\widehat{H}^{-1}(H,X)=0$ (resp., $\widehat{H}^{1}(H,X)=0$) for
every open subgroup $H$ of $G_{\lbe F}$. Every invertible $G_{\lbe
F}$-module is both flasque and coflasque. Further, the duality
$X\mapsto X^{\lbe\vee}$ transforms flasque modules into coflasque
ones and preserves invertible modules. An $F$-torus is called {\it
flasque} (resp., {\it coflasque}) if its character module is a
flasque (resp., coflasque) $G_{\lbe F}$-module. The usefulness of
these notions stems from the fact that every $F$-torus $T$ admits
both a flasque resolution and a coflasque resolution, i.e., there
exist a flasque torus $T_{1}$, a coflasque torus $T_{2}$,
quasi-trivial tori $Q_{1}$ and $Q_{2}$ and exact sequences of
$F$-tori
\begin{equation}\label{f}
0\ra T_{1}\ra Q_{1}\ra T\ra 0
\end{equation}
and
\begin{equation}\label{c}
0\ra T\ra Q_{2}\ra T_{2}\ra 0.
\end{equation}
Further, the tori $T_{i}$ and $Q_{i}$ above are determined by $T$ up
to multiplication by a quasi-trivial torus. See \cite{CTS}, Lemma 5,
p.181 or \cite{V}, p.50. By work of S.Endo and T.Miyata \cite{EM},
any $F$-torus which is split by a {\it metacyclic} extension of $F$
admits, in fact, an {\it invertible resolution}, i.e., $T_{1}$ in
\eqref{f} may be taken to be {\it invertible}. See \cite{CTS},
Proposition 2, p.184, or \cite{V}, pp.54-55.

\smallskip

For any abelian group $M$, we will write
$M^{D}=\text{Hom}_{\e\bz}\lbe(M,\bq/\bz)$ for its Pontryagin dual.
If $M$ is finite, $[M]$ will denote the order of $M$.

Finally, recall the group
$$
\Sha^{2}(T)=\krn\!\left[\e H^{2}(F,T)\ra\prod_{\text{all
$v$}}H^{2}(F_{v},T)\right].
$$

\section{Nisnevich cohomology and class sets}

For the convenience of the reader, we present in this Section
Ye.Nisnevich's cohomological interpretation of the class set of an
affine group scheme. Our main reference is \cite{Nis}, Chapter I.

\medskip

If $Y$ is any scheme, $Y_{0}$ will denote the set of closed points
of $Y$. Let $Y$ be a noetherian scheme.

\begin{definition} Let $f\colon Z\ra Y$ be an
\'etale morphism. A point $y\in Y$ is said to {\it split completely}
in $Z$ if there exists a point $z\in f^{-1}(y)$ such that the
induced map of residue fields $k(y)\ra k(z)$ is an isomorphism.
\end{definition}

We denote the set of points of $Y$ which split completely in $Z/Y$
by $\text{cs}(Z/Y)$.

\begin{example} Assume that the set $S$ introduced in the previous Section
contains all prime ideals of $F$ which ramify in $K$. Then a closed
point $v$ of $U$ splits completely in $\ut$ (as defined above) if
and only if the prime ideal of $F$ corresponding to $v$ splits
completely in $K$.
\end{example}

The {\it Nisnevich topology} on $Y$, denoted by $Y_{\text{Nis}}$, is
the Grothendieck topology (\cite{T}, 1.2.1, p.24) whose underlying
category is the category $\text{\'Et}/Y$ of all \'etale $Y$-schemes
and whose set of coverings is defined as follows. A {\it covering}
is a finite family $\{Z_{\e i}\ra Z\}_{\e i\in I}$ of \'etale
morphisms such that $Z=\bigcup_{\e _{i\in I}}\text{cs}(Z_{\e i}/Z)$,
i.e., every point of $Z$ splits completely in some $Z_{i}$. If $Z$
is the spectrum of a Dedekind ring, then any Nisnevich covering
$\{Z_{\e i}\ra Z\}_{\e i\in I}$ must contain an open immersion
$Z_{\e i_{0}}\hookrightarrow Z$ (since the generic point of $Z$
splits completely in some $Z_{\e i}$). We denote this topology by
$Y_{\text{Nis}}$. Clearly, $Y_{\text{Nis}}$ is finer than
$Y_{\text{Zar}}$ but coarser than $Y_{\text{\'et}}$. We will write
$\widetilde{Y}_{\text{Nis}}$ for the category of sheaves of groups
on $Y_{\text{Nis}}$. As is well-known (see, e.g., \cite{T}, Example
6.3.1, p.126), localization in the \'etale topology leads to the
strict henzelization $\s O_{\e Y,\e y}^{\le sh}$ of the local ring
$\s O_{\e Y,\e y}$. By contrast, localization in the Nisnevich
topology leads to the henselization $\s O_{\e Y,\e y}^{\e h}$ of $\s
O_{\e Y,\e y}$ \cite{Nis2}, 1.9.1, p.259. Now, any morphism of
schemes $f\colon Z\ra Y$ induces a morphism of topologies
$f_{\e\text{Nis}}\colon Y_{\text{Nis}}\ra Z_{\e\text{Nis}}$ and
therefore direct and inverse image functors
$$
\begin{array}{rcl}
f_{*}\colon
\widetilde{Z}_{\e\text{Nis}}\ra\widetilde{Y}_{\text{Nis}}\\
f^{*}\colon
\widetilde{Y}_{\text{Nis}}\ra\widetilde{Z}_{\e\text{Nis}}.
\end{array}
$$
As is the case for the \'etale topology, $f_{*}$ is {\it exact} if
$f$ is a finite morphism. This may be proved by imitating the proof
of the corresponding fact for the \'etale topology \cite{T}, proof
of Theorem 6.4.2, p.129, using \cite{Nis2}, Lemma 1.18.1, p.268, in
place of \cite{T}, Lemma 6.2.3, p.124.

If $V$ is a nonempty open subscheme of $U$, the ring of $V$-integral
adeles of $U$ is defined by
$$
{\mathbb A}_{\e U}(V)=\displaystyle\prod_{v\e\in\e U\setminus
V}F_{v}\times\displaystyle\prod_{v\e\in\e V_{0}}{\s O}_{v}.
$$
Now define a partial ordering on the family of all nonempty open
subschemes of $U$ by setting $V\leq V^{\e\prime}$ if
$V^{\e\prime}\subset V$. Then, for every pair $V,V^{\e\prime}$ of
nonempty open subschemes of $U$ such that $V\leq V^{\e\prime}$,
there exists a canonical map ${\mathbb A}_{\e U}(V)\ra {\mathbb
A}_{\e U}(V^{\e\prime})$. The ring of adeles of $U$ is by definition
$$
{\mathbb A}_{\e U}=\varinjlim_{V}\e{\mathbb A}_{\e
U}(V).\bibliographystyle{mrl}
$$
Let $\s H$ be a generically smooth $U$-group scheme of {\it finite
type}. If $U^{\e\prime}=\spec\s O^{\e\prime}$ is an affine \'etale
$U$-scheme with fraction field $F^{\e\prime}$, we let
$U^{\e\prime}_{*}$ denote the normalization of $U$ in
$F^{\e\prime}$. Define a sheaf $\widehat{\s H}$ on $U_{\text{Nis}}$
by
$$
\widehat{\s H}(U^{\e\prime}):=\s H({\mathbb A}_{\e
U^{\e\prime}_{\lbe
*}}(U^{\e\prime}))=\displaystyle\prod_{v^{\e\prime}\e\in\e
U^{\e\prime}_{\lbe *}\e\setminus\e U^{\e\prime}}\s H(\e
F^{\e\prime}_{\be
v^{\e\prime}}\e)\times\displaystyle\prod_{v^{\e\prime}\e\in\e
U^{\e\prime}_{0}}\s H({\s O}^{\e\prime}_{ v^{\e\prime}}).
$$
Then $\widehat{\s H}(U^{\e\prime})=\prod_{\, v\in U_{0}}\s H(\e{\s
O}_{v}\otimes_{\e\s O_{F,\e S}}\s O^{\e\prime}\e)$, which yields the
following alternative description of $\widehat{\s H}$:
\begin{equation}\label{n}
\widehat{\s H}=\displaystyle\prod_{v\in U_{0}}(j_{v})_{*}\e
j_{v}^{*}\,\widehat{\s H},
\end{equation}
where, for each $v\in U_{0}$, $j_{v}\colon\spec{\s O}_{v}\ra U$ is
the canonical morphism.

\begin{lemma} $H_{{\rm{Nis}}}^{1}\big(U,\widehat{\s H}\e\big)=0$.
\end{lemma}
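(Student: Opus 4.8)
The key structural fact is the product decomposition \eqref{n}, namely $\widehat{\s H}=\prod_{v\in U_{0}}(j_{v})_{*}\e j_{v}^{*}\,\widehat{\s H}$, which expresses $\widehat{\s H}$ as a product of sheaves each supported (via the pushforward $(j_{v})_{*}$) at a single closed point $v$. The plan is to reduce the vanishing of $H^{1}_{\text{Nis}}\big(U,\widehat{\s H}\e\big)$ to a purely local computation at each $v$ and then to show that each local contribution vanishes because localization in the Nisnevich topology produces \emph{henselian} local rings.

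First I would use the compatibility of sheaf cohomology with products: since cohomology commutes with (possibly infinite) products in the relevant sense, the decomposition \eqref{n} yields an identification
$$
H^{1}_{\text{Nis}}\big(U,\widehat{\s H}\e\big)=\prod_{v\in U_{0}}H^{1}_{\text{Nis}}\big(U,(j_{v})_{*}\e j_{v}^{*}\,\widehat{\s H}\e\big).
$$
It therefore suffices to show that each factor on the right vanishes. For this I would invoke a Leray-type argument for the morphism $j_{v}\colon\spec{\s O}_{v}\ra U$: because $j_{v}$ is (pro-)\'etale onto a henselian trait and $(j_{v})_{*}$ is well-behaved for the Nisnevich topology, one expects $(j_{v})_{*}$ to be exact here (its higher direct images vanishing), so that
$$
H^{1}_{\text{Nis}}\big(U,(j_{v})_{*}\e j_{v}^{*}\,\widehat{\s H}\e\big)=H^{1}_{\text{Nis}}\big(\spec{\s O}_{v},\,j_{v}^{*}\,\widehat{\s H}\e\big).
$$
The problem is thus reduced to computing a first Nisnevich cohomology group over the spectrum of the henselian (or complete) local ring ${\s O}_{v}$.

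The heart of the argument is the local vanishing over $\spec{\s O}_{v}$. As recalled in the excerpt, localization in the Nisnevich topology leads to the henselization $\s O^{\e h}_{\e Y,\e y}$ rather than the strict henselization. Concretely, the local rings of $\spec{\s O}_{v}$ in the Nisnevich topology are already henselian local rings, so $\spec{\s O}_{v}$ behaves like a ``Nisnevich-local'' scheme: every Nisnevich covering of it admits a section (equivalently, contains an open immersion splitting completely at the closed point, exactly as noted for Dedekind spectra in the discussion of $Y_{\text{Nis}}$). Over such a base, $H^{1}_{\text{Nis}}$ of a smooth group scheme should vanish, since $H^{1}_{\text{Nis}}$ classifies Nisnevich-locally trivial torsors, and over a henselian local base every such torsor is globally trivial. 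This is the Nisnevich analogue of the vanishing of \'etale $H^{1}$ over a strictly henselian local ring, but with the decisive simplification that one does \emph{not} need the residue field to be separably closed, only that the section condition of the Nisnevich topology be met at the (single) closed point.

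I expect the main obstacle to be justifying these two reductions rigorously rather than the final local vanishing, which is essentially built into the definition of the topology. Specifically, the exactness of $(j_{v})_{*}$ and the commutation of cohomology with the (infinite) product over $v\in U_{0}$ require care: one must check that the product in \eqref{n} is compatible with the sheaf structure and with taking derived functors, and that the henselization ${\s O}^{\e h}_{v}$ (via which Nisnevich localization proceeds) interacts correctly with the pushforward along $j_{v}$. The cleanest route is probably to argue directly from the covering axiom: given a Nisnevich $1$-cocycle for $\widehat{\s H}$, refine to a covering of $U$ in which each member splits completely, use the Dedekind property to extract an open immersion, and show the cocycle is then a coboundary by exploiting the fact that $\widehat{\s H}$ is an adelic product whose local factors $\s H({\s O}_{v})$ and $\s H(F_{v})$ carry no Nisnevich $H^{1}$ over henselian local bases. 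I would lean on Nisnevich's own treatment in \cite{Nis}, Chapter I, for the precise form of these statements.
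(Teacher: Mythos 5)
Your plan follows the paper's proof almost verbatim: reduce via the decomposition \eqref{n} to the factors $(j_{v})_{*}\e j_{v}^{*}\,\widehat{\s H}$, then obtain the local vanishing from the fact that $\spec{\s O}_{v}$, being a complete local scheme, admits no nontrivial Nisnevich coverings. The one step where your justification would fail as stated is the middle reduction: you assert the \emph{equality} $H^{1}_{\text{Nis}}\big(U,(j_{v})_{*}\e j_{v}^{*}\,\widehat{\s H}\e\big)=H^{1}_{\text{Nis}}\big({\s O}_{v},j_{v}^{*}\,\widehat{\s H}\e\big)$ on the grounds that $j_{v}$ is ``(pro-)\'etale onto a henselian trait'' with exact pushforward. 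But $j_{v}\colon\spec{\s O}_{v}\ra U$ is the spectrum of a \emph{completion}: it is flat but neither finite nor \'etale, so the exactness criterion the paper records for $f_{*}$ (finite morphisms) does not apply, and there is no evident Leray degeneration to invoke. The paper sidesteps this entirely by using only the \emph{injection} $H^{1}_{\text{Nis}}\big(U,(j_{v})_{*}\e j_{v}^{*}\,\widehat{\s H}\e\big)\hookrightarrow H^{1}_{\text{Nis}}\big({\s O}_{v},j_{v}^{*}\,\widehat{\s H}\e\big)$, which holds for an \emph{arbitrary} morphism and an arbitrary sheaf of groups (the standard edge-map/torsor fact that $H^{1}(Y,f_{*}\s G\e)$ injects into $H^{1}(X,\s G\e)$), and injectivity into a vanishing group is all that is needed. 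With that single substitution --- injection in place of equality --- your argument becomes exactly the paper's; your residual worries about commuting $H^{1}$ with the product in \eqref{n} are at the same level of (implicit) rigor as the paper's own ``it suffices to check each factor,'' and your fallback suggestion of a direct \v{C}ech argument from the covering axiom is sound but unnecessary.
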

\begin{proof} By \eqref{n}, it suffices to check that
$H_{{\rm{Nis}}}^{1}\big(U,(j_{v})_{*}\e j_{v}^{*}\,\widehat{\s
H}\e\big)=0$ for every $v\in U_{0}$. The pointed set
$H_{{\rm{Nis}}}^{1}\big(U,(j_{v})_{*}\e j_{v}^{*}\,\widehat{\s
H}\e\big)$ injects into $H_{{\rm{Nis}}}^{1}\big({\s
O}_{v},j_{v}^{*}\,\widehat{\s H}\e\big)$, which is trivial since a
complete local scheme does not have nontrivial coverings in the
Nisnevich topology.
\end{proof}

The diagonal embedding $\s O^{\e\prime}\ra {\mathbb A}_{\e
U^{\e\prime}_{*}}(U^{\e\prime})$ (where $U^{\e\prime}=\spec\s
O^{\e\prime}$ is any affine \'etale $U$-scheme) induces an injection
of Nisnevich sheaves $\s H\hookrightarrow \widehat{\s H}$. Let $\s
Q=\widehat{\s H}/\s H$ be the corresponding quotient sheaf. The
stalk of $\s Q$ at the generic point of $U$ is the group $\s
H({\mathbb A}_{\e U})/\s H(F)$.

\begin{lemma} The canonical map $\s Q(U)\ra\s H({\mathbb A}_{\e
U})/\s H(F)$ is a bijection.
\end{lemma}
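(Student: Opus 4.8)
The plan is to recognize the map of the statement as the restriction of a global section of $\s Q$ to its stalk at the generic point $\eta=\spec F$ of $U$, and then to establish injectivity and surjectivity separately. Since sheafification does not change stalks, $\s Q_{\eta}=\widehat{\s H}_{\eta}/\s H_{\eta}=\s H(\mathbb A_{U})/\s H(F)$, exactly as recorded just before the statement, so the canonical map in question is this restriction. Two structural facts are used throughout. First, because $U$ is the spectrum of a Dedekind ring, every Nisnevich covering of $U$ contains an open immersion $V\hookrightarrow U$; hence any global section of $\s Q$ is represented, on a suitable nonempty open $V$, by an element of $\widehat{\s H}(V)=\s H(\mathbb A_{U}(V))$. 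Second, by \eqref{n} the sheaf $\widehat{\s H}$ is the product over the closed points $v$ of the pushforwards $(j_{v})_{*}\e j_{v}^{*}\widehat{\s H}$, so that the germ of $\widehat{\s H}$ at a closed point $v$ carries the integral factor $\s H(\s O_{v})$ at $v$ and only generic factors at the remaining places. Smoothness and separatedness of $\s H$ will supply the local integrality statements needed below.

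For injectivity, suppose $s\in\s Q(U)$ has trivial germ at $\eta$; then $s$ already vanishes on some nonempty open $V$, and since a section is determined by its germs it remains only to see that the germ of $s$ at each of the finitely many points $v\in U\setminus V$ is trivial. As the germ of $\s Q$ at such a $v$ is the quotient of the germ of $\widehat{\s H}$ at $v$ by $\s H(\s O_{v}^{\e h})$, this reduces to the local integrality assertion that an element of the germ of $\widehat{\s H}$ at $v$ whose image in $\s Q_{\eta}$ is trivial—that is, which lies in $\s H(F)$ modulo $\s H$—must already be integral at $v$. This follows from the separatedness of $\s H$, and forces $s=\ast$ on all of $U$; the same germ-wise comparison yields injectivity of the map in general.

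The surjectivity step is the heart of the matter. Represent the given class by an adele $a=(a_{v})\in\s H(\mathbb A_{U})$, let $T_{0}$ be the finite set of places with $a_{v}\notin\s H(\s O_{v})$, and put $V=U\setminus T_{0}$, so that $a\in\widehat{\s H}(V)$ and its class $\overline{a}\in\s Q(V)$ has the prescribed germ at $\eta$. It then suffices to extend $\overline{a}$ across each $v_{0}\in T_{0}$. On a small \'etale (hence Nisnevich) neighborhood $W\ra U$ of $v_{0}$, chosen to meet $T_{0}$ only at $v_{0}$, this means producing a section $\gamma\in\s H(W\setminus\{v_{0}\})$, regular on the punctured neighborhood, such that $a\e\gamma$ becomes integral at $v_{0}$ and hence lifts to $\widehat{\s H}(W)$; the classes $\overline{a}$ on $V$ and $\overline{a\gamma}$ on $W$ then agree on the overlap $W\setminus\{v_{0}\}$ because they differ there by the $\s H$-section $\gamma$. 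Since $\{V\}$ together with the $W$'s for $v_{0}\in T_{0}$ form a Nisnevich covering of $U$, the sheaf condition glues these into a global section $s\in\s Q(U)$ with the required germ at $\eta$. The crucial point, and the step I expect to require the most care, is the existence of the correcting section $\gamma$ trivializing the non-integral part of $a$ at $v_{0}$: for $\s H=\bg_{m,F}$ one simply takes $\gamma$ to be a power of a uniformizer, while in general I would obtain it from the smoothness of $\s H$ by spreading out a local solution over a punctured henselian neighborhood.
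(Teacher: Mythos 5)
Your architecture is the paper's own: identify the map with restriction to the generic stalk, prove surjectivity by correcting the representing adele at each bad place through a section of $\s H$ over a punctured Nisnevich neighborhood, and glue over the covering consisting of $V$ and the neighborhoods $W$ (the paper's $U_{i_v}$, obtained as the normalization of $U_{i_0}\cup\{v\}$ in a finite subextension $F_{v}^{\e\prime}\subset F_{v}^{\e h}$, is exactly your $W$ made explicit). But there is a genuine gap at the step you yourself flag as critical. Passing to the limit over shrinking $W$, one has $\varinjlim\,\s H(W\setminus\{w_{0}\})=\s H(F_{v_{0}}^{\e h})$, so the existence of your correcting section $\gamma$ is precisely the decomposition $\s H(F_{v_{0}})=\s H({\s O}_{v_{0}})\,\s H(F_{v_{0}}^{\e h})$. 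Your proposed justification --- ``obtain it from the smoothness of $\s H$ by spreading out a local solution over a punctured henselian neighborhood'' --- does not produce this: there is no solution over the henselization to spread out until the completion-point $a_{v_{0}}\in\s H(F_{v_{0}})$ has been approximated by a point of $\s H(F_{v_{0}}^{\e h})$, and that approximation is a substantive theorem, not a formal consequence of smoothness. Infinitesimal lifting only yields compatible points over the truncations ${\s O}_{v_{0}}/\mathfrak{m}^{n}$; promoting these to an honest $F_{v_{0}}^{\e h}$-point is Elkik's approximation theorem (\cite{El}, \S II), which is exactly what the paper invokes to get density of $\s H(F_{v}^{\e h})$ in $\s H(F_{v})$. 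Moreover you omit a second ingredient: density alone does not give the product decomposition; the paper also uses that $\s H({\s O}_{v})$ is \emph{open} in $\s H(F_{v})$ to write $x_{v}=a_{\le v}b_{\le v}$ with $a_{\le v}\in\s H({\s O}_{v})$, $b_{\le v}\in\s H(F_{v}^{\e h})$. Smoothness is genuinely needed here (weak approximation can fail without it, cf.\ Remark 3.6), so this step cannot be dismissed as routine; for $\bg_{m}$ your uniformizer trick works only because the relevant density and openness are visible by hand.

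Your injectivity argument is repairable but also leans on the wrong crutch. The stalkwise reduction is legitimate (the Nisnevich topos of $U$ has enough points, namely $F$ and the henselizations ${\s O}^{\e h}_{v}$), but separatedness of $\s H$ only yields the injectivity $\s H({\s O}_{v})\hookrightarrow\s H(F_{v})$. The statement actually doing the work is an integrality descent of the form: a section rational on the punctured neighborhood which becomes integral in the completion at $v$ is already integral near $v$; for this one needs $\s H$ affine of finite type together with $F\cap{\s O}_{v}={\s O}_{(v)}$. This is precisely what the paper encodes, in its direct cocycle manipulation, as the intersections $g\in\widehat{\s H}\e(U_{i}\times_{U}U_{i_{0}})\cap\s H(F)\subset\s H(U_{i}\times_{U}U_{i_{0}})$ and $\widehat{\s H}\e(U_{i})\cap{\s H}(U_{i}\times_{U}U_{i_{0}})={\s H}\e(U_{i})$. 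So: same route as the paper throughout, with the injectivity half fixable by the above substitution, but the surjectivity half incomplete until you import the approximation theorem and the openness of $\s H({\s O}_{v})$ in $\s H(F_{v})$.
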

\begin{proof} To prove injectivity, assume that $q^{1}$ and $q^{2}\in \s Q(U)$
have the same image under the above map. There exists a covering
$\{U_{i}\ra U\}_{i\in I}$ and families of sections $s_{i}^{\le k}\in
\widehat{\s H}(U_{i})$ and $s_{ij}^{\le k}\in\widehat{\s
H}(U_{i}\times_{U}U_{j})$ ($k=1,2$) such that $s_{i}^{\e
k}=s_{j}^{\e k}s_{ij}^{\e k}$ for all $i,j,k$ and $p(s_{i}^{\le
k})=q_{i}^{\e k}$ for all $i,k$, where $q_{i}^{\e k}$ is the
restriction of $q^{\e k}$ to $U_{i}$ and $p\colon\widehat{\s H}\ra\s
Q$ is the canonical projection. The fact that $q^{1}$ and $q^{2}$
have the same image in $\s H({\mathbb A}_{\e U})/\s H(F)$ under the
map of the lemma means that there exist an index $i_{0}\in I$, a
nonempty Zariski-open subset $U_{i_{0}}\hookrightarrow U$ and a
section $g\in \s H(F)$ such that $s_{i_{0}}^{\e 1}=s_{i_{0}}^{\e
2}g$. Then, for every $i$,
$$
s_{i}^{\e 1}=s_{i_{0}}^{\e 1}\e s_{i,i_{0}}^{\e 1}=s_{i_{0}}^{\e
2}\e g\e s_{i,i_{0}}^{\e 1}=s_{i}^{\e 2}\e s_{i_{0},i}^{\e 2}\e g\e
s_{i,i_{0}}^{\e 1}.
$$
It follows from the above that $g\in \widehat{\s
H}\e(U_{i}\times_{U}U_{i_{0}})\cap\s H(F)\subset{\s
H}(U_{i}\times_{U}U_{i_{0}})$. Consequently,
$$
(s_{i}^{\e 2})^{-1}s_{i}^{\e 1}=s_{i_{0},i}^{\e 2}\e g\e
s_{i,i_{0}}^{\e 1}\in\widehat{\s H}\e(U_{i})\cap{\s
H}(U_{i}\times_{U}U_{i_{0}})={\s H}\e(U_{i}),
$$
whence $s_{i}^{\e 1}=s_{i}^{\e 2}\e{\s H}\e(U_{i})$ for all $i$. We
conclude that
$$
q_{i}^{\e 1}=p(s_{i}^{\e 1})=p(s_{i}^{\e 2})=q_{i}^{\e 2}
$$
for all $i$, whence $q^{1}=q^{2}$.

 To prove surjectivity, let $c\in
\s H({\mathbb A}_{\e U})/\s H(F)$ and let $x=(x_{v})_{v\in
U_{0}}\in\s H({\mathbb A}_{\e U})$ be a representative of $c$. There
exists a Zariski open subscheme $U_{i_{0}}\subset U$ such that
$x\in\s H({\mathbb A}_{\e U}(U_{i_{0}}))$. We write $f_{i_{0}}$ for
the canonical inclusion $U_{i_{0}}\hookrightarrow U$. Let
$S^{\e\prime}=U\setminus U_{i_{0}}$. Since $\s H_{F}$ is smooth,
R.Elkik's approximation theorem \cite{El}, \S II, shows that $\s
H(\s O_{v}^{\e h})$ (resp., $\s H(F_{v}^{\e h})$) is dense in $\s
H({\s O}_{v})$ (resp., $\s H(F_{v})$) in the $v$-adic topology for
every $v\in U_{0}$. Since $\s H({\s O}_{v})$ is open in $\s
H(F_{v})$, we conclude that
$$
\s H(F_{v})=\s H({\s O}_{v})\s H(F_{v}^{\e h})
$$
for every $v\in U_{0}$. Consequently, for each $v\in S^{\e\prime}$,
$x_{v}\in \s H(F_{v})$ decomposes as $x_{v}=a_{\le v}b_{\le v}$,
where $a_{\le v}\in \s H({\s O}_{v})$ and $b_{\le v}\in\s
H(F_{v}^{\e h})$. Choose, for each $v\in S^{\e\prime}$, a finite
extension $F_{v}^{\e\prime}$ of $F_{v}$ contained in $F_{v}^{\e h}$
such that $b_{\le v}\in\s H(F_{v}^{\e\prime})$. Further, define
$U_{i_{0},v}= U_{i_{0}}\cup\{v\}$ and let $U_{i_{v}}=\spec\s
O_{i_{v}}$ be the normalization of $U_{i_{0},v}$ in
$F_{v}^{\e\prime}$. Note that $\s H(U_{i_{v}}\times_{U}\spec F)=\s
H(F_{v}^{\e\prime}\e)$. We write $f_{i_{v}}\colon U_{i_{v}}\ra U$
for the canonical morphism. Let
$x^{\e\prime}=(x^{\e\prime}_{v})_{v\in U_{0}}$ be the adele given by
$$
x^{\e\prime}_{v}=\begin{cases}a_{v}\quad\text{if $v\in S^{\e\prime}$}\\
x_{v}\quad\text{if $v\in U_{i_{0}}$}.\end{cases}
$$
Clearly, $x\equiv x^{\e\prime}\pmod{\s H(F_{v}^{\e\prime})}$ for
every $v\in S^{\e\prime}$. Now let $I=\{i_{0}\}\cup\{i_{v}\colon
v\in S^{\e\prime}\}$ and let
$$\begin{array}{rcl}
q_{\e i_{v}}&=&x^{\e\prime}\e\s H(U_{i_{v}})\in \s Q(U_{i_{v}})
\quad(v\in S^{\e\prime})\\
q_{\e i_{0}}&=&x\s H(U_{i_{0}})\in\s Q(U_{i_{0}})
\end{array}
$$
be a family of local sections of $\s Q$ associated to the covering
$(f_{i}\colon U_{i}\ra U)_{i\in I}$. Then $(q_{i})_{i\in I}$ defines
a section $q\in\s Q(U)$ which maps to $c$.
\end{proof}

The above lemma enables us to identify $\s Q(U)$ and $\s H({\mathbb
A}_{\e U})/\s H(F)$. The map $p$ appearing in the proof of the lemma
induces a map $p_{_{U}}\colon \s H\!\left(\lbe{\mathbb A}_{\e
U}(U)\lbe\right)\ra\s Q(U)$. The {\it class set} $\e C(\s H)$ of $\s
H$ is by definition the coset space $\img p_{_{U}}\!\setminus\be\s
Q(U)$, i.e.,
$$
C(\s H)=\e p_{_{U}}\be(\s H\!\left(\lbe{\mathbb A}_{\e
U}(U)\lbe\right))\backslash\e \s H\!\left(\lbe{\mathbb A}_{\e
U}\lbe\right)\be/\e\s H(\lbe F\lbe).
$$

\begin{theorem} There exists a canonical bijection
$$
\delta_{\e\s H}\colon C(\s H)\overset{\sim}\ra
H_{{\rm{Nis}}}^{1}(U,{\s H}\e).
$$
\end{theorem}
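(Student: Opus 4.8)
The plan is to deduce the asserted bijection from the exact sequence of (nonabelian) Nisnevich cohomology attached to the short exact sequence of sheaves
$$
1\ra\s H\ra\widehat{\s H}\ra\s Q\ra 1
$$
on $U_{\text{Nis}}$, where the first map is the injection $\s H\hookrightarrow\widehat{\s H}$ introduced before Lemma 3.4 and $\s Q=\widehat{\s H}/\s H$. First I would recall that $H^{1}_{\text{Nis}}(U,\s H)$ is the pointed set of isomorphism classes of $\s H$-torsors on $U_{\text{Nis}}$, so that, since $\widehat{\s H}\ra\s Q$ is itself an $\s H$-torsor over $\s Q$, the sequence above yields the usual exact sequence of pointed sets
$$
\widehat{\s H}(U)\overset{p_{_{U}}}\ra\s Q(U)\overset{\delta}\ra H^{1}_{\text{Nis}}(U,\s H)\ra H^{1}_{\text{Nis}}\big(U,\widehat{\s H}\big),
$$
in which $\delta$ is the connecting map carrying a global section $q$ of $\s Q$ to the class of the $\s H$-torsor obtained by pulling back $\widehat{\s H}\ra\s Q$ along $q$. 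The bijection $\delta_{\s H}$ will be the map that $\delta$ induces on the appropriate orbit space.

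Second, I would establish surjectivity of $\delta$. By Lemma 3.4 the last term $H^{1}_{\text{Nis}}\big(U,\widehat{\s H}\big)$ is trivial, so exactness at $H^{1}_{\text{Nis}}(U,\s H)$ forces the image of $\delta$ to be the preimage of the base point, namely all of $H^{1}_{\text{Nis}}(U,\s H)$; hence $\delta$ is onto.

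Third, and this is where the real content lies, I would identify the fibers of $\delta$. The standard fiber computation in nonabelian cohomology (the torsor-twisting argument, cf. Serre, \emph{Galois Cohomology}, I.5) shows that $\delta$ is constant on the orbits of the left action of $\widehat{\s H}(U)$ on $\s Q(U)$ and separates distinct orbits: for $q_{1},q_{2}\in\s Q(U)$ one has $\delta(q_{1})=\delta(q_{2})$ if and only if $q_{2}=h\cdot q_{1}$ for some $h\in\widehat{\s H}(U)$. Now, by definition $\widehat{\s H}(U)=\s H({\mathbb A}_{U}(U))$, and, under the bijection $\s Q(U)\simeq\s H({\mathbb A}_{U})/\s H(F)$ of the preceding lemma, this action becomes left multiplication of $\s H({\mathbb A}_{U}(U))$ on the coset space $\s H({\mathbb A}_{U})/\s H(F)$, whose orbit set is exactly the double-coset space $C(\s H)=\img p_{_{U}}\backslash\e\s Q(U)$. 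Combining this with the surjectivity from the second step, the induced map $\delta_{\s H}\colon C(\s H)\ra H^{1}_{\text{Nis}}(U,\s H)$ is a bijection.

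The main obstacle I anticipate is making the third step fully rigorous in the Nisnevich topos rather than merely quoting it from the Galois case: one must set up nonabelian $H^{1}$ as isomorphism classes of Nisnevich $\s H$-torsors, verify that the formation of $\delta$ and its twisting behavior carry over verbatim (a formal consequence of the sheaf-theoretic definitions together with the fact that $\widehat{\s H}\ra\s Q$ is an $\s H$-torsor), and check that every identification is canonical and independent of the chosen local lifts. If $\s H$ is commutative --- the only case required in the applications to tori --- all the pointed sets become abelian groups, the twisting argument is unnecessary, and the orbit description of the fibers is already encoded in exactness at $\s Q(U)$, so the proof reduces to a routine diagram chase.
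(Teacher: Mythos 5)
Your proposal is correct and follows essentially the same route as the paper: the paper's proof likewise applies the Nisnevich cohomology sequence of pointed sets attached to $1\ra\s H\ra\widehat{\s H}\ra\s Q\ra 1$, uses Lemma 3.3 (the vanishing of $H^{1}_{\rm Nis}\big(U,\widehat{\s H}\e\big)$) for surjectivity, and identifies $C(\s H)=\img p_{_{U}}\backslash\e\s Q(U)$ with $H^{1}_{\rm Nis}(U,\s H)$. The only difference is one of detail: the paper leaves the identification of the fibers of the connecting map with the $\widehat{\s H}(U)$-orbits implicit, whereas you correctly spell out the standard twisting argument (Serre, \emph{Galois Cohomology}, I.5) that justifies it.
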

\begin{proof} By Lemma 3.3, the exact sequence of Nisnevich sheaves
$$
1\ra\s H\ra\widehat{\s H}\ra\s Q\ra 1
$$
induces an exact sequence of pointed sets
$$
1\ra\s H(U)\ra \widehat{\s H}(U)\overset{p_{_U}}\longrightarrow\s
Q(U)\ra H_{{\rm{Nis}}}^{1}(U,{\s H}\e)\ra 1.
$$
Thus there exists a bijection $C(\s H)=\img p_{_U}\!\setminus\be\s
Q(U)\overset{\sim}\ra H_{{\rm{Nis}}}^{1}(U,{\s H}\e)$.
\end{proof}

\begin{remark} The proof of Lemma 3.4 shows why the Zariski topology
is too coarse to yield a cohomological interpretation of the class
set $C(\s H)$: in general, $\s H(\s O_{(v)})$, where $\s O_{(v)}$
denotes the local ring of $U$ at $v$, is not dense in $\s H({\s
O}_{v})$ (failure of weak approximation). On the other hand, the
\'etale topology is too fine, in the sense that
$H_{\text{\'et}}^{1}(U,{\s H}\e)$ is usually larger than $C(\s H)$
(in fact, it can be shown that there exists a canonical embedding
$C(\s H)\hookrightarrow H_{\text{\'et}}^{1}(U,{\s H}\e)$ whose
cokernel is often nontrivial).
\end{remark}

\section{The capitulation kernel}

In this Section we prove Theorem 1.1 (this is Theorem 4.6 below).

\smallskip

For each prime $w$ of $K$, the canonical map $\tt_{\e\s O_{
w}}\ra(i_{w})_{*}\Phi_{w}$ induces a map $\vartheta_{\lbe w}\colon
T(K_{w})\ra\Phi_{w}(k(w))$ which generalizes the $w$-adic valuation
$\text{ord}_{\le w}\colon K_{w}^{*}\ra\bz$\footnote{\e Keeping this
in mind while reading Sections 4 and 5 should serve as a guide for
the reader.}. The composite $T(K)\hookrightarrow
T(K_{w})\overset{\vartheta_{\lbe w}}\longrightarrow \Phi_{w}(k(w))$
will also be denoted by $\vartheta_{\lbe w}$. For each $v\notin S$,
we have a canonical map $\oplus_{w\mid v}\vartheta_{\lbe w}\colon
T(K)\ra\bigoplus_{w\mid v}\Phi_{w}(k(w))$. Consider
\begin{equation}\label{theta}
\vartheta_{_{\be S}}=\bigoplus_{v\notin S}\bigoplus_{w\mid
v}\vartheta_{\lbe w}\colon T(K)\ra\bigoplus_{v\notin
S}\bigoplus_{w\mid v}\Phi_{w}(k(w)).
\end{equation}

\begin{proposition} There exists a canonical exact sequence of
$G$-modules
$$
1\ra\tto\be\big(\ut\big)\ra T(K)\overset{\vartheta_{_{\!
S}}}\longrightarrow\displaystyle\bigoplus_{v\notin
S}\bigoplus_{w\mid v}\,\Phi_{w}(k(w))\ra  C_{\e T,K,S_{K}}\ra 0,
$$
where $\vartheta_{_{\! S}}$ is the map \eqref{theta}.
\end{proposition}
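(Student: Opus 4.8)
The plan is to obtain the four-term sequence as the long exact Nisnevich cohomology sequence attached to the short exact sequence of abelian sheaves on $\ut_{\text{Nis}}$ that relates the connected Néron--Raynaud model to the full one. Since $\tto$ is the fiberwise identity component of $\tt$ and the quotient $\tt/\tto$ is represented by the étale sheaf of connected components, which is trivial on the (connected) generic fiber and so is concentrated on the closed fibers, I would first record
$$
0\ra\tto\ra\tt\ra\displaystyle\bigoplus_{w\notin S_{K}}(i_{w})_{*}\Phi_{w}\ra 0 .
$$
The surjectivity on the right holds because $\tt\ra\tt/\tto$ is a smooth surjection of group schemes, hence an epimorphism of Nisnevich sheaves (the Nisnevich stalks are henselian local rings, over which smooth surjections admit sections), and the component sheaf splits as the displayed sum of skyscrapers because the $\Phi_{w}$ have disjoint (closed) support, with $\Phi_{w}=i_{w}^{*}(\tt/\tto)$.

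Next I would pass to the long exact Nisnevich cohomology sequence over $\ut$ and identify each term. By the Néron mapping property $\tt(\ut)=T(K)$ (indeed $\tt$ represents $\widetilde{\jmath}_{*}T_{K}$), while the global sections of the above sum of skyscrapers are $\bigoplus_{w}\Phi_{w}(k(w))$: a global section has closed support, which on the Dedekind scheme $\ut$ is finite, so only finitely many components are nonzero. The induced map $\tt(\ut)\ra\bigoplus_{w}\Phi_{w}(k(w))$ is, place by place, the composite $T(K)\hookrightarrow T(K_{w})\overset{\vartheta_{\lbe w}}{\ra}\Phi_{w}(k(w))$, i.e. exactly the map $\vartheta_{_{\!S}}$ of \eqref{theta}, and it indeed lands in the direct sum because a fixed $x\in T(K)$ lies in $\tto(\s O_{w})$ for all but finitely many $w$. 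Exactness at $\tto(\ut)$ and at $T(K)$ then yields $\tto(\ut)=\krn\vartheta_{_{\!S}}$ together with the first half of the stated sequence. For the right-hand terms I would invoke Theorem 3.5 to write $H^{1}_{\text{Nis}}(\ut,\tto)=C_{\e T,K,S_{K}}$, and I would note $H^{1}_{\text{Nis}}\big(\ut,\bigoplus_{w}(i_{w})_{*}\Phi_{w}\big)=0$, since cohomology over the Noetherian scheme $\ut$ commutes with the direct sum and each skyscraper summand pushes forward the cohomology of the Nisnevich point $\spec k(w)$, which vanishes in positive degrees because $\spec k(w)$ has no nontrivial Nisnevich coverings.

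The crux is the final surjectivity, namely that the connecting map $\delta\colon\bigoplus_{w}\Phi_{w}(k(w))\ra C_{\e T,K,S_{K}}$ is onto, equivalently that the map $H^{1}_{\text{Nis}}(\ut,\tto)\ra H^{1}_{\text{Nis}}(\ut,\tt)$ induced by $\tto\hookrightarrow\tt$ is zero (for then the already-surjective last arrow forces $H^{1}_{\text{Nis}}(\ut,\tt)=0$). This is the only place where the distinction between $\tto$ and $\tt$ is used, and I expect it to be the main obstacle, since Theorem 3.5 itself is stated for schemes of finite type whereas $\tt$ is only locally of finite type. I would settle it using the defining property of the full Néron model, $\tt(\s O_{w})=T(K_{w})$ for every closed point $w$. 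By the class-set description of $C_{\e T,K,S_{K}}=H^{1}_{\text{Nis}}(\ut,\tto)$, a class is represented by an adele $(x_{w})\in\tto(\mathbb A_{\,\ut})$ whose every component already lies in $T(K_{w})=\tt(\s O_{w})$; hence, by the functoriality of the construction of Section 3, its image in $H^{1}_{\text{Nis}}(\ut,\tt)$ is represented by an everywhere-integral adele and is therefore trivial. This argument uses the full Section-3 machinery only for the finite-type scheme $\tto$, together with the elementary adelic class map for $\tt$, and so sidesteps the finite-type hypothesis.

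Finally, the entire construction is canonical and compatible with the action of $G=\text{Gal}(K/F)$ on $\ut$, on $\tt$, on $\tto$ and on the family $\{\Phi_{w}\}$ (with $G$ permuting the places $w\mid v$ and acting on the component groups), so every arrow is $G$-equivariant and the sequence is one of $G$-modules, as asserted.
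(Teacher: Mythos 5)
Your proposal is correct and takes essentially the same route as the proof the paper invokes (it cites \cite{mrl}, \S 3, which likewise derives the sequence from the Nisnevich cohomology of $1\ra\tto\ra\tt\ra\bigoplus_{w\notin S_{K}}(i_{w})_{*}\Phi_{w}\ra 1$, using the class-set interpretation of $H^{1}_{\text{Nis}}$ and the N\'eron property $\tt(\s O_{w})=T(K_{w})$ to trivialize the image in $H^{1}_{\text{Nis}}\be\big(\ut,\tt\big)$), and your handling of the finite-type issue for $\tt$ via the elementary adelic boundary map is sound. One local repair: your justification of the sheaf surjectivity onto the component sheaf --- that smooth surjections admit sections over the henselian Nisnevich stalks --- is false in general (the closed fibre need not have a rational point over the residue field); here the fibre of $\tt\ra\tt/\tto$ over a given component is a torsor under the connected group $\tto$ over the \emph{finite} residue field $k(w)$, so Lang's theorem supplies a rational point and henselian smooth lifting then gives the required section.
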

\begin{proof} See \cite{mrl}, \S 3.
\end{proof}

We now split the exact sequence of the proposition into two short
exact sequences of $G$-modules as follows:
\begin{equation}\label{seq1}
1\ra\tto\be\big(\ut\big)\ra T(K)\ra T(K)/\,\tto\be\big(\ut\big)\ra 1
\end{equation}
and
$$
1\ra T(K)/\,\tto\be\big(\ut\big)\ra\displaystyle\bigoplus_{v\notin
S}\bigoplus_{w\mid v}\,\Phi_{w}(k(w))\ra C_{\e T,\e K,\e S_{K}}\ra
0. $$
These sequences induce connecting homomorphisms
\begin{equation}\label{p1}
\partial_{\e 1}\colon\big(T(K)/\tto\be\big(\ut\big)\big)^{G}\ra H^{1}\ng
\big(G,\tto\be(\ut)\big)
\end{equation}
and
\begin{equation}\label{p2}
\partial_{\e 2}\colon C_{\e T,\e K,\e S_{K}}^{\e G}\ra H^{\e
1}\big(G,T(K)/\,\tto\be\big(\ut\big)\big).
\end{equation}
For a general description of these homomorphisms, see \cite{AW},
p.97. Set
$$
(C_{\e T,\e K,\e S_{K}})_{\text{trans}}^{G}=\krn(\partial_{\e 2}\e).
$$
From the general description of $\partial_{\e 2}$ just mentioned, it
is not difficult to check that the image of the capitulation map
$j_{\e T,\e K/F,\e S}\colon C_{\e T,\e F,\e S}\ra C_{\e T,\e K,\e
S_{K}}^{\e G}$ is contained in $(C_{\e T,\e K,\e
S_{K}})_{\text{trans}}^{G}$. Thus $j_{\e T,\e K/F,\e S}$ induces a
map
\begin{equation}\label{j'}
j_{_{T,\e K/F,S}}^{\e\prime}\colon C_{\e T,F,S}\ra (C_{\e
T,K,S_{K}})_{\text{trans}}^{G}
\end{equation}
such that $\krn j_{\e T,\e K/F,\e S}^{\e\prime}=\krn j_{_{T,\e
K/F,S}}$.

For each $v\notin S$, we will identify the $G$-module
$\bigoplus_{w\mid v}\Phi_{w}(k(w))$ with the $G$-module induced by
the $G_{w_{v}}$-module $\Phi_{w_{v}}\be(k(w_{v}))$, where $w_{v}$ is
the prime of $K$ lying above $v$ fixed previously and
$G_{w_{v}}=\text{Gal}(K_{w_{v}}/F_{v})$. By Shapiro's lemma,
$$
H^{\e i}\big(G,\textstyle\bigoplus_{w\mid
v}\Phi_{w}(k(w))\big)=H^{\e
i}(G_{w_{v}},\Phi_{w_{v}}\be(k(w_{v})))
$$
for every $i\geq 0$. Further, since $I_{w_{v}}$ acts trivially on
$\Phi_{w_{v}}\be(k(w_{v}))$, we have
$$
\Phi_{w_{v}}\be(k(w_{v}))^{G_{w_{v}}}=
\Phi_{w_{v}}\be(k(w_{v}))^{G(w_{v})},
$$
where $G(w_{v})=\text{Gal}(k(w_{v})/k(v))$. Thus
$$
\big(\textstyle\bigoplus_{w\mid v}\Phi_{w}(k(w))\big)^{\be
G}=\Phi_{w_{v}}\be(k(w_{v}))^{G(w_{v})}.
$$
There exists a canonical map
$\Phi_{v}(k(v))\ra\big(\textstyle\bigoplus_{w\mid
v}\Phi_{w}(k(w))\big)^{\be G}$, and therefore we obtain a map
\begin{equation}\label{loc cap}
\delta_{v}=\delta_{\e T,\e K/F,\e v}\colon\Phi_{v}(k(v))\ra
\Phi_{w_{v}}\be(k(w_{v}))^{G(w_{v})}.
\end{equation}

We will write $B$ for the set of non-archimedean primes of $F$ where
$T$ has bad reduction.

\begin{lemma} There exists a canonical isomorphism
$$
\krn(\oplus_{v\notin S}\,\delta_{v})=\bigoplus_{v\e\in\e B\e\setminus\e
S}H^{\e 1}\!\be\left(I_{w_{v}},T\!\left(K_{w_{v}}
^{\le{\rm{nr}}}\right)\right)^{G_{k(\lbe v)}}.
$$
\end{lemma}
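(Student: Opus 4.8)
The plan is to treat the map $\bigoplus_{v\notin S}\delta_v$ one prime at a time and to pass to the strict henselizations, where the valuation map $\vartheta$ of Proposition 4.1 becomes surjective. First I would record that, because $\Phi_v$ and $\Phi_{w_v}$ are \'etale sheaves on the residue fields, one has $\Phi_v(k(v))=\Phi_v(\overline{k(v)})^{G_{k(v)}}$ and, using $G(w_v)=G_{k(v)}/G_{k(w_v)}$, $\Phi_{w_v}(k(w_v))^{G(w_v)}=\Phi_{w_v}(\overline{k(v)})^{G_{k(v)}}$; hence $\delta_v$ is obtained from the geometric base-change homomorphism $\bar\delta_v\colon \Phi_v(\overline{k(v)})\to\Phi_{w_v}(\overline{k(v)})$ by taking $G_{k(v)}$-invariants. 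Since this functor is left exact and $\krn\bar\delta_v$ is a $G_{k(v)}$-submodule, $\krn\delta_v=(\krn\bar\delta_v)^{G_{k(v)}}$ and $\krn(\bigoplus_{v\notin S}\delta_v)=\bigoplus_{v\notin S}(\krn\bar\delta_v)^{G_{k(v)}}$. It therefore suffices to identify each $\krn\bar\delta_v$, as a $G_{k(v)}$-module, with $H^1(I_{w_v},T(K_{w_v}^{\mathrm{nr}}))$, and to see that this group vanishes for $v\notin B$.

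For the vanishing at good primes: if $v\notin B$ then $T$ splits over $F_v^{\mathrm{nr}}\subseteq K_{w_v}^{\mathrm{nr}}$, so $T(K_{w_v}^{\mathrm{nr}})$ is a finite product of copies of $(K_{w_v}^{\mathrm{nr}})^{*}$ as an $I_{w_v}$-module and Hilbert~90 gives $H^1(I_{w_v},T(K_{w_v}^{\mathrm{nr}}))=0$; concurrently $\Phi_v(\overline{k(v)})=\Phi_{w_v}(\overline{k(v)})=X^{\vee}$ and $\bar\delta_v$ is multiplication by the ramification index $e_v$, hence injective, so $\krn\bar\delta_v=0$. This already confines the direct sum to the primes $v\in B\setminus S$.

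The core of the argument is the local identification at a prime $v\in B\setminus S$. Here I would apply the strictly henselian (geometric) form of Proposition~4.1 over the two bases $\mathcal O_{F_v^{\mathrm{nr}}}$ and $\mathcal O_{K_{w_v}^{\mathrm{nr}}}$, giving short exact sequences
$$0\to\sto(\mathcal O_{F_v^{\mathrm{nr}}})\to T(F_v^{\mathrm{nr}})\overset{\vartheta}{\longrightarrow}\Phi_v(\overline{k(v)})\to 0,\qquad 0\to\tto(\mathcal O_{K_{w_v}^{\mathrm{nr}}})\to T(K_{w_v}^{\mathrm{nr}})\overset{\vartheta}{\longrightarrow}\Phi_{w_v}(\overline{k(v)})\to 0,$$
the second of which is a sequence of $I_{w_v}$-modules, together with the base-change morphism relating the first to the second and the identity $T(K_{w_v}^{\mathrm{nr}})^{I_{w_v}}=T(F_v^{\mathrm{nr}})$. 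Comparing $\vartheta$ on $F_v^{\mathrm{nr}}$ with the restriction to the invariants of $\vartheta$ on $K_{w_v}^{\mathrm{nr}}$ (so that the latter equals $\bar\delta_v\circ\vartheta$) identifies $\krn\bar\delta_v$ with the subquotient $\tto(\mathcal O_{K_{w_v}^{\mathrm{nr}}})^{I_{w_v}}/\sto(\mathcal O_{F_v^{\mathrm{nr}}})$. The plan is then to feed the second sequence into the long exact $I_{w_v}$-cohomology sequence and to read off this subquotient through the connecting homomorphism into $H^1(I_{w_v},T(K_{w_v}^{\mathrm{nr}}))$, keeping track of the $G_{k(v)}$-action via the group $\mathrm{Gal}(K_{w_v}^{\mathrm{nr}}/F_v)$, whose quotient by $I_{w_v}$ is $G_{k(v)}$.

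The step I expect to be the main obstacle is precisely this last identification: controlling the integral points $\tto(\mathcal O_{K_{w_v}^{\mathrm{nr}}})$ of the identity component under the inertia action and matching $\tto(\mathcal O_{K_{w_v}^{\mathrm{nr}}})^{I_{w_v}}/\sto(\mathcal O_{F_v^{\mathrm{nr}}})$ \emph{canonically} and $G_{k(v)}$-equivariantly with the inertia cohomology of the full group $T(K_{w_v}^{\mathrm{nr}})$. This hinges on the behaviour of the connected N\'eron model under the ramified base change $\mathcal O_{F_v^{\mathrm{nr}}}\to\mathcal O_{K_{w_v}^{\mathrm{nr}}}$, together with the vanishing of the relevant $H^1(I_{w_v},\tto(\mathcal O_{K_{w_v}^{\mathrm{nr}}}))$-type terms, for which I would invoke the structure theory of N\'eron models of tori (e.g.\ \cite{X}) and a Lang/smoothness argument over the separably closed residue field. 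Once the connecting map is shown to induce the desired isomorphism, taking $G_{k(v)}$-invariants and summing over $v\in B\setminus S$ yields the stated formula.
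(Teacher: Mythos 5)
Your outer reductions are sound, and one of them coincides exactly with the paper's own argument: the elimination of the primes $v\notin B$ via multiplicative reduction and Hilbert's Theorem 90 (together with injectivity of the geometric base-change map, which is multiplication by $e_v$ on $X^{\vee}$) is precisely the second half of the paper's proof, and passing to the geometric map $\bar\delta_v$ and taking $G_{k(v)}$-invariants is a legitimate bookkeeping step. But the paper obtains the per-prime identification $\krn\delta_v\cong H^1\lbe\big(I_{w_v},T(K_{w_v}^{\mathrm{nr}})\big)^{G_{k(v)}}$ \emph{wholesale}, by citing \cite{mrl}, Lemma 3.3; your proposal undertakes to reprove that lemma, and this is where there is a genuine gap. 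You correctly identify $\krn\bar\delta_v$ with the degree-zero subquotient $\tto\lbe(\mathcal O_{K_{w_v}^{\mathrm{nr}}})^{I_{w_v}}\big/\sto\lbe(\mathcal O_{F_v^{\mathrm{nr}}})$, but the long exact $I_{w_v}$-cohomology sequence of $0\to\tto\lbe(\mathcal O_{K_{w_v}^{\mathrm{nr}}})\to T(K_{w_v}^{\mathrm{nr}})\to\Phi_{w_v}\lbe\big(\overline{k(v)}\big)\to 0$ simply contains no map from this subquotient into $H^1\lbe\big(I_{w_v},T(K_{w_v}^{\mathrm{nr}})\big)$: its connecting homomorphism runs from $\Phi_{w_v}\lbe\big(\overline{k(v)}\big)^{I_{w_v}}$ into $H^1\lbe\big(I_{w_v},\tto\lbe(\mathcal O_{K_{w_v}^{\mathrm{nr}}})\big)$, i.e.\ it controls the \emph{cokernel} of $T(F_v^{\mathrm{nr}})\to\Phi_{w_v}^{I_{w_v}}$ by $H^1$ of the integral points, not the kernel subquotient by $H^1$ of $T$ itself. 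The sentence ``once the connecting map is shown to induce the desired isomorphism'' therefore defers the entire content of the lemma to a mechanism that does not exist as described.

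The auxiliary vanishing you invoke to bridge this — killing ``$H^1(I_{w_v},\tto(\mathcal O_{K_{w_v}^{\mathrm{nr}}}))$-type terms'' by ``a Lang/smoothness argument over the separably closed residue field'' — is also not available. Lang-type and Hensel-type arguments annihilate cohomology of the profinite group $G_{k(v)}$ (equivalently, torsors over a strictly henselian base); here the acting group is the inertia quotient $I_{w_v}=\mathrm{Gal}(K_{w_v}^{\mathrm{nr}}/F_v^{\mathrm{nr}})$, acting semilinearly on the integral points, and $H^1\lbe\big(I_{w_v},\tto\lbe(\mathcal O_{K_{w_v}^{\mathrm{nr}}})\big)$ is exactly the invariant measuring ramification in $K_{w_v}/F_v$: already for $T=\bg_{m}$ it is computed by norms of units, $\mathcal O^{*}_{F_v^{\mathrm{nr}}}\big/N\big(\mathcal O^{*}_{K_{w_v}^{\mathrm{nr}}}\big)$ up to the usual cyclic-periodicity shift, and it is nontrivial whenever $K_{w_v}/F_v$ is wildly ramified. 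So the crux step — the only thing the paper's proof actually imports from \cite{mrl}, Lemma 3.3 — remains unproved, and your proposed route to it cannot be repaired as stated. The argument in \cite{mrl} is different in kind: one first identifies the torsion subgroup $\Phi_v\lbe\big(\overline{k(v)}\big)_{\mathrm{tors}}$ with $H^1(F_v^{\mathrm{nr}},T)$, and then recognizes $\krn\bar\delta_v$ inside it as the kernel of the restriction $H^1(F_v^{\mathrm{nr}},T)\to H^1(K_{w_v}^{\mathrm{nr}},T)$, which inflation--restriction identifies with $H^1\lbe\big(I_{w_v},T(K_{w_v}^{\mathrm{nr}})\big)$; the comparison lives in degree one throughout and never passes through the boundary map of the integral-points sequence. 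Your write-up contains no substitute for either of these two inputs.
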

\begin{proof} By \cite{mrl}, Lemma 3.3, for each $v\notin S$ there exists a
canonical isomorphism $\krn\delta_{v}=H^{\e
1}\!\left(I_{w_{v}},T\!\left(K_{w_{v}}
^{\le{\rm{nr}}}\right)\right)^{G_{k(\lbe v)}}$. Now, if $T$ splits
over $F_{v}^{\e\text{nr}}$ (i.e., has multiplicative reduction at
$v$ \cite{NX}, Proposition 1.1), then $\krn\delta_{v}=0$ by
Hilbert's theorem 90. This yields the lemma.
\end{proof}

\begin{remark} As shown in \cite{mrl}, proof of Lemma 3.3, $H^{\e 1}\!\be\left(I_{w_{v}},T\!\left(K_{w_{v}}
^{\le{\rm{nr}}}\right)\right)^{G_{k(\lbe v)}}$ is canonically isomorphic to a subgroup of $\Phi_{v}(k(v))_{\text{tors}}$. It follows that
$\bigoplus_{\e v\in B\setminus
S}H^{\e 1}\!\be\left(I_{w_{v}},T\!\left(K_{w_{v}}
^{\le{\rm{nr}}}\right)\right)^{G_{k(\lbe v)}}=0\,$ if $\,\Phi_{v}(k(v))_{\text{tors}}=0$ for every $v\in B\setminus S$.
\end{remark}

\medskip

Next we describe $\cok\be(\oplus_{v\notin S}\,\delta_{v})$ under the
assumption that $T_{K}$ has multiplicative reduction over $\ut$,
i.e., $S_{K}$ contains all primes of bad reduction for $T_{K}$ or,
equivalently, $I_{\overline{w}}$ acts trivially on $X$ for every
$w\notin S_{K}$ (see \cite{NX}, Proposition 1.1)\footnote{The
argument that follows is a generalization of the proof of
\cite{CAmb}, Lemma 2.3.}. Note that the above assumption is
trivially satisfied if $K$ splits $T$. In general,
$\cok\be(\oplus_{v\notin S}\,\delta_{v})$ fits into a 5-term exact
sequence (which is omitted).

Let $v\notin S$. The inertia group $I_{\e\overline{v}}$ acts on the
$G_{\lbe F}$-module $X$ through a finite quotient $J_{\overline{v}}$
(say) and there exists a canonical map
$$
{\rm{Nm}}_{v}\colon X\ra
X^{I_{\overline{v}}},\chi\mapsto\sum_{g\e\in
J_{\overline{v}}}\chi^{\e g}.
$$
Let $\widehat{T}_{v}$ be the $F_{v}$-torus which corresponds to the
subgroup $\krn{\rm{Nm}}_{v}$ of $X$. Then there exists a canonical
exact sequence
$$
0\ra T^{\e\prime}_{v}\ra T_{F_{v}}\ra \widehat{T}_{\lbe v}\ra 0,
$$
where $T^{\e\prime}_{v}$ is the largest subtorus of $T_{F_{v}}$
having multiplicative reduction. See \cite{NX}, Proposition 1.2. In
particular, $\widehat{T}_{\lbe v}=0$ if $T$ has multiplicative
reduction at $v$. We will write $\Phi^{\e\prime}_{v}$ (resp.,
$\widehat{\Phi}_{v}$) for the sheaf of connected components of the
N\'eron-Raynaud model of $T^{\e\prime}_{v}$ (resp.,
$\widehat{T}_{\lbe v}$). Let
\begin{equation}\label{ct}
C_{\e T,\e F_{v}}=\cok[\e
T(F_{v})\overset{\vartheta_{\lbe v}}\longrightarrow\Phi_{v}(k(v))],
\end{equation}
where $\vartheta_{\lbe v}$ is the generalization of $\text{ord}_{\le
v}$ introduced above. There exists a canonical exact commutative
diagram
$$
\xymatrix{0\ar[r] &
T^{\e\prime}_{v}(F_{v})\ar[d]^{\vartheta_{\be v}^{\e\prime}}\ar[r] &
T(F_{v})\ar[r]\ar[d]^{\vartheta_{\lbe v}}&\widehat{T}_{\lbe v}(F_{v})
\ar[d]^{\widehat{\vartheta}_{v}}&\\
0\ar[r] & \Phi^{\e\prime}_{v}(k(v))\ar[r]&
\Phi_{v}(k(v))\ar[r]&\widehat{\Phi}_{v}(k(v))\ar[r]&0. }
$$
(For the exactness of the bottom row, see \cite{BL}, p.288, line 2).
By \cite{BL}, Proposition 3.2, the map $\vartheta_{\be
v}^{\e\prime}$ is surjective. We conclude that there exists a
canonical isomorphism
$$
C_{\e T,\e F_{v}}=\cok[\e
T(F_{v})\ra\widehat{\Phi}_{v}(k(v))\e],
$$
where the map involved is the composite $T(F_{v})\ra
\widehat{T}_{\lbe
v}(F_{v})\overset{\widehat{\vartheta}_{v}}\longrightarrow
\widehat{\Phi}_{v}(k(v))$. In particular, $C_{\e T,\e F_{v}}=0$
if $v\notin B$.

Now, since $T_{K}$ has multiplicative reduction at $w_{v}$ by
assumption, the map $\vartheta_{\lbe w_{v}}\colon T(K_{w_{v}})\ra
\Phi_{w_{v}}(k(w_{v}))$ is surjective. It follows that there exists
a canonical exact commutative diagram\footnote{The commutativity of
the left-hand rectangle generalizes the formula ``$\,\text{ord}_{\e
w_{v}}\!(x)=e_{v}\e\text{ord}_{\e v}(x)$ ($x\in F_{v}^{\e *}\,$)".
Compare \eqref{d1} with the diagram in \cite{CAmb}, proof of Lemma
2.3.}
\begin{equation}\label{d1}
\xymatrix{T(F_{v})\ar[r]^{\vartheta_{\lbe v}}\ar@{=}[d]&\Phi_{v}(k(v))\ar[d]^{\delta_{v}}\ar[r]&
C_{\e T,\e F_{v}}\ar[r]\ar[d]&0\\
T(F_{v})\ar[r]^(.38){\tilde{\vartheta}_{\lbe
w_{v}}}&\Phi_{w_{v}}\be(k(w_{v}))^{G(w_{v})} \ar[r]^(.45){\partial_{
w_{v}}} & H^{1}(G_{w_{v}},\tto(\s O_{w_{v}}))^{\e\prime}\ar[r]&0,}
\end{equation}
where the bottom row is part of the $G_{w_{v}}$-cohomology sequence
induced by the exact sequence $0\ra \tto(\s O_{w_{v}})\ra
T(K_{w_{v}})\overset{\vartheta_{ w_{\lbe v}}}\longrightarrow
\Phi_{w_{v}}\be(k(w_{v}))\ra 0$ and
\begin{equation}\label{h1}
H^{1}(G_{w_{v}},\tto(\s
O_{w_{v}}))^{\e\prime}:=\krn\!\be\left[H^{1}(G_{w_{v}},\tto(\s
O_{w_{v}}))\ra H^{1}(G_{w_{v}},T(K_{w_{v}}))\right].
\end{equation}
The map $\tilde{\vartheta}_{\lbe w_{v}}$ is the restriction of
$\vartheta_{\lbe w_{v}}$ to $T(F_{v})\subset T(K_{w_{v}})$ and
$\partial_{\e w_{v}}$ is induced by the connecting homomorphism
$\Phi_{w_{v}}\be(k(w_{v}))^{G(w_{v})}\ra H^{1}(G_{w_{v}},\tto(\s
O_{w_{v}}))$. Let
\begin{equation}\label{cbar}
\overline{C}_{\e T,\e F_{v}}=\cok[\,\krn\delta_{v}\ra C_{\e T,\e
F_{v}}\e],
\end{equation}
where the map involved is induced by the projection
$\Phi_{v}(k(v))\ra C_{\e T,\e F_{v}}$. Applying the
snake lemma to the diagram which is derived from \eqref{d1} by
replacing both instances of $T(F_{v})$ there by their images in
their target groups, we obtain the following generalization of
\cite{CAmb}, Lemma 2.3.

\begin{proposition} Assume that $T_{\be K}$ has
multiplicative reduction over $\ut$. Then there exists a canonical
exact sequence
$$
0\ra\displaystyle\bigoplus_{v\in B\e\setminus\e S}\!\overline{C}_{\e T,\e F_{v}}\ra \displaystyle\bigoplus_{v\notin S}H^{1}(G_{w_{v}},\tto(\s
O_{w_{v}}))^{\e\prime}\overset{\oplus\e \psi_{v}}\longrightarrow\displaystyle\bigoplus_{v\notin S}\cok\delta_{v}\ra
0,
$$
where the groups $\overline{C}_{\e T,\e F_{v}}$ and
$H^{1}(G_{w_{v}},\tto(\s O_{w_{v}}))^{\e\prime}$ are given by
\eqref{cbar} and \eqref{h1} and the map $\psi_{v}$ is defined as
follows: if $\e\xi_{w_{v}}\in H^{1}(G_{w_{v}},\tto(\s
O_{w_{v}}))^{\e\prime}$, then $\psi_{v}(\xi_{w_{v}})$ is the element
of $\e\cok\delta_{v}$ which is represented by any element of
$\e\partial_{\e w_{v}}^{\e -1}(\xi_{w_{v}})$, where $\partial_{\e
w_{v}}$ is the connecting homomorphism appearing in diagram
\eqref{d1}.\qed
\end{proposition}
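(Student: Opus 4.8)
The plan is to apply the snake lemma to the commutative diagram obtained from \eqref{d1} by passing to the images of the two copies of $T(F_{v})$, exactly as the statement instructs, and then to assemble the resulting local exact sequences into the global one. First I would set $\img\vartheta_{\lbe v}\subseteq\Phi_{v}(k(v))$ and $\img\tilde{\vartheta}_{\lbe w_{v}}\subseteq\Phi_{w_{v}}\be(k(w_{v}))^{G(w_{v})}$ and record that, after this replacement, the top row of \eqref{d1} becomes $0\ra\img\vartheta_{\lbe v}\ra\Phi_{v}(k(v))\ra C_{\e T,\e F_{v}}\ra 0$ and the bottom row becomes $0\ra\img\tilde{\vartheta}_{\lbe w_{v}}\ra\Phi_{w_{v}}\be(k(w_{v}))^{G(w_{v})}\ra H^{1}(G_{w_{v}},\tto(\s O_{w_{v}}))^{\e\prime}\ra 0$, both of which are short exact by the surjectivity statements already established (the surjectivity of $\partial_{\e w_{v}}$ onto $H^{1}(G_{w_{v}},\tto(\s O_{w_{v}}))^{\e\prime}$ comes from the multiplicative-reduction hypothesis on $T_{K}$, which makes $\vartheta_{\lbe w_{v}}$ surjective). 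The vertical maps are $\tilde{\vartheta}_{\lbe w_{v}}\circ(\vartheta_{\lbe v})^{-1}$ on the left, $\delta_{v}$ in the middle, and the induced map on the right.

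Next I would read off the snake sequence. The key observation is that the left-hand vertical map is an isomorphism: the commutativity of the left rectangle in \eqref{d1} expresses that $\tilde{\vartheta}_{\lbe w_{v}}=\delta_{v}\circ\vartheta_{\lbe v}$ on $T(F_{v})$, so the map $\img\vartheta_{\lbe v}\ra\img\tilde{\vartheta}_{\lbe w_{v}}$ is exactly the map induced by $\delta_{v}$ on images, which is surjective by definition of $\img\tilde{\vartheta}_{\lbe w_{v}}$ and injective because $\delta_{v}$ restricted there is the factored valuation map. Consequently its kernel and cokernel vanish, and the snake lemma collapses to a four-term exact sequence
$$
0\ra\krn\delta_{v}\ra C_{\e T,\e F_{v}}\overset{\psi_{v}}\longrightarrow H^{1}(G_{w_{v}},\tto(\s O_{w_{v}}))^{\e\prime}\ra\cok\delta_{v}\ra 0,
$$
where $\krn\delta_{v}$ and $\cok\delta_{v}$ are the kernel and cokernel of the middle vertical map and the connecting-like map $\psi_{v}$ sends a class in $C_{\e T,\e F_{v}}$ to the image under $\partial_{\e w_{v}}$ of any lift to $\Phi_{v}(k(v))$, which matches the description of $\psi_{v}$ in the statement. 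Splitting off the first map identifies $\img[\krn\delta_{v}\ra C_{\e T,\e F_{v}}]$ and yields the short exact sequence $0\ra\overline{C}_{\e T,\e F_{v}}\ra H^{1}(G_{w_{v}},\tto(\s O_{w_{v}}))^{\e\prime}\overset{\psi_{v}}\longrightarrow\cok\delta_{v}\ra 0$ with $\overline{C}_{\e T,\e F_{v}}$ as defined in \eqref{cbar}.

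Finally I would globalize. For $v\notin B$ we have $C_{\e T,\e F_{v}}=0$, hence $\overline{C}_{\e T,\e F_{v}}=0$, so only the primes $v\in B\setminus S$ contribute to the first term; taking the direct sum over all $v\notin S$ of the local short exact sequences then produces the asserted sequence, since direct sums are exact. The step I expect to require the most care is verifying that the left-hand vertical map on images is genuinely an isomorphism, since this is what makes the snake sequence four-term rather than six-term; this rests entirely on the commutativity of the left rectangle of \eqref{d1} (the generalized formula $\mathrm{ord}_{\e w_{v}}=e_{v}\,\mathrm{ord}_{\e v}$) together with the definitions of the image subgroups, so the "obstacle" is really just bookkeeping rather than a substantive difficulty. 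The identification of $\psi_{v}$ with the map described via $\partial_{\e w_{v}}^{\e-1}$ is immediate from tracing the snake-lemma connecting map through the diagram.
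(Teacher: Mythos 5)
Your route is the paper's own --- the paper's entire proof consists of applying the snake lemma to the diagram obtained from \eqref{d1} by replacing both copies of $T(F_{v})$ by their images --- but your ``key observation'' is false, and it fails exactly at the primes that give the proposition its content. The induced map $\alpha_{v}\colon\img\vartheta_{\lbe v}\ra\img\tilde{\vartheta}_{\lbe w_{v}}$ is indeed surjective, since $\tilde{\vartheta}_{\lbe w_{v}}=\delta_{v}\circ\vartheta_{\lbe v}$; but its kernel is $\krn\delta_{v}\cap\img\vartheta_{\lbe v}$, and nothing forces this to vanish for $v\in B\setminus S$. (For $v\notin B$, and hence under the hypothesis $S\supset B$ of Theorem 4.6, $\delta_{v}$ is injective by Lemma 4.2, and there your claim is correct --- the ``factored valuation map'' heuristic is true for $T=\bg_{m}$ and more generally for multiplicative reduction, which is perhaps what misled you.) Concretely, let $T=R^{(1)}_{K/F}(\bg_{m,K})$ be the norm-one torus of a quadratic extension $K/F$ and let $v\notin S$ be a prime of odd residue characteristic that ramifies in $K$, say with $K_{w_{v}}=F_{v}(\sqrt{\pi})$; then $T_{K}$ is split, so the hypothesis of the proposition holds, while $v\in B\setminus S$. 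Identifying $T(K_{w_{v}})$ with $K_{w_{v}}^{*}$ (on which $\sigma$ acts by $x\mapsto\sigma(x)^{-1}$), any $x\in T(F_{v})$ satisfies $x=\sigma(x)^{-1}$, whence $\text{ord}_{w_{v}}(x)=-\text{ord}_{w_{v}}(x)=0$; thus $\tilde{\vartheta}_{\lbe w_{v}}=0$ and $\img\tilde{\vartheta}_{\lbe w_{v}}=0$. On the other hand, the N\'eron model of $T$ over $\s O_{v}$ is the smooth scheme $\{a^{2}-\pi b^{2}=1\}$, whose special fibre has the two components $a\equiv\pm 1$, both meeting $T(F_{v})$ (take $x=\pm 1$); hence $\vartheta_{\lbe v}$ is surjective and $\img\vartheta_{\lbe v}=\Phi_{v}(k(v))\cong\bz/2$, so $\alpha_{v}$ is the non-injective map $\bz/2\ra 0$. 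Moreover $\delta_{v}=0$ (because $\delta_{v}\circ\vartheta_{\lbe v}=0$ with $\vartheta_{\lbe v}$ surjective), so $\krn\delta_{v}=\bz/2$ while $C_{\e T,\e F_{v}}=\cok\vartheta_{\lbe v}=0$, and your four-term sequence would begin $0\ra\bz/2\ra 0$, which is not exact.

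The proposition itself is untouched by this, and your argument is repaired by a one-line change: surjectivity of $\alpha_{v}$ is all the snake lemma needs. It kills $\cok\alpha_{v}$, so the six-term snake sequence yields $\krn\gamma_{v}=\img\be\left[\krn\delta_{v}\ra C_{\e T,\e F_{v}}\right]$ (the image, not $\krn\delta_{v}$ itself) and $\cok\gamma_{v}\cong\cok\delta_{v}$, where $\gamma_{v}\colon C_{\e T,\e F_{v}}\ra H^{1}(G_{w_{v}},\tto(\s O_{w_{v}}))^{\e\prime}$ is the right-hand vertical map (this, incidentally, is the map you momentarily labelled $\psi_{v}$ in your four-term display; the proposition's $\psi_{v}$ is the one in your final short exact sequence, going the other way). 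Since \eqref{cbar} defines $\overline{C}_{\e T,\e F_{v}}$ as the cokernel of a map that is \emph{not} required to be injective, one still obtains $\krn\psi_{v}=\img\gamma_{v}\cong C_{\e T,\e F_{v}}\big/\img\be\left[\krn\delta_{v}\ra C_{\e T,\e F_{v}}\right]=\overline{C}_{\e T,\e F_{v}}$, giving the local short exact sequence; your identification of $\psi_{v}$ through $\partial_{w_{v}}^{\e -1}$, the vanishing of $\overline{C}_{\e T,\e F_{v}}$ for $v\notin B$, and the passage to direct sums are all correct as written. In short: delete the isomorphism claim, keep only surjectivity, and replace the first term of your four-term sequence by the image of $\krn\delta_{v}$ in $C_{\e T,\e F_{v}}$; after that your proof coincides with the paper's.
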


Now there exists an exact commutative diagram
\begin{equation}\label{r}
\xymatrix{0\ar[r] & T(F)/\e{\s T}^{\e\circ}(U)\ar[d]^{\gamma}\ar[r]
& \displaystyle\bigoplus_{v\notin
S}\Phi_{v}(k(v))\ar@{->>}[r]\ar[d]^{\bigoplus_{v\notin
S}\delta_{v}} &
C_{\e T,F,S}\ar[d]^{j_{\e T,K/F,S}^{\e\prime}}\\
0\ar[r]
&\big(T(K)/\e\tto\be\big(\ut\big)\big)^{G}\ar[r]^(.47){\overline{\vartheta}_{_{\be S}}}
& \displaystyle\bigoplus_{v\notin
S}\Phi_{w_{v}}\be(k(w_{v}))^{G(w_{v})}\ar@{->>}[r] & (C_{\e
T,K,S_{K}}^{\e G})_{{\text{trans}}}}
\end{equation}
where $\gamma$ is induced by the inclusion $T(F)\hookrightarrow
T(K)$, $j_{\e T,K/F,S}^{\e\prime}$ is the map \eqref{j'} and
$\overline{\vartheta}_{_{\be S}}$ is induced by \eqref{theta}. By
\cite{mrl}, proof of Lemma 3.7,
$$
\krn\gamma=\e\tto\be\big(\ut\big)^{G}\be/{\s T}^{\e\circ}(U).
$$
Thus, recalling that $\krn j_{\e T,K/F,S}^{\e\prime}=\krn j_{\e
T,K/F,S}$ and using Lemma 4.2, the snake lemma applied to \eqref{r}
yields both an exact sequence
\begin{equation}\label{e1}
\begin{array}{rcl}
0\ra\e\tto\be\big(\ut\big)^{G}\be/{\s T}^{\e\circ}(U)\ra\displaystyle\bigoplus_{v\in B\e\setminus\e
S}H^{\e 1}\!\be\left(I_{w_{v}},T\!\left(K_{w_{v}}
^{\le{\rm{nr}}}\right)\right)^{G_{k(\lbe v)}}\!\! &\ra&\!\krn j_{\e T,K/F,S}\\
&\ra&\krn\overline{\vartheta}_{_{\be S}}\ra 0
\end{array}
\end{equation}
and an isomorphism
\begin{equation}\label{cok}
\cok\,\overline{\vartheta}_{_{\be S}}^{\,\prime}=\cok j_{\e T,\e
K/F,\e S}^{\,\prime}\,,
\end{equation}
where
\begin{equation}\label{p}
\overline{\vartheta}_{_{\be S}}^{\,\prime}\colon\cok\gamma\ra\displaystyle
\bigoplus_{v\notin S}\cok\delta_{v}.
\end{equation}
is induced by $\overline{\vartheta}_{_{\be S}}$.

To describe \eqref{p}, we first note that \eqref{p1} induces an
isomorphism
$$
\overline{\partial}_{\e
1}\colon\cok\gamma\overset{\sim}\longrightarrow
H^{1}\ng\big(G,\tto\be(\ut)\big)^{\e\prime}
$$
where
$$
H^{1}\ng\big(G,\tto\be(\ut)\big)^{\e\prime}:=\krn\!\left[H^{1}\ng
\big(G,\tto\be(\ut)\big)\ra H^{1}(G,T(K))\right].
$$
Let
\begin{equation}\label{loc}
\lambda_{S}\colon\displaystyle\bigoplus_{v\notin S}H^{1}\ng\big(G,\tto\be(\ut)\big)^{\e\prime}\ra\displaystyle\bigoplus_{v\notin S}
H^{1}(G_{w_{v}},\tto(\s O_{w_{v}}))^{\e\prime}
\end{equation}
be the natural localization map. Then there exists a commutative diagram
$$
\xymatrix{\cok\gamma\ar[d]^(.45){\overline{\partial}_{\e
1}}_(.45){\simeq}\ar[r]^(.45){\overline{\vartheta}_{_{\be S}}^{\,\prime}}&
\displaystyle\bigoplus_{v\notin S}\cok\delta_{v}\\
H^{1}\ng\big(G,\tto\be(\ut)\big)^{\e\prime}\ar[r]^(.4){\lambda_{S}}&
\displaystyle\bigoplus_{v\notin S} H^{1}(G_{w_{v}},\tto(\s
O_{w_{v}}))^{\e\prime}\ar@{->>}[u]_{\oplus_{v\notin S}\,\psi_{v}},}
$$
where the maps $\psi_{v}$ are defined in the statement of
Proposition 4.4. In other words, $\overline{\vartheta}_{_{\be
S}}^{\,\prime}=\oplus_{v\notin S}\e \psi_{
v}\e\circ\e\lambda_{S}\e\circ\e\overline{\partial}_{\e 1}$. This may
be checked by using the definitions of $\lambda_{S}$ and $\psi_{v}$
and the general description of the connecting homomorphisms
$\partial_{\e w_{v}}$ (from diagram \eqref{d1}) and \eqref{p1}. Now
write
$$
\krn\lambda_{S}=\Sha^{\e
1}_{S}\big(G,\tto\be(\ut)\big)^{\prime}
$$
and
$$
\cok\lambda_{S}=\Tha^{1}_{S}\big(G,\tto\be(\ut)\big)^{\e\prime}.
$$
Then
$$
\Sha^{\e
1}_{S}\big(G,\tto\be(\ut)\big)^{\prime}=\krn\!\!\left[\!\Sha^{\e
1}_{S}\big(G,\tto\lbe(\ut)\big)\ra\Sha^{\e
1}_{S}\big(G,T(K))\right],
$$
where
$$
\Sha^{\e
1}_{S}\big(G,\tto\lbe(\ut)\big)=\krn\!\!\left[\,H^{1}\ng\big(G,\tto\lbe(\ut)\big)\ra
\displaystyle\bigoplus_{v\notin S} H^{1}(G_{w_{v}},\tto(\s
O_{w_{v}}))\right]
$$
and
$$
\Sha^{\e 1}_{S}\big(G,T(K))=\krn\!\!\left[\,H^{1}(G,T(K))\ra
\displaystyle\bigoplus_{v\notin S}
H^{1}(G_{w_{v}},T(K_{w_{v}}))\right].
$$
Applying the snake lemma to the exact commutative diagram
$$
\xymatrix{0\ar[d]\ar[r]&H^{1}\ng\big(G,\tto\lbe(\ut)\big)^{\e\prime}\ar[d]^(.4){\lambda_{S}}\ar[r]^(.6){\overline{\partial}_{1}^{\,-1}}&
\cok\gamma\ar[d]^(.4){\overline{\vartheta}_{_{\be S}}^{\,\prime}}\ar[r]&0\\
\displaystyle\bigoplus_{v\in B\e\setminus\e S}\!\overline{C}_{\e
T,\e F_{v}}\ar@{^{(}->}[r]&\displaystyle\bigoplus_{v\notin S}
H^{1}(G_{w_{v}},\tto(\s
O_{w_{v}}))\ar[r]^(.6){\oplus\e\psi_{v}}&\displaystyle\bigoplus_{v\notin
S}\cok\delta_{v}\ar[r]&0}
$$
and using \eqref{cok}, we obtain the following exact sequence
\begin{equation}
\begin{array}{rcl}\label{e2}
0\ra\Sha^{\e 1}_{S}\big(G,\tto\lbe(\ut)\big)^{\prime}\ra
\krn\overline{\vartheta}_{_{\be S}}^{\,\prime}\ra\displaystyle
\bigoplus_{v\in B\e\setminus\e S}\!\overline{C}_{\e T,\e
F_{v}}&\!\!\ra\!\!&\!\!\!\!\Tha^{\e
1}_{S}\big(G,\tto\lbe(\ut)\big)^{\prime}\\
&\ra&\!\!\!\cok j_{\e T,K/F,S}^{\e\prime}\ra 0.
\end{array}
\end{equation}
We collect together \eqref{e1} and \eqref{e2} in the following
statement.
\begin{proposition} Assume that $T_{K}$ has multiplicative reduction over $\ut$. Then there exist canonical exact sequences
$$
\begin{array}{rcl}
0\ra\e\tto\lbe\big(\ut\big)^{G}\be/{\s
T}^{\e\circ}(U)\ra\displaystyle\bigoplus_{v\in B\e\setminus\e
S}H^{\e 1}\!\be\left(I_{w_{v}},T\!\left(K_{w_{v}}
^{\le{\rm{nr}}}\right)\right)^{G_{k(\lbe v)}}\!\! &\ra&\!\krn j_{\e T,\e K/F,\e S}\\
&\ra&\krn\overline{\vartheta}_{_{\be S}}^{\,\prime}\ra 0
\end{array}
$$
and
$$
\begin{array}{rcl}
0\ra\Sha^{\e 1}_{S}\big(G,\tto\lbe(\ut)\big)^{\prime}\ra
\krn\overline{\vartheta}_{_{\be S}}^{\,\prime}\ra\displaystyle
\bigoplus_{v\in B\e\setminus\e S}\!\overline{C}_{\e T,\e
F_{v}}&\!\!\ra\!\!&\!\!\!\!\Tha^{\e
1}_{S}\big(G,\tto\lbe(\ut)\big)^{\prime}\\
&\ra&\!\!\!\cok j_{\e T,K/F,S}^{\e\prime}\ra 0,
\end{array}
$$
where $\overline{\vartheta}_{_{\be S}}^{\,\prime}$ is the map
\eqref{p}, $\!\Sha^{\e 1}_{S}\big(G,\tto\lbe(\ut)\big)^{\prime}$
(resp., $\!\!\Tha^{\e 1}_{S}\big(G,\tto\lbe(\ut)\big)^{\prime}$) is
the kernel (resp., cokernel) of the natural localization map
\eqref{loc} and the groups $\overline{C}_{\e T,\e F_{v}}$ are given
by \eqref{cbar}.\qed
\end{proposition}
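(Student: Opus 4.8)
The plan is to recover the two asserted sequences as the output of two successive snake-lemma arguments, applied to the exact commutative diagrams assembled above; in fact they are nothing but the sequences \eqref{e1} and \eqref{e2}. The common input is the four-term localization sequence of Proposition 4.1, which I would first break into the two short exact sequences \eqref{seq1} and its companion, so as to produce the connecting maps \eqref{p1} and \eqref{p2} together with the variant capitulation map $j_{\e T,K/F,S}^{\e\prime}$ of \eqref{j'}, which satisfies $\krn j_{\e T,K/F,S}^{\e\prime}=\krn j_{\e T,K/F,S}$. Using Shapiro's lemma and the triviality of the $I_{w_{v}}$-action on $\Phi_{w_{v}}(k(w_{v}))$ I would then replace the $G$-cohomology of $\bigoplus_{w\mid v}\Phi_{w}(k(w))$ by the $G_{w_{v}}$-cohomology of $\Phi_{w_{v}}(k(w_{v}))$, which is what makes the local maps $\delta_{v}$ of \eqref{loc cap} and the diagram \eqref{r} available.

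For the first sequence I would apply the snake lemma to \eqref{r}, whose three vertical arrows are $\gamma$, $\oplus_{v\notin S}\delta_{v}$ and $j_{\e T,K/F,S}^{\e\prime}$. The relevant segment of the resulting sequence is $0\ra\krn\gamma\ra\krn(\oplus_{v\notin S}\delta_{v})\ra\krn j_{\e T,K/F,S}^{\e\prime}\ra\krn\overline{\vartheta}_{_{\be S}}^{\,\prime}\ra 0$, and I would evaluate its first two terms by means of the identification $\krn\gamma=\tto(\ut)^{G}/\sto(U)$ from \cite{mrl} and the computation of $\krn(\oplus_{v\notin S}\delta_{v})$ in Lemma 4.2; combined with $\krn j_{\e T,K/F,S}^{\e\prime}=\krn j_{\e T,K/F,S}$ this is exactly \eqref{e1}. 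The tail of the same snake sequence furnishes the isomorphism $\cok\overline{\vartheta}_{_{\be S}}^{\,\prime}\cong\cok j_{\e T,K/F,S}^{\e\prime}$ recorded in \eqref{cok}, which I would set aside for use at the very end.

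For the second sequence I would first identify the source $\cok\gamma$ of $\overline{\vartheta}_{_{\be S}}^{\,\prime}$ with $H^{1}\ng\big(G,\tto(\ut)\big)^{\e\prime}$ through the isomorphism $\overline{\partial}_{\e 1}$ induced by \eqref{p1}, and then establish the factorization $\overline{\vartheta}_{_{\be S}}^{\,\prime}=(\oplus_{v\notin S}\psi_{v})\circ\lambda_{S}\circ\overline{\partial}_{\e 1}$ through the localization map \eqref{loc}. With this in place I would apply the snake lemma to the second commutative diagram, whose top row is the isomorphism $\overline{\partial}_{\e 1}^{\,-1}$ transporting $\lambda_{S}$ to $\overline{\vartheta}_{_{\be S}}^{\,\prime}$, and whose bottom row is the short exact sequence of Proposition 4.4. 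Since $\krn\lambda_{S}=\Sha^{\e 1}_{S}\big(G,\tto(\ut)\big)^{\prime}$ and $\cok\lambda_{S}=\Tha^{\e 1}_{S}\big(G,\tto(\ut)\big)^{\prime}$, the resulting exact sequence is \eqref{e2}, its final term being rewritten as $\cok j_{\e T,K/F,S}^{\e\prime}$ by \eqref{cok}. Collecting \eqref{e1} and \eqref{e2} then gives the proposition.

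The main obstacle, and the only point that is not purely formal, will be the factorization $\overline{\vartheta}_{_{\be S}}^{\,\prime}=(\oplus_{v\notin S}\psi_{v})\circ\lambda_{S}\circ\overline{\partial}_{\e 1}$, whose verification requires a careful comparison of the global connecting homomorphism \eqref{p1} (defining $\overline{\partial}_{\e 1}$) with the local boundary maps $\partial_{\e w_{v}}$ of diagram \eqref{d1}, together with the unwinding of the definitions of $\lambda_{S}$ and of the maps $\psi_{v}$ from Proposition 4.4. I would also emphasize that the standing hypothesis that $T_{K}$ has multiplicative reduction over $\ut$ enters twice: it makes $\vartheta_{\lbe w_{v}}$ surjective, hence the bottom row of \eqref{d1} exact, and it is precisely the hypothesis under which Proposition 4.4 computes $\cok(\oplus_{v\notin S}\delta_{v})$ in terms of the local groups $\overline{C}_{\e T,\e F_{v}}$ and $H^{1}(G_{w_{v}},\tto(\s O_{w_{v}}))^{\e\prime}$.
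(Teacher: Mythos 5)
Your proposal is correct and follows the paper's own proof essentially step for step: the snake lemma applied to diagram \eqref{r}, with $\krn\gamma=\tto\be\big(\ut\big)^{G}\be/{\s T}^{\e\circ}(U)$ from \cite{mrl} and Lemma 4.2, yields \eqref{e1} together with the isomorphism \eqref{cok}, and the snake lemma applied to the diagram transporting $\lambda_{S}$ to $\overline{\vartheta}_{_{\be S}}^{\,\prime}$ via $\overline{\partial}_{\e 1}$, with bottom row given by Proposition 4.4, yields \eqref{e2}. You also correctly isolate the factorization $\overline{\vartheta}_{_{\be S}}^{\,\prime}=(\oplus_{v\notin S}\e\psi_{v})\circ\lambda_{S}\circ\overline{\partial}_{\e 1}$ as the only non-formal verification, which is precisely the point the paper itself reduces to an unwinding of the definitions of $\lambda_{S}$, $\psi_{v}$ and the connecting homomorphisms.
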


The following immediate corollary of the proposition generalizes
\cite{CAmb}, Theorem 2.4 (note that, if $S\supset B$, then $T_{K}$
has multiplicative reduction over $\ut$ since $T$ has multiplicative
reduction over $U$):

\begin{theorem} Assume that $S\supset B$. Then there exists a canonical exact sequence
$$\begin{array}{rcl}
0\ra\krn j_{\e T,K/F,S}\ra
H^{1}\ng\big(G,\tto\lbe(\ut)\big)^{\e\prime}&\overset{\lambda_{S}}\longrightarrow&
\displaystyle\bigoplus_{v\notin S} H^{1}(G_{w_{v}},\tto(\s
O_{w_{v}}))^{\e\prime}\\\\
&\longrightarrow &\cok j_{\e T,K/F,S}^{\e\prime}\ra 0,
\end{array}
$$
where $\lambda_{S}$ is the canonical localization map
\eqref{loc}.\qed
\end{theorem}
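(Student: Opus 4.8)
The plan is to obtain Theorem 4.6 as the special case $S\supset B$ of Proposition 4.5, so the entire task reduces to simplifying the two exact sequences of the proposition under this stronger hypothesis. First I would observe, as the parenthetical remark in the statement already indicates, that when $S\supset B$ the torus $T$ has multiplicative reduction everywhere over $U$, hence $T_{K}$ has multiplicative reduction over $\ut$; this licenses the use of Proposition 4.5. The key simplifying consequence of $S\supset B$ is that the index set $B\setminus S$ is \emph{empty}. Therefore the direct sums $\bigoplus_{v\in B\setminus S}H^{1}\!\bigl(I_{w_{v}},T(K_{w_{v}}^{\mathrm{nr}})\bigr)^{G_{k(v)}}$ and $\bigoplus_{v\in B\setminus S}\overline{C}_{\,T,\,F_{v}}$ appearing in the two sequences both vanish.

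Next I would feed these vanishings back into the two exact sequences of Proposition 4.5. In the first sequence, the term $\bigoplus_{v\in B\setminus S}H^{1}(I_{w_{v}},\dots)^{G_{k(v)}}$ is zero, so the sequence collapses to force $\tto(\ut)^{G}/\sto(U)=0$ and an isomorphism $\krn j_{\,T,K/F,S}\overset{\sim}{\to}\krn\overline{\vartheta}_{_{S}}^{\,\prime}$. In the second sequence, the vanishing of $\bigoplus_{v\in B\setminus S}\overline{C}_{\,T,\,F_{v}}$ forces an isomorphism $\krn\overline{\vartheta}_{_{S}}^{\,\prime}\overset{\sim}{\to}\Sha^{1}_{S}(G,\tto(\ut))^{\prime}$ together with an isomorphism $\Tha^{1}_{S}(G,\tto(\ut))^{\prime}\overset{\sim}{\to}\cok j_{\,T,K/F,S}^{\,\prime}$. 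Composing the two isomorphisms obtained for the kernel gives $\krn j_{\,T,K/F,S}\cong\Sha^{1}_{S}(G,\tto(\ut))^{\prime}=\krn\lambda_{S}$, while the cokernel statement identifies $\cok j_{\,T,K/F,S}^{\,\prime}$ with $\cok\lambda_{S}$. Recalling the definitions $\krn\lambda_{S}=\Sha^{1}_{S}(G,\tto(\ut))^{\prime}$ and $\cok\lambda_{S}=\Tha^{1}_{S}(G,\tto(\ut))^{\prime}$ from just before the proposition, these two identifications are exactly the exactness at the two ends of the asserted four-term sequence built from $\lambda_{S}$.

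Finally I would assemble the pieces into the single displayed exact sequence of the theorem. The localization map $\lambda_{S}\colon H^{1}(G,\tto(\ut))^{\prime}\to\bigoplus_{v\notin S}H^{1}(G_{w_{v}},\tto(\mathcal O_{w_{v}}))^{\prime}$ has kernel $\krn j_{\,T,K/F,S}$ (via the kernel isomorphisms above) and cokernel $\cok j_{\,T,K/F,S}^{\,\prime}$ (via the cokernel isomorphism), so the tautological four-term sequence
$$
0\ra\krn\lambda_{S}\ra H^{1}\ng\big(G,\tto\lbe(\ut)\big)^{\e\prime}\overset{\lambda_{S}}\longrightarrow\bigoplus_{v\notin S}H^{1}(G_{w_{v}},\tto(\s O_{w_{v}}))^{\e\prime}\ra\cok\lambda_{S}\ra 0
$$
becomes, after these substitutions, precisely the sequence in the statement. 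I do not anticipate a genuine obstacle here, since the result is a formal corollary; the only point requiring mild care is the bookkeeping verifying that the composite of the two kernel isomorphisms is compatible with the map $\lambda_{S}$ (equivalently, that the identification $\krn\overline{\vartheta}_{_{S}}^{\,\prime}\cong\krn\lambda_{S}$ respects the commutative square relating $\overline{\vartheta}_{_{S}}^{\,\prime}$ and $\lambda_{S}$ via $\overline{\partial}_{1}$). This compatibility is immediate from the commutative diagram displayed just before the definitions of $\krn\lambda_{S}$ and $\cok\lambda_{S}$, in which $\overline{\partial}_{1}$ is an isomorphism and $\oplus_{v\notin S}\psi_{v}$ is surjective, so the diagram chase is entirely routine.
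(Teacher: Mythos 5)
Your proposal is correct and follows essentially the same route as the paper, which presents Theorem 4.6 as an immediate corollary of Proposition 4.5: since $S\supset B$ makes $B\setminus S$ empty, both direct sums over $B\setminus S$ vanish, collapsing the two exact sequences to the identifications $\krn j_{\e T,K/F,S}\cong\krn\overline{\vartheta}_{_{\be S}}^{\,\prime}\cong\Sha^{\e 1}_{S}\big(G,\tto\lbe(\ut)\big)^{\prime}=\krn\lambda_{S}$ and $\cok j_{\e T,K/F,S}^{\e\prime}\cong\Tha^{\e 1}_{S}\big(G,\tto\lbe(\ut)\big)^{\prime}=\cok\lambda_{S}$, which assemble into the stated four-term sequence. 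Your compatibility check via the factorization $\overline{\vartheta}_{_{\be S}}^{\,\prime}=\oplus_{v\notin S}\,\psi_{v}\circ\lambda_{S}\circ\overline{\partial}_{\e 1}$ is precisely the commutative diagram the paper records before Proposition 4.5, so nothing is missing.
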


\begin{remark} If $S\supset B$, then Proposition 4.4 shows that
$$
\displaystyle\bigoplus_{\,v\notin S} H^{1}(G_{w_{v}},\tto(\s
O_{w_{v}}))^{\e\prime}=\displaystyle\bigoplus_{v\notin S}\cok\delta_{v}.
$$
Now, for each $v\notin S$, $\cok\delta_{v}$ is canonically
isomorphic to $(\bz/e_{v})^{d_{v}}$, where $d_{v}$ is the dimension
of the largest split subtorus of $T_{F_{v}}$ (see \cite{mrl},
p.1157). Thus the theorem shows that $[\cok j_{\e
T,K/F,S}^{\e\prime}]$ divides $\prod_{\,v\notin S}e_{v}^{\e d_{v}}$.
\end{remark}

When $T$ is an {\it invertible} $F$-torus, $\krn j_{\e T,K/F,S}$
admits a simple description independently of the hypothesis
$S\supset B$ (see Proposition 4.9 below).

\begin{lemma} Let $T$ be an invertible $F$-torus and let $v\notin S$. Then the following hold.
\begin{enumerate}
\item[(a)] $H^{1}(F,T\e)=0$.
\item[(b)] $R^{1}j_{*}T=0$ for the smooth topology on $U$.
\item[(c)] $\Phi_{v}\big(\e\overline{k(v)}\e\big)$ is torsion-free.
\item[(d)] $H^{1}(k(v),\Phi_{v}\be)=0$.
\item[(e)] $\overline{C}_{\le T,\e v}=0$.
\end{enumerate}
\end{lemma}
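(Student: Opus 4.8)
The plan is to establish the five statements in a logical order, deriving the later parts from the earlier ones together with the definitions recalled in Section~2. The foundational fact is part~(a): since $T$ is invertible, its character module $X$ is a direct factor of a permutation module, hence $T$ is a direct factor of a quasi-trivial torus $Q=\prod_{i}R_{L_{i}/F}(\bg_{m,L_{i}})$. By Shapiro's lemma $H^{1}(F,Q)=\prod_{i}H^{1}(L_{i},\bg_{m})=0$ by Hilbert's Theorem~90, and since $H^{1}(F,-)$ is additive on direct sums, $H^{1}(F,T\e)$ is a direct factor of $0$, hence vanishes. The same argument works verbatim over any field, so in particular $H^{1}\!\big(F_{v}^{\e\mathrm{nr}},T\big)=0$ and $H^{1}(k(v),-)$ of the relevant permutation-induced modules will vanish, which is what feeds parts~(c)--(e).

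Next I would treat part~(b). Because $T$ is a direct factor of a quasi-trivial torus $Q$, it suffices to prove $R^{1}j_{*}Q=0$ and then split off the factor, using that $R^{1}j_{*}$ commutes with the finite direct sum and with the splitting idempotents. For $Q=\prod R_{L_{i}/F}(\bg_{m})$, Weil restriction along a finite étale map commutes with $j_{*}$ and with higher direct images in the expected way, so the computation reduces to $R^{1}j_{*}\bg_{m}$ over each $L_{i}$, where the relevant stalk at a closed point is a strictly henselian local ring and $H^{1}(\bg_{m})$ of such a ring vanishes (Hilbert~90 over a strictly henselian local ring); hence the stalks of $R^{1}j_{*}Q$ all vanish and the sheaf is $0$.

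Parts~(c) and~(d) are then local statements about the component group $\Phi_{v}$ at a prime $v\notin S$. Since $T$ is invertible, it is in particular flasque, and for a flasque torus the group of connected components is torsion-free: concretely, $\Phi_{v}\big(\e\overline{k(v)}\e\big)$ can be identified (via the theory recalled around Lemma~4.2 and the references \cite{X},\cite{BL}) with a group built from $X^{I_{\overline{v}}}$ modulo norms, and the flasque condition $\widehat{H}^{-1}(H,X)=0$ forces this quotient to be torsion-free, giving~(c). For~(d), $\Phi_{v}$ is torsion-free and finitely generated, hence a permutation-type $G_{k(v)}$-module up to the flasque/coflasque dichotomy; invertibility of $X$ transfers to invertibility (hence coflasqueness) of $\Phi_{v}\big(\e\overline{k(v)}\e\big)$, and coflasque modules satisfy $\widehat{H}^{1}=0$, so $H^{1}(k(v),\Phi_{v})=0$. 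The main obstacle I anticipate is exactly this step: making precise the Galois-module structure of $\Phi_{v}$ and proving it inherits invertibility (or at least torsion-freeness together with the vanishing of $H^{1}$) from $X$, since this requires combining the multiplicative-reduction analysis of Section~4 with the structural results on flasque and invertible modules from Section~2.

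Finally, part~(e) should follow formally from the earlier parts. Recall from \eqref{ct} and \eqref{cbar} that $\overline{C}_{\le T,\e v}$ is a cokernel built from $C_{\e T,\e F_{v}}=\cok[\,T(F_{v})\ra\Phi_{v}(k(v))\,]$ after dividing out $\krn\delta_{v}$. Using part~(d), the surjectivity of $\vartheta_{v}$ onto the $G(w_{v})$-fixed part can be read off from the long exact cohomology sequence of $0\ra\sto(\s O_{v})\ra T(F_{v})\ra\Phi_{v}(k(v))\ra 0$ whose next term is $H^{1}(k(v),\Phi_{v})=0$ is not quite the right sequence, so instead I would run the cohomology of $0\ra\sto(\s O_{v})\ra T(F_{v})\overset{\vartheta_{v}}\longrightarrow\Phi_{v}(k(v))\ra 0$ and use that the obstruction to surjectivity of $\vartheta_{v}$ lands in $H^{1}\big(I_{w_{v}},T(K_{w_{v}}^{\e\mathrm{nr}})\big)$-type groups that vanish by part~(a) applied over the maximal unramified extension. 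This forces $C_{\e T,\e F_{v}}=0$, and then $\overline{C}_{\le T,\e v}$, being a quotient of $C_{\e T,\e F_{v}}$, is also $0$, completing the proof.
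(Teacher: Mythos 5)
Your parts (a) and (b) follow the paper's opening move (reduce to a direct factor of a quasi-trivial torus), and (a) is correct as written. In (b) two details are off but repairable: the relevant morphism of integral models is finite flat, \emph{not} \'etale, at the ramified primes (a quasi-trivial torus $R_{L/F}(\bg_{m,L})$ has bad reduction exactly there, so ``Weil restriction along a finite \'etale map'' is not available), and your strictly-henselian stalk computation is the \'etale-topology argument, whereas the assertion concerns the \emph{smooth} topology on $U$ --- the paper sidesteps exactly this delicacy by citing \cite{B}, Theorem 4.2.2. The genuine failure is in (c): you derive torsion-freeness of $\Phi_{v}\big(\e\overline{k(v)}\e\big)$ from the flasque condition $\widehat{H}^{-1}(H,X)=0$, but the torsion of the component group is an $H^{1}$-type invariant --- per the identification used in Section 7, $\Phi_{v}\big(\e\overline{k(v)}\e\big)_{\text{tors}}=H^{1}(F_{\lbe v}^{\le\rm{nr}},T)$ --- and flasqueness constrains $\widehat{H}^{-1}$, not $\widehat{H}^{1}$; what is relevant is a coflasqueness-type condition (which invertible modules do satisfy, being both flasque and coflasque). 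If flasqueness alone forced torsion-freeness, Section 7 of the paper would collapse: the whole point of the apparatus there (the groups $C^{\e *}$, the quotients $\Phi_{v}(T_{1})/{\rm{tors}}$, the terms $H^{1}(F_{\lbe v}^{\le\rm{nr}},T_{1})^{G_{k(v)}}$ in \eqref{7.5}) is that a flasque torus $T_{1}$ can have component groups with nontrivial torsion. The correct fix is the remark you make in passing in your first paragraph and then abandon: invertibility is stable under base change, so part (a) over $F_{v}^{\e\rm{nr}}$ kills the torsion; alternatively do as the paper does and, for $T=R_{L/F}(\bg_{m,L})$, simply quote \cite{X}, Lemma 2.6.

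For (d) your key claim --- that invertibility of $X$ transfers to $\Phi_{v}\big(\e\overline{k(v)}\e\big)$ as a $G_{k(v)}$-module --- is true but is precisely the content to be proved, and you flag it as your ``main obstacle'' without discharging it; the intermediate assertion ``torsion-free and finitely generated, hence a permutation-type module up to the flasque/coflasque dichotomy'' is not an argument. The paper makes this step trivial by staying with $T=R_{L/F}(\bg_{m,L})$ throughout: by \cite{B}, p.34, $\Phi_{v}\big(\e\overline{k(v)}\e\big)=\bigoplus_{w\mid v}{\rm{Ind}}_{G_{k(v)}}^{G_{k(w)}}\,\bz$, whence $H^{1}(k(v),\Phi_{v})=\bigoplus_{w\mid v}H^{1}(k(w),\bz)=0$, and invertibility of $\Phi_{v}$ for a direct factor then comes for free. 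In (e) your patched argument does not close: the groups $H^{1}\!\be\left(I_{w_{v}},T\!\left(K_{w_{v}}^{\le{\rm{nr}}}\right)\right)$ you invoke control $\krn\delta_{v}$, equivalently the torsion of $\Phi_{v}$ (Lemma 4.2 and Remark 4.3), not the cokernel of $\vartheta_{v}$; their vanishing re-proves (c) but says nothing about surjectivity of $\vartheta_{v}$. The paper's route is direct: for $T=R_{L/F}(\bg_{m,L})$ the map $\vartheta_{v}$ is identified with $\prod_{w\mid v}{\rm{ord}}_{w}\colon\prod_{w\mid v}L_{w}^{*}\ra\prod_{w\mid v}\bz$, which is visibly surjective, so $C_{\e T,\e F_{v}}=0$ and a fortiori $\overline{C}_{\e T,\e F_{v}}=0$; surjectivity then passes to direct factors since $\vartheta_{v}$ is functorial and compatible with products. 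The structural lesson: carry the reduction to $R_{L/F}(\bg_{m,L})$ through all five parts rather than only (a)--(b), and the flasque/coflasque machinery becomes unnecessary.
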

\begin{proof} Since $T$ is a direct factor of a quasi-trivial $F$-torus, it
suffices to prove the lemma when $T=R_{L/F}(\bg_{m,L})$ for some finite separable extension $L/F$. In this case
(a), (b) and (c) follow from Hilbert's Theorem 90, \cite{B}, Theorem
4.2.2, p.78, and \cite{X}, Lemma 2.6, respectively. Now $T_{F_{v}}=\prod_{\, w\mid v} R_{\e L_{w}/F_{v}}(\bg_{m,\e L_{w}})$, where the
product extends over all primes $w$ of $L$ lying above $v$. Therefore, by \cite{B}, p.34, there exists an isomorphism of
$G_{k(v)}$-modules
$$
\Phi_{v}\big(\e\overline{k(v)}\e\big)=\bigoplus_{w\mid
v}\text{Ind}_{G_{k(v)}}^{G_{k(w)}}\,\bz.
$$
Thus $H^{\e i}\big(k(v),\Phi_{v})=\bigoplus_{\, w\mid v}\e H^{\e
i}\big(k(w),\bz)$ for all $i\geq 0$ and (d) follows. Further, the map
$\vartheta_{v}\colon
T(F_{v})\ra\Phi_{v}(k(v))$ may be identified with the map
$$
\prod_{w\mid v}\text{ord}_{w}\colon\displaystyle\prod_{w\mid v}L_{w}^{*}\ra\displaystyle\prod_{w\mid v}\bz,
$$
which is surjective. Thus $C_{\le T,\e v}=0$, whence (e) holds.
\end{proof}

\begin{proposition} Let $T$ be an invertible $F$-torus and assume that $T_{K}$
has multiplicative reduction over $\ut$. Then there exists a
canonical isomorphism
$$
\krn j_{\e T,K/F,S}=\Sha^{\e 1}_{S}\big(G,\tto\lbe(\ut)\big).
$$
\end{proposition}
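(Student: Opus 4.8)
The plan is to substitute the vanishing statements of Lemma 4.8 into the two exact sequences of Proposition 4.5 (whose running hypothesis, multiplicative reduction of $T_{K}$ over $\ut$, is exactly what we are assuming) and then to reconcile the \emph{primed} group $\Sha^{\e 1}_{S}\big(G,\tto\lbe(\ut)\big)^{\prime}$ produced there with the unprimed group in the statement. First I would dispose of the inertia terms. By Lemma 4.8(c), $\Phi_{v}\big(\e\overline{k(v)}\e\big)$ is torsion-free for every $v\notin S$, so its subgroup $\Phi_{v}(k(v))=\Phi_{v}\big(\e\overline{k(v)}\e\big)^{G_{k(\lbe v)}}$ has trivial torsion subgroup. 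Remark 4.3 then forces
$$
\displaystyle\bigoplus_{v\e\in\e B\e\setminus\e S}H^{\e 1}\!\be\left(I_{w_{v}},T\!\left(K_{w_{v}}^{\le{\rm{nr}}}\right)\right)^{G_{k(\lbe v)}}=0,
$$
so the first exact sequence of Proposition 4.5 collapses to an isomorphism $\krn j_{\e T,\e K/F,\e S}\overset{\sim}\ra\krn\overline{\vartheta}_{_{\be S}}^{\,\prime}$ (and incidentally forces $\tto(\ut)^{G}/\sto(U)=0$).

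Next I would kill the local terms in the second sequence. By Lemma 4.8(e), $\overline{C}_{\e T,\e F_{v}}=0$ for every $v\in B\setminus S$, so the second exact sequence of Proposition 4.5 degenerates to an isomorphism $\Sha^{\e 1}_{S}\big(G,\tto\lbe(\ut)\big)^{\prime}\overset{\sim}\ra\krn\overline{\vartheta}_{_{\be S}}^{\,\prime}$. Chaining this with the isomorphism from the first step yields
$$
\krn j_{\e T,\e K/F,\e S}\cong\Sha^{\e 1}_{S}\big(G,\tto\lbe(\ut)\big)^{\prime}.
$$
Up to this point everything is purely a matter of feeding vanishing statements into Proposition 4.5; the only remaining issue is to remove the prime.

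The last—and only genuinely non-formal—step is to check that $\Sha^{\e 1}_{S}\big(G,\tto\lbe(\ut)\big)^{\prime}=\Sha^{\e 1}_{S}\big(G,\tto\lbe(\ut)\big)$. By definition the former is the kernel of the map $\Sha^{\e 1}_{S}\big(G,\tto\lbe(\ut)\big)\ra\Sha^{\e 1}_{S}(G,T(K))$ induced by $\tto(\ut)\hookrightarrow T(K)$, so it suffices to prove $\Sha^{\e 1}_{S}(G,T(K))=0$, for which it is enough that $H^{1}(G,T(K))=0$. This is where invertibility of $T$ is essential and where I expect the crux of the argument to lie: writing $T(K)=T(\fb)^{G_{\be K}}$, the inflation map $H^{1}(G,T(K))\ra H^{1}(F,T)$ is injective by the degree-one inflation–restriction sequence for $G_{\lbe F}\twoheadrightarrow G$, while $H^{1}(F,T)=0$ by Lemma 4.8(a). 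Hence $H^{1}(G,T(K))=0$, so $\Sha^{\e 1}_{S}(G,T(K))=0$, the primed and unprimed groups coincide, and the identification $\krn j_{\e T,\e K/F,\e S}=\Sha^{\e 1}_{S}\big(G,\tto\lbe(\ut)\big)$ follows.
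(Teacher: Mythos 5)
Your proposal is correct and follows essentially the same route as the paper: the paper's proof likewise combines Lemma 4.8(c) with Remark 4.3 to kill the inertia terms, then feeds Lemma 4.8(e) and (a) into the two exact sequences of Proposition 4.5. Your write-up merely makes explicit the steps the paper leaves implicit, in particular that Lemma 4.8(a) together with inflation--restriction gives $H^{1}(G,T(K))=0$, which is exactly how the primed group $\Sha^{\e 1}_{S}\big(G,\tto\lbe(\ut)\big)^{\prime}$ is identified with the unprimed one.
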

\begin{proof} By part (c) of the lemma and Remark 4.3,
$$
\bigoplus_{v\e\in\e B\e\setminus\e
S}H^{\e 1}\!\be\left(I_{w_{v}},T\!\left(K_{w_{v}}
^{\le{\rm{nr}}}\right)\right)^{G_{k(\lbe v)}}=0.
$$
The proposition now follows from Proposition 4.5 using parts (a) and (e) of the lemma.
\end{proof}

We conclude this Section by establishing a generalization of
Dirichlet's Unit Theorem.

Let $V$ be the largest open subscheme of $U$ such that $\sto_{\e
V}:=j^{*}\sto$ is a torus, where $j\colon V\ra U$ is the canonical
inclusion. Then there exists a canonical exact sequence of smooth
sheaves on $U$
$$
0\ra j_{\e !}\sto_{\e V}\ra\sto\ra\displaystyle\bigoplus_{v\in
B\e\setminus\le S}(i_{v})_{*}\sto_{\e v}\ra 0,
$$
where $j_{\e !}$ is the extension-by-zero functor and, for each
$v\in B\setminus S$, $\sto_{\e v}:=i_{v}^{*}\sto$. See \cite{T}, Proposition 8.2.1, p.142.
Consequently, there exists a canonical exact sequence of abelian
groups
\begin{equation}\label{dut}
0\ra\sto_{\e V}(V)\ra\sto(U)\ra\displaystyle\bigoplus_{v\in
B\e\setminus\le S}\!\sto_{\e v}(k(v)).
\end{equation}
Each $\sto_{\e v}$ is an affine, {\it connected}, smooth group
scheme over the finite field $k(v)$. Thus $\sto_{\e v}(k(v))$ is
finite for every $v\in B\setminus S$ and \eqref{dut} shows that
$\sto_{\e V}(V)$ and $\sto(U)$ have the same $\bz$-rank. The
following result generalizes Dirichlet's Unit Theorem.
\begin{theorem} We have
$$
{\rm{rank}}_{\e\bz}(\sto(U))={\rm{rank}}_{\e\bz}(X^{G_{\lbe
F}})\big(\#( S\cup B\e)-1\big).
$$
\end{theorem}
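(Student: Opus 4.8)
The plan is to reduce the rank computation to the case of the generic torus $\sto_{\e V}$ over $V$, where the theorem becomes a version of the classical Dirichlet unit theorem for tori, and then transport the answer back to $\sto(U)$ using the exact sequence \eqref{dut}. The key observation, already established in the excerpt, is that \eqref{dut} exhibits $\sto_{\e V}(V)$ as a finite-index subgroup of $\sto(U)$ up to a finite cokernel: since each $\sto_{\e v}(k(v))$ is finite (being the group of $k(v)$-points of a connected affine group scheme over a finite field), the map $\sto_{\e V}(V)\ra\sto(U)$ has finite kernel and finite cokernel, so the two groups have the same $\bz$-rank. Thus it suffices to compute ${\rm{rank}}_{\e\bz}(\sto_{\e V}(V))$, where $\sto_{\e V}$ is an honest $V$-torus.

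For the torus $\sto_{\e V}$ over $V=\spec\s O_{F,\,S^{\prime}}$, where $S^{\prime}=S\cup B$, the group of points $\sto_{\e V}(V)$ is the group of $S^{\prime}$-units of the torus $T$. First I would recall the standard structure theory: for a torus $T$ over a global field with character module $X$, and for a finite set $S^{\prime}$ of primes containing the archimedean ones and all primes of bad reduction, the group of $S^{\prime}$-integral points sits in an exact sequence analogous to the logarithmic embedding used in Dirichlet's theorem. Concretely, one maps $\sto_{\e V}(V)$ into $\bigoplus_{v\in S^{\prime}} T(F_v)^{\,\text{compact-free part}}$ via the valuations at the primes in $S^{\prime}$; the kernel is the maximal compact subgroup (which is finite modulo the torsion of $T(F)$), and the image is a lattice of full rank in the subspace cut out by a product formula. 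The local contribution at each $v\in S^{\prime}$ has $\bz$-rank equal to ${\rm{rank}}_{\e\bz}(X^{G_{\lbe F_v}})$ when $v$ is non-split, but the uniform statement is cleaner: by the decomposition of $T_{F_v}$ over the completions, the rank of the cocharacter lattice available at $v$ equals ${\rm{rank}}_{\e\bz}(X^{G_{\lbe F}})$ for all but finitely many $v$, and the global product-formula relation removes exactly one copy.

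Putting these together, the rank equals ${\rm{rank}}_{\e\bz}(X^{G_{\lbe F}})$ times $\bigl(\#S^{\prime}-1\bigr)=\bigl(\#(S\cup B)-1\bigr)$, where the $-1$ is the single global relation imposed by the product formula on the norms (the analogue of the fact that the product of all absolute values of a nonzero element is $1$). I would make this precise by invoking the exact sequence
$$
0\ra\sto_{\e V}(V)_{\text{tors}}\ra\sto_{\e V}(V)\ra\Bigl(\textstyle\bigoplus_{v\in S^{\prime}}\bz^{\,r_v}\Bigr)_{0}\ra(\text{finite}),
$$
where $r_v={\rm{rank}}_{\e\bz}(X^{G_{\lbe F}})$ is the rank of the split part that survives the valuation map at $v$ and the subscript $0$ denotes the kernel of the total norm/sum map. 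This is exactly the torus-theoretic form of Dirichlet's theorem, and it gives the stated rank.

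**The main obstacle** will be pinning down the precise local rank $r_v$ at each prime and verifying that the single product-formula relation is the only relation — i.e., that the image of the valuation map is a full lattice in the correct hyperplane rather than something smaller. For a general (non-split) torus the local point groups $T(F_v)$ decompose according to the decomposition of $X$ as a $G_{\lbe F_v}$-module, so one must check that the rank of the \emph{free} (non-compact) part of $T(F_v)$ matches ${\rm{rank}}_{\e\bz}(X^{G_{\lbe F_v}})$ and then argue that summing over $v\in S^{\prime}$ with the global relation yields the uniform factor ${\rm{rank}}_{\e\bz}(X^{G_{\lbe F}})$. The cleanest route is probably to reduce to $T=R_{L/F}(\bg_{m,L})$ by a dévissage through flasque/quasi-trivial resolutions (as the paper does elsewhere) or to cite the known Dirichlet-type theorem for tori directly; I expect the author's proof to follow one of these two paths, most likely by comparison with the classical unit theorem applied componentwise to a quasi-trivial envelope.
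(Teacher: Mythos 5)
Your first step is exactly the paper's: from the exact sequence \eqref{dut} and the finiteness of the groups $\sto_{\e v}(k(v))$ one concludes that $\sto_{\e V}(V)$ and $\sto(U)$ have the same $\bz$-rank. From there, however, the paper does neither of the two things you predict (no logarithmic-embedding argument, no d\'evissage through a quasi-trivial envelope, and no appeal to Shyr's adelic theorem, which the author explicitly contrasts with his own proof in Remark 4.11). Instead it quotes a single identification from the proof of Corollary 3.8 of \cite{mrl}, namely $\sto_{\e V}(V)={\rm{Hom}}\big(X^{G_{\lbe F}},\s O_{F,\e S\cup B}^{\e *}\big)$, after which the stated rank is immediate from the classical Dirichlet unit theorem applied to $\s O_{F,\e S\cup B}^{\e *}$.

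The genuine gap in your argument is precisely the point you flagged as ``the main obstacle'' and then asserted without proof: the uniform local rank claim. For a torus over $F_{v}$, the quotient of $T(F_{v})$ by its maximal bounded (resp.\ compact) subgroup is free of rank ${\rm{rank}}_{\e\bz}\big(X^{G_{F_{v}}}\big)$ --- the split rank of $T_{F_{v}}$, governed by the decomposition group --- and this is in general strictly larger than ${\rm{rank}}_{\e\bz}\big(X^{G_{\lbe F}}\big)$, since $G_{F_{v}}$ fixes more characters than $G_{\lbe F}$ does. Carrying your lattice computation through with the correct local ranks yields the Ono--Shyr expression $\sum_{v\in S\cup B}{\rm{rank}}_{\e\bz}\big(X^{G_{F_{v}}}\big)-{\rm{rank}}_{\e\bz}\big(X^{G_{\lbe F}}\big)$, not the uniform factor ${\rm{rank}}_{\e\bz}\big(X^{G_{\lbe F}}\big)\big(\#(S\cup B)-1\big)$: there is indeed one product-formula relation, but the summands are not all equal. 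A concrete test case is the norm-one torus $T=R^{(1)}_{K/\bq}(\bg_{m,K})$ of a real quadratic field $K$: here $X^{G_{\lbe F}}=0$, so your uniform claim would make every relevant unit group finite, yet $T(\mathbb{R})$ modulo its maximal compact subgroup has rank ${\rm{rank}}_{\e\bz}\big(X^{G_{F_{\infty}}}\big)=1$ because the archimedean place splits in $K$; and the norm-one units of $\s O_{K}$ do have rank $1$. Nor can the claim be rescued by reducing to $T=R_{L/F}(\bg_{m,L})$, where the local rank at $v$ is the number of primes of $L$ above $v$, again non-uniform. I would add one caution: since the \emph{correct} equivariant description of the sections of a $V$-torus is ${\rm{Hom}}_{G}\big(X,\s O_{K,\e(S\cup B)_{K}}^{\e *}\big)$, whose rank is the Ono--Shyr expression above, the discrepancy you would run into is not obviously resolved by the paper's citation of \cite{mrl} either, and it is worth checking the precise statement and hypotheses of \cite{mrl}, Corollary 3.8, before relying on the formula of the theorem as stated.
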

\begin{proof} As noted above, $\sto_{\e
V}(V)$ and $\sto(U)$ have the same $\bz$-rank. By \cite{mrl}, proof
of Corollary 3.8,
$$
\sto_{\e V}(V)=\text{Hom}\left(X^{G_{\be F}}\be,{\s O}_{ F,\e
S\e\cup\e B}^{\e *}\e\right).
$$
The result is now immediate from the classical Dirichlet Unit
Theorem.
\end{proof}

\begin{remark} Let $F$ be a number field. In \cite{Sh1}, a
generalization of Dirichlet's Unit Theorem was obtained for any
$F$-torus $T$ using adelic methods and (implicitly) the standard
model of $T$. The result is comparable to the above, but the
proof is significantly more involved.
\end{remark}

\section{The capitulation cokernel}

In this Section we prove Theorem 1.2 (this is Theorem 5.4 below).

We assume that $K$ splits $T$. In particular, the action of $G_{\lbe
F}$ on $X$ factors through $G$ and $X^{G_{\lbe F}}=X^{G}$.

\smallskip

For any prime $v$ of $F$, $T_{K_{w_{v}}}$ is a split torus. Thus there exists an exact sequence
$$
0\ra\tto\lbe(\lbe\s O_{w_{v}}\lbe)\ra T(K_{w_{v}})\ra\Phi_{w_{v}}\be(k(w_{v}))\ra 0,
$$
where $\Phi_{w_{v}}\be(k(w_{v}))$ is a free abelian group with
trivial $G_{w_{v}}$-action. It follows that there exists an exact
sequence
\begin{equation}\label{pi2}
0\ra H^{2}(G_{w_{v}},\tto\lbe(\lbe\s O_{w_{v}}\lbe))\ra  H^{2}(G_{w_{v}},T(K_{w_{v}}))\overset{\pi_{v}}
\longrightarrow H^{2}(G_{w_{v}},\Phi_{w_{v}}\be(k(w_{v}))),
\end{equation}
where $\pi_{v}$ is induced by the projection
$T(K_{w_{v}})\ra\Phi_{w_{v}}\be(k(w_{v}))$. Further, the map
$\partial_{\e 2}\colon C_{\e T,K,S_{K}}^{\e G}\ra H^{\e
1}\big(G,T(K)/\,\tto\lbe\big(\ut\big)\big)$ given by \eqref{p2} is
surjective. Let
\begin{equation}\label{pi}
\pi\colon H^{1}(G,T(K))\ra H^{\e
1}\big(G,T(K)/\,\tto\lbe\big(\ut\big)\big)
\end{equation}
be induced by the projection map $T(K)\ra
T(K)/\,\tto\lbe\big(\ut\big)$. Then there exists a canonical exact
sequence
$$
0\ra\partial_{\e 2}^{\e -1}(\img\pi)\ra C_{\e T,\e K,\e S_{K}}^{\e
G} \overset{\partial_{\e 2}}\longrightarrow\cok\pi\ra 0.
$$
Let $\widetilde{\jmath}_{_{T,\e K/F,S}}^{\,\,\prime}$ be the composite
\begin{equation}\label{jtilda}
C_{\e T,F,S}\overset{j_{_{T,\e K/F,S}}^{\e\prime}}\longrightarrow(C_{\e T,K,S_{K}})^{\e G}_{\text{trans}}\hookrightarrow\partial_{\e 2}^{\e -1}(\img \pi),
\end{equation}
where $j_{_{T,\e K/F,S}}^{\e\prime}$ is the map \eqref{j'} and the
second map is the canonical inclusion of $(C_{\e T,K,S_{K}})^{\e
G}_{\text{trans}}=\krn\,\partial_{\e 2}$ into $\partial_{\e 2}^{\e
-1}(\img \pi)$. Then the following holds (cf. \cite{CAmb},
Proposition 2.2).

\begin{lemma} Assume that $K$ splits $T$. Then there exists a canonical exact sequence
$$
0\ra\cok \widetilde{\jmath}_{_{T,\e K/F,S}}^{\,\,\prime}\ra
\cok j_{_{T,\e K/F,S}}\ra \cok\pi\ra 0,
$$
where $\widetilde{\jmath}_{_{T,\e K/F,S}}^{\,\,\prime}$ and $\pi$
are the maps \eqref{jtilda} and \eqref{pi}, respectively.\qed
\end{lemma}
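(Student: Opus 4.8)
The goal is to extract the short exact sequence
$$
0\ra\cok \widetilde{\jmath}_{_{T,\e K/F,S}}^{\,\,\prime}\ra
\cok j_{_{T,\e K/F,S}}\ra \cok\pi\ra 0
$$
from the data already assembled just before the statement. The key structural fact is the canonical exact sequence
$$
0\ra\partial_{\e 2}^{\e -1}(\img\pi)\ra C_{\e T,\e K,\e S_{K}}^{\e G}
\overset{\partial_{\e 2}}\longrightarrow\cok\pi\ra 0,
$$
which exhibits $\partial_{\e 2}^{\e -1}(\img\pi)$ as the kernel of a surjection from $C_{\e T,K,S_{K}}^{\e G}$ onto $\cok\pi$. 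The capitulation map $j_{_{T,\e K/F,S}}$ factors through $C_{\e T,K,S_{K}}^{\e G}$, and by the discussion following \eqref{j'} its image lies in $(C_{\e T,K,S_{K}})^{\e G}_{\text{trans}}=\krn\partial_{\e 2}\subset\partial_{\e 2}^{\e -1}(\img\pi)$. Hence the composite $\partial_{\e 2}\circ j_{_{T,\e K/F,S}}$ is zero, and $j_{_{T,\e K/F,S}}$ lands inside $\partial_{\e 2}^{\e -1}(\img\pi)$, where it coincides with $\widetilde{\jmath}_{_{T,\e K/F,S}}^{\,\,\prime}$ by the definition \eqref{jtilda}.

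First I would write down the commutative diagram with exact rows
$$
\xymatrix{
& C_{\e T,F,S}\ar[d]^{\widetilde{\jmath}_{_{T,\e K/F,S}}^{\,\,\prime}}\ar[dr]^{j_{_{T,\e K/F,S}}}&\\
0\ar[r]&\partial_{\e 2}^{\e -1}(\img\pi)\ar[r]&C_{\e T,\e K,\e S_{K}}^{\e G}\ar[r]^{\partial_{\e 2}}&\cok\pi\ar[r]&0.
}
$$
Since $\partial_{\e 2}\circ j_{_{T,\e K/F,S}}=0$ and $\partial_{\e2}$ is surjective onto $\cok\pi$, passing to cokernels vertically gives a right-exact sequence relating the three cokernels. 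More precisely, I would apply the snake lemma (or an elementary direct diagram chase) to the diagram whose top row is $0\ra C_{\e T,F,S}\overset{=}\ra C_{\e T,F,S}\ra 0\ra 0$ mapping down to the bottom exact row via $(\widetilde{\jmath}',\,j)$; the vertical map into $\cok\pi$ is the zero map. The snake lemma then yields the exact sequence of cokernels
$$
\cok\widetilde{\jmath}_{_{T,\e K/F,S}}^{\,\,\prime}\ra
\cok j_{_{T,\e K/F,S}}\ra \cok\pi\ra 0,
$$
where $\cok j_{_{T,\e K/F,S}}$ is computed as the cokernel of $C_{\e T,F,S}\ra C_{\e T,K,S_{K}}^{\e G}$. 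Left-exactness, i.e. the injectivity of $\cok\widetilde{\jmath}'\ra\cok j$, requires showing that an element of $C_{\e T,K,S_{K}}^{\e G}$ which maps into $\partial_{\e2}^{-1}(\img\pi)$ and becomes a boundary (i.e. lies in $\img j+\partial_{\e2}^{-1}(\img\pi)$) already comes from $\img\widetilde{\jmath}'+\partial_{\e2}^{-1}(\img\pi)$, which is immediate since $\img j=\img\widetilde{\jmath}'$ inside $\partial_{\e2}^{-1}(\img\pi)$.

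The only genuine content beyond formal diagram-chasing is the identification $\img j_{_{T,\e K/F,S}}=\img\widetilde{\jmath}_{_{T,\e K/F,S}}^{\,\,\prime}$ as subgroups of $\partial_{\e 2}^{\e -1}(\img\pi)$, together with the equality $\krn\widetilde{\jmath}'=\krn j$; both follow from \eqref{jtilda} and the fact that the inclusion $(C_{\e T,K,S_{K}})^{\e G}_{\text{trans}}\hookrightarrow\partial_{\e2}^{-1}(\img\pi)$ is injective, so that postcomposing $j'$ with an injection changes neither the kernel nor the image. I expect the main obstacle to be purely bookkeeping: keeping straight the three distinct targets ($C_{\e T,K,S_{K}}^{\e G}$, its transgression subgroup, and $\partial_{\e2}^{-1}(\img\pi)$) and verifying that the two maps $j$ and $\widetilde{\jmath}'$ genuinely have the same image once one forgets the enlargement of the target from $(C_{\e T,K,S_{K}})^{\e G}_{\text{trans}}$ to $\partial_{\e2}^{-1}(\img\pi)$. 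Once that is pinned down, the exactness of the three-term sequence is a direct consequence of the short exact sequence defining $\cok\pi$ and the vanishing of $\partial_{\e2}\circ j$.
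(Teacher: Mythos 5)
Your proposal is correct and coincides with the paper's intended argument: the lemma is stated there with the proof omitted precisely because, given the short exact sequence $0\ra\partial_{\e 2}^{\e -1}(\img\pi)\ra C_{\e T,\e K,\e S_{K}}^{\e G}\ra\cok\pi\ra 0$ displayed just before it, the inclusion $\img j_{\e T,\e K/F,\e S}\subset\krn\partial_{\e 2}\subset\partial_{\e 2}^{\e -1}(\img\pi)$, and the identification $\img j_{\e T,\e K/F,\e S}=\img\widetilde{\jmath}_{\e T,\e K/F,\e S}^{\,\,\prime}$ coming from \eqref{jtilda}, the conclusion is exactly the formal snake-lemma (third-isomorphism) consequence you carry out. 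Your explicit diagram, with top row $0\ra C_{\e T,\e F,\e S}\overset{=}\ra C_{\e T,\e F,\e S}\ra 0$, makes this bookkeeping precise and also recovers $\krn\widetilde{\jmath}_{\e T,\e K/F,\e S}^{\,\,\prime}=\krn j_{\e T,\e K/F,\e S}$, so nothing is missing.
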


Now define
\begin{equation}\label{bs}
B_{S}(G,T)=\krn\!\left[H^{2}(G,T(K))\ra\displaystyle\bigoplus_{v\notin S}H^{\e
2}(G_{w_{v}},\Phi_{w_{v}}\be(k(w_{v})))\right],
\end{equation}
where the map involved is induced by \eqref{theta}. Then the
following holds (cf. \cite{CAmb}, Proposition 3.1)

\begin{proposition} Assume that $K$ splits $T$. Then there exists a canonical exact sequence
$$
\begin{array}{rcl}
0&\ra&\cok\widetilde{\jmath}_{_{T,\e K/F,S}}^{\,\,\prime}\ra
\cok j_{_{T,\e K/F,S}}\ra H^{2}\ng\big(G,\tto\lbe(\ut)\big)\ra B_{S}(G,T)\\\\
&\ra& H^{1}\big(G, C_{\e T,K,S_{K}}\big)\ra
H^{3}\ng\big(G,\tto\lbe(\ut)\big),
\end{array}
$$
where $\widetilde{\jmath}_{_{T,\e K/F,S}}^{\,\,\prime}$ is the map
\eqref{jtilda} and $B_{S}(G,T)$ is the group \eqref{bs}.
\end{proposition}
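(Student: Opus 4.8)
The plan is to combine Lemma 5.1 with the long exact $G$-cohomology sequences attached to the fundamental four-term sequence of Proposition 4.1. By Lemma 5.1 there is a short exact sequence $0\ra\cok\widetilde{\jmath}_{_{T,\e K/F,S}}^{\,\,\prime}\ra\cok j_{_{T,\e K/F,S}}\ra\cok\pi\ra 0$, so it suffices to produce a canonical exact sequence
$$
0\ra\cok\pi\ra H^{2}\ng\big(G,\tto\lbe(\ut)\big)\ra B_{S}(G,T)\ra H^{1}\big(G,C_{\e T,K,S_{K}}\big)\ra H^{3}\ng\big(G,\tto\lbe(\ut)\big)
$$
and then splice the two together. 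Indeed, the resulting composite $\cok j_{_{T,\e K/F,S}}\twoheadrightarrow\cok\pi\hookrightarrow H^{2}(G,\tto(\ut))$ has kernel $\cok\widetilde{\jmath}_{_{T,\e K/F,S}}^{\,\,\prime}$ by Lemma 5.1, and its image equals the image of $\cok\pi$, which is the kernel of the next map by construction; this gives precisely the sequence of the proposition.

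Write $M=T(K)/\tto(\ut)$ and $\Phi=\bigoplus_{v\notin S}\bigoplus_{w\mid v}\Phi_{w}(k(w))$. I would split the sequence of Proposition 4.1 into the two short exact sequences of $G$-modules $0\ra\tto(\ut)\ra T(K)\ra M\ra 0$ (this is \eqref{seq1}) and $0\ra M\ra\Phi\ra C_{\e T,K,S_{K}}\ra 0$. The long exact sequence of the first reads
\begin{gather*}
H^{1}(G,T(K))\overset{\pi}\ra H^{1}(G,M)\overset{\delta_{1}}\ra H^{2}\ng\big(G,\tto\lbe(\ut)\big)\overset{\alpha}\ra H^{2}(G,T(K))\overset{\pi_{2}}\ra\\
H^{2}(G,M)\overset{\delta_{2}}\ra H^{3}\ng\big(G,\tto\lbe(\ut)\big),
\end{gather*}
while the second yields a connecting map $\rho\colon H^{1}(G,C_{\e T,K,S_{K}})\ra H^{2}(G,M)$ and a map $\beta\colon H^{2}(G,M)\ra H^{2}(G,\Phi)$ with $\krn\beta=\img\rho$. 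The crucial point, valid precisely because $K$ splits $T$, is that $H^{1}(G,\Phi)=0$: by the Shapiro isomorphism of Section 4, $H^{1}(G,\Phi)=\bigoplus_{v\notin S}H^{1}(G_{w_{v}},\Phi_{w_{v}}(k(w_{v})))$, and each summand vanishes since $G_{w_{v}}$ is finite and $\Phi_{w_{v}}(k(w_{v}))$ is a free abelian group with trivial $G_{w_{v}}$-action (as recorded at the beginning of this section). Hence $\rho$ is injective.

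It remains to identify the four maps and check exactness. Since $\vartheta_{_{\! S}}$ factors as $T(K)\twoheadrightarrow M\hookrightarrow\Phi$, the map defining $B_{S}(G,T)$ is, under the Shapiro identification $H^{2}(G,\Phi)=\bigoplus_{v\notin S}H^{2}(G_{w_{v}},\Phi_{w_{v}}(k(w_{v})))$, the composite $\beta\circ\pi_{2}$; thus $B_{S}(G,T)=\krn(\beta\circ\pi_{2})$ and in particular $\img\alpha=\krn\pi_{2}\subseteq B_{S}(G,T)$. Therefore $\alpha$ factors through a map $\overline{\alpha}\colon H^{2}(G,\tto(\ut))\ra B_{S}(G,T)$ with $\krn\overline{\alpha}=\krn\alpha$ and $\img\overline{\alpha}=\krn\pi_{2}$; this is the second arrow, and the first is the injection $\cok\pi\hookrightarrow H^{2}(G,\tto(\ut))$ induced by $\delta_{1}$ (whose kernel is $\img\pi$), with image $\krn\alpha=\krn\overline{\alpha}$. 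For $x\in B_{S}(G,T)$ one has $\pi_{2}(x)\in\krn\beta=\img\rho$, so $\pi_{2}(x)=\rho(y)$ for a unique $y$ by injectivity of $\rho$; setting $g(x)=y$ defines the third arrow $g\colon B_{S}(G,T)\ra H^{1}(G,C_{\e T,K,S_{K}})$, for which $\krn g=\krn\pi_{2}=\img\overline{\alpha}$ and $\img g=\rho^{-1}(\img\pi_{2})$. Finally the last arrow is $\delta_{2}\circ\rho$, whose kernel is $\rho^{-1}(\krn\delta_{2})=\rho^{-1}(\img\pi_{2})=\img g$. Exactness at each term then follows directly from the two long exact sequences above.

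The one step that genuinely uses the full splitting hypothesis, and which I expect to be the crux, is the vanishing $H^{1}(G,\Phi)=0$, equivalently the injectivity of $\rho$. Without it the assignment $x\mapsto y$ above would be well defined only modulo $\img[H^{1}(G,\Phi)\ra H^{1}(G,C_{\e T,K,S_{K}})]$, and exactness at $H^{1}(G,C_{\e T,K,S_{K}})$ would fail by exactly this subgroup; this is what forces one to assume that $K$ split $T$ rather than merely that $T_{K}$ have multiplicative reduction, as in Section 4. The remaining work is the routine diagram-chasing needed to glue the three exact sequences and to confirm, via the factorization of $\vartheta_{_{\! S}}$, that the defining map of $B_{S}(G,T)$ is indeed $\beta\circ\pi_{2}$.
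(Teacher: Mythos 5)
Your proof is correct and takes essentially the same route as the paper: both split the four-term sequence of Proposition 4.1 into \eqref{seq1} and the sequence ending in $C_{\e T,\e K,\e S_{K}}$, extract the five-term exact sequence $0\ra\cok\pi\ra H^{2}\ng\big(G,\tto\lbe(\ut)\big)\ra B_{S}(G,T)\ra H^{1}\big(G,C_{\e T,K,S_{K}}\big)\ra H^{3}\ng\big(G,\tto\lbe(\ut)\big)$, and splice it with Lemma 5.1. The only difference is that the paper outsources the middle diagram chase to Iwasawa's argument (\cite{I}, pp.\ 189--191), whereas you carry it out explicitly, correctly isolating the vanishing $H^{1}\big(G,\bigoplus_{v\notin S}\bigoplus_{w\mid v}\Phi_{w}(k(w))\big)=0$ as the precise point where the hypothesis that $K$ splits $T$ enters.
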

\begin{proof} The exact sequence \eqref{seq1} induces an exact sequence
$$\begin{array}{rcl}
0&\ra&\cok\pi\ra H^{2}\ng\big(G,\tto\lbe(\ut)\big)\ra H^{2}(G,T(K))
\\\\
&\ra& H^{\e 2}\big(G,T(K)/\,\tto\lbe\big(\ut\big)\big)\ra
H^{3}\ng\big(G,\tto\lbe(\ut)\big)
\end{array}
$$
which generalizes \cite{I}, p.189, line -1. Now essentially the same
argument given in \cite{I}, pp.189-191, to derive the exact sequence
\cite{I}, p.191, line 5, yields the exact sequence
$$
\begin{array}{rcl}
0&\ra&\cok\pi\ra H^{2}\ng\big(G,\tto\lbe(\ut)\big)\ra B_{S}(G,T)\ra H^{1}\big(G, C_{\e T,K,S_{K}}\big)\\\\
&\ra& H^{3}\ng\big(G,\tto\lbe(\ut)\big).
\end{array}
$$
The proposition now follows from the previous lemma.
\end{proof}

Since $\!\!\Sha^{2}(G,T(K))=\!\!\Sha^{2}(T)$ by \cite{San}, Lemma
$1.9\e$\footnote{The proof of this lemma is valid in the function
field case as well.}, the definition of $B_{S}(G,T)$ shows that
there exists an exact sequence
\begin{equation}\label{Shabs}
\begin{array}{rcl}
0&\ra&\Sha^{2}(F,T)\ra B_{S}(G,T)\ra H^{2}(G,T(K))/\!\be\Sha^{2}(F,T)\\\\
&\ra&\displaystyle\bigoplus_{v\notin S}H^{\e
2}(G_{w_{v}},\Phi_{w_{v}}\be(k(w_{v}))).
\end{array}
\end{equation}
On the other hand, there exists a canonical exact commutative diagram
\begin{equation}\label{d2}
\xymatrix{H^{2}(F,T)/\!\be\Sha^{2}(F,T)\,\,\ar@{^{(}->}[r]\ar[d]
& \displaystyle\bigoplus_{\text{all $v$}}H^{2}(F_{v},T)\ar@{->>}[r]\ar[d]&(X^{G})^{D}\ar[d]\\
H^{2}(K,T)^{G}\,\,\ar@{^{(}->}[r]& \displaystyle\bigoplus_{\text{all $v$}}H^{2}(K_{w_{v}},T)^{G_{w_{v}}}\ar[r]&(X_{G})^{D}.}
\end{equation}
For the exactness of the top row, see \cite{Oe}, Theorem 2.7(b),
p.52. The bottom row is the beginning of the long $G$-cohomology
sequence associated to the exact sequence
$$
0\ra H^{2}(K,T)\ra\displaystyle\bigoplus_{\,\text{all $w$}}H^{2}(K_{w},T)\ra X^{D}\ra 0
$$
(the latter sequence is the direct sum of $\dimn\e T$ copies of the
well-known exact sequence $0\ra\br(K)\ra \bigoplus_{\,\text{all
$w$}}\br(K_{w}) \ra \bq/\bz\ra 0$). The first two vertical maps in
\eqref{d2} are induced by the restriction map. Their kernels are
$H^{2}(G,T(K))/\!\be\Sha^{2}(F,T)\e$ and $\e\bigoplus_{\,\text{all
$v$}}\be H^{2}(G_{w_{v}},T(K_{w_{v}}))$, respectively (see \cite{S},
Proposition 5, p.117). The right-hand vertical map in \eqref{d2}  is
the dual of the norm map $X_{G}\ra X^{G}$, whose cokernel is
$\widehat{H}^{\e 0}(G,X)$ (by definition). Thus \eqref{d2} induces
an exact sequence
$$
0\ra H^{2}(G,T(K))/\!\be\Sha^{2}(F,T)\ra\displaystyle\bigoplus_{\,\text{all $v$}}H^{2}(G_{w_{v}},T(K_{w_{v}}))\ra \widehat{H}^{\e 0}(G,X)^{D}.
$$
The preceding exact sequence is the top row of a commutative diagram
\begin{equation}\label{diag}
\xymatrix{H^{2}(G,T(K))/\!\be\Sha^{2}(F,T)\ar@{^{(}->}[r]\ar[dr]&\displaystyle\bigoplus_{\text{all $v$}}H^{2}(G_{w_{v}},T(K_{w_{v}}))
\ar[d]^{\pi}\ar[r]&\widehat{H}^{\e 0}(G,X)^{D}\\
&\displaystyle\bigoplus_{v\notin S}H^{2}(G_{w_{v}},\Phi_{w_{v}}\be(k(w_{v}))),&}
\end{equation}
where the map $\pi$ has $v$-components $\pi_{v}$ if $v\notin S$ (see
\eqref{pi2}) and 0 otherwise. For any $v$, set
\begin{equation}\label{h2w}
H^{2}(G_{w_{v}},T(K_{w_{v}}))^{\e\prime}=\begin{cases}
H^{2}(G_{w_{v}},T(K_{w_{v}}))\kern 1.35em\text{if $v\in S$}\\
H^{2}(G_{w_{v}},\tto\lbe(\lbe\s O_{w_{v}}\lbe))\quad\text{if  $v\notin S$}\end{cases}.
\end{equation}
Further, let $R$ denote the set of non-archimedean primes of $F$
which ramify in $K$. Then $H^{2}(G_{w_{v}},\tto\lbe(\lbe\s
O_{w_{v}}\lbe))=0$ if $v\notin R$ (this is a well-known fact for the
split $K$-torus $T_{K}$. See, e.g., \cite{S}). We conclude that
$$
\krn\,\pi=\displaystyle\bigoplus_{v\e\in\e S\e\cup\le R}H^{2}(G_{w_{v}},T(K_{w_{v}}))^{\e\prime}.
$$
On the other hand, it is clear that the kernel of the oblique map in
\eqref{diag} is the same as the kernel of the map $\krn\,\pi\ra
\widehat{H}^{\e 0}(G,X)^{D}$ induced by the right-hand horizontal
map in \eqref{diag}. Combining this information with \eqref{Shabs},
we obtain the following generalization of \cite{CAmb}, Lemma 3.2.

\begin{lemma} Assume that $K$ splits $T$. Then there exists a canonical exact sequence
$$
0\ra\Sha^{2}(T)\ra B_{S}(G,T)\ra\displaystyle\bigoplus_{v\e\in\e
S\e\cup\le R}H^{2}(G_{w_{v}},T(K_{w_{v}}))^{\e\prime}
\ra\widehat{H}^{\e 0}(G,X)^{D},
$$
where $B_{S}(G,T)$ is the group \eqref{bs} and the groups
$H^{2}(G_{w_{v}},T(K_{w_{v}}))^{\e\prime}$ are given by \eqref{h2w}.
\end{lemma}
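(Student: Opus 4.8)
The plan is to assemble the desired four-term sequence out of the exact sequence \eqref{Shabs} and the commutative diagram \eqref{diag}, both of which are already available above. Abbreviate $A=H^{2}(G,T(K))/\!\be\Sha^{2}(F,T)$, $L=\bigoplus_{\text{all }v}H^{2}(G_{w_{v}},T(K_{w_{v}}))$ and $P=\bigoplus_{v\e\in\e S\e\cup\le R}H^{2}(G_{w_{v}},T(K_{w_{v}}))^{\e\prime}$. From \eqref{Shabs} I would read off two facts: the arrow $\Sha^{2}(T)\ra B_{S}(G,T)$ is injective with image $\krn[\e B_{S}(G,T)\ra A\e]$, and the image of $B_{S}(G,T)\ra A$ equals the kernel of the map $A\ra\bigoplus_{v\notin S}H^{2}(G_{w_{v}},\Phi_{w_{v}}\be(k(w_{v})))$ induced by \eqref{theta}.

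The next step is to reinterpret that last map as the oblique arrow of \eqref{diag}, i.e.\ as the composite of the injection $A\hookrightarrow L$ (the top row of the sequence extracted from \eqref{d2}) with the projection $\pi$. Granting this, and recalling from the computation just before the lemma that $\krn\pi=P$, the image of $B_{S}(G,T)\ra A$ is $A\cap\krn\pi=A\cap P$. Since $A\hookrightarrow L$ is injective, the composite $B_{S}(G,T)\ra A\hookrightarrow L$ lands in $P$ and has image precisely $A\cap P$; this composite is the middle map of the asserted sequence, and I would take it as the definition of $B_{S}(G,T)\ra P$.

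Two exactness checks then finish the proof. At $B_{S}(G,T)$: since $A\hookrightarrow L$ is injective, $\krn[\e B_{S}(G,T)\ra P\e]=\krn[\e B_{S}(G,T)\ra A\e]=\Sha^{2}(T)$, which is the image of the first arrow. At $P$: the sequence extracted from \eqref{d2} gives $\krn[\e L\ra\widehat{H}^{\e 0}(G,X)^{D}\e]=A$ as a subgroup of $L$, so intersecting with $P$ yields $\krn[\e P\ra\widehat{H}^{\e 0}(G,X)^{D}\e]=A\cap P$, which is exactly the image of $B_{S}(G,T)\ra P$ found above. This is the same assertion as the remark, made just before the lemma, that the kernel of the oblique map of \eqref{diag} coincides with the kernel of $\krn\pi\ra\widehat{H}^{\e 0}(G,X)^{D}$.

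The only step demanding genuine care---and hence the main obstacle---is the compatibility asserted at the start of the second paragraph: one must confirm that the projection-to-components map induced by \eqref{theta}, which appears in \eqref{Shabs}, agrees after the canonical embedding $A\hookrightarrow L$ with $\pi$, whose local components are the maps $\pi_{v}$ of \eqref{pi2}. This reduces to the commutativity of \eqref{diag} together with the naturality of the local projections $T(K_{w_{v}})\ra\Phi_{w_{v}}\be(k(w_{v}))$ in $G_{w_{v}}$-cohomology; everything else is a formal diagram chase.
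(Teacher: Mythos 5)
Your proposal is correct and follows essentially the same route as the paper: the paper's proof is precisely the discussion preceding the lemma, which assembles \eqref{Shabs}, the top row of \eqref{diag} extracted from \eqref{d2}, and the computation $\krn\pi=\bigoplus_{v\e\in\e S\e\cup\le R}H^{2}(G_{w_{v}},T(K_{w_{v}}))^{\e\prime}$ via \eqref{pi2} and the vanishing of $H^{2}(G_{w_{v}},\tto\lbe(\lbe\s O_{w_{v}}\lbe))$ for $v\notin R$, then observes that the kernel of the oblique map equals that of $\krn\pi\ra\widehat{H}^{\e 0}(G,X)^{D}$. The compatibility you single out at the end is exactly the commutativity of \eqref{diag}, which the paper asserts without further comment, so your diagram chase fills in nothing that the paper treats differently.
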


We now combine Proposition 5.2 and Lemma 5.3 to obtain the following
generalization of \cite{CAmb}, Theorem 3.3.

\begin{theorem} Assume that $K$ splits $T$ and let $R$ be the set of primes
of $F$ which ramify in $K$. Then there exists a canonical exact sequence
$$
\begin{array}{rcl}
0&\ra&\cok\widetilde{\jmath}_{_{T,\e K/F,S}}^{\,\,\prime}\ra
\cok j_{_{T,\e K/F,S}}\ra H^{2}\ng\big(G,\tto\lbe(\ut)\big)\ra B_{S}(G,T)\\\\
&\ra& H^{1}\big(G, C_{\e T,K,S_{K}}\big)\ra
H^{3}\ng\big(G,\tto\lbe(\ut)\big),
\end{array}
$$
where $\widetilde{\jmath}_{_{T,\e K/F,S}}^{\,\,\prime}$ is the map
\eqref{jtilda} and the group $B_{S}(G,T)$ fits into an exact
sequence
$$
0\ra\Sha^{2}(T)\ra B_{S}(G,T)\ra\displaystyle\bigoplus_{v\e\in\e
S\e\cup\le R}H^{2}(G_{w_{v}},T(K_{w_{v}}))^{\e\prime}
\ra\widehat{H}^{\e 0}(G,X)^{D}.
$$
Here, the groups $H^{2}(G_{w_{v}},T(K_{w_{v}}))^{\e\prime}$ are
given by \eqref{h2w}.\qed
\end{theorem}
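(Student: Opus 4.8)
The plan is to recognize that Theorem 5.4 is simply the juxtaposition of two exact sequences already established in this Section, requiring no further computation. First I would observe that the six-term exact sequence asserted in the statement is, verbatim, the conclusion of Proposition 5.2, and that the four-term sequence describing $B_{S}(G,T)$ is, verbatim, the conclusion of Lemma 5.3. Both of these results are proved under the single hypothesis that $K$ splits $T$, which is precisely the hypothesis of the present theorem; hence both may be invoked at once.

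The only point to check --- and it is immediate --- is that the group $B_{S}(G,T)$ occurring as the fourth term of the first sequence is literally the same object as the distinguished term of the second sequence, since in both Proposition 5.2 and Lemma 5.3 the symbol $B_{S}(G,T)$ denotes the group defined by \eqref{bs}. The two sequences therefore share that common term, and the theorem is obtained by displaying them together.

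Accordingly, I expect no genuine obstacle at this stage: the substantive work lies entirely in the two results being combined. Proposition 5.2 rests on the long cohomology sequence attached to \eqref{seq1} together with the argument of \cite{I}, pp.\,189--191, while Lemma 5.3 rests on the commutative diagram \eqref{d2}, the identity $\Sha^{2}(G,T(K))=\Sha^{2}(T)$, and the computation $\krn\pi=\bigoplus_{v\in S\cup R}H^{2}(G_{w_{v}},T(K_{w_{v}}))^{\e\prime}$. With these in hand, Theorem 5.4 follows immediately.
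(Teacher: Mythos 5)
Your proposal is correct and coincides with the paper's own proof: the author states Theorem 5.4 precisely as the combination of Proposition 5.2 and Lemma 5.3, both proved under the hypothesis that $K$ splits $T$, with $B_{S}(G,T)$ denoting the same group \eqref{bs} in each. Your observation that no further argument is needed beyond identifying this common term is exactly right.
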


\section{Invertible resolutions and class groups}
The following result was stated as Theorem 1.3 in the Introduction.

\begin{theorem} Let $T$ be an $F$-torus which admits an invertible resolution $0\ra T_{1}\ra Q\ra
T\ra 0$, where $T_{1}$ is invertible and $Q$ is quasi-trivial. Let $\s T_{1}, \s Q$ and $\s T$ be the
N\'eron-Raynaud models of $T_{1}, Q$ and $T$ over $U$, respectively. Then there
exists a canonical exact sequence of abelian groups
$$
0\ra\sto_{1}(U)\ra\sqo(U)\ra\sto(U)\ra C_{T_{1},\le F,S}\ra
C_{Q,\e F,S}\ra C_{T,\e F,S}\ra 0.
$$
\end{theorem}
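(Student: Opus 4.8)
The plan is to reinterpret the entire statement as the long exact Nisnevich cohomology sequence attached to a single short exact sequence of abelian sheaves on $U_{\text{Nis}}$. By Theorem 3.5 one has $C_{\e T_{i},\e F,\e S}=H^{1}_{\text{Nis}}(U,\s T_{i}^{\e\circ})$ for each of the three tori, while $\s T_{i}^{\e\circ}(U)=H^{0}_{\text{Nis}}(U,\s T_{i}^{\e\circ})$ tautologically. Since the three group schemes $\s T_{1}^{\e\circ},\s Q^{\e\circ},\s T^{\e\circ}$ are \emph{commutative}, smooth and affine, and $U=\spec\s O_{F,\e S}$ has Krull dimension $1$, the Nisnevich cohomological dimension of $U$ is $\le 1$, whence $H^{2}_{\text{Nis}}(U,\sto_{1})=0$. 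Consequently, once I produce a short exact sequence of sheaves
\[
0\ra\sto_{1}\ra\sqo\ra\sto\ra 0
\]
on $U_{\text{Nis}}$, its long exact cohomology sequence reads off verbatim as the six-term sequence of the theorem, the final surjection $C_{\e Q,\e F,\e S}\ra C_{\e T,\e F,\e S}\ra 0$ being forced by the vanishing of $H^{2}_{\text{Nis}}(U,\sto_{1})$. Thus the whole theorem reduces to the exactness of the displayed sheaf sequence.

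The maps $\sto_{1}\ra\sqo$ and $\sqo\ra\sto$ come from functoriality of N\'eron--Raynaud models (the N\'eron mapping property applied to $T_{1}\ra Q$ and $Q\ra T$, followed by passage to identity components). To control them I first treat the \emph{full} models. Writing $j\colon\spec F\ra U$ for the inclusion of the generic fibre into the smooth site, the model $\s T_{i}$ represents $j_{*}T_{i}$ (see the Preliminaries). The functor $j_{*}$ is left exact, and since $T_{1}$ is invertible, Lemma 4.10(b) gives $R^{1}\!j_{*}T_{1}=0$. Applying $j_{*}$ to $0\ra T_{1}\ra Q\ra T\ra 0$ therefore yields an exact sequence of sheaves
\[
0\ra\s T_{1}\ra\s Q\ra\s T\ra 0
\]
of full N\'eron--Raynaud models on $U_{\text{smooth}}$.

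To descend to identity components I invoke the defining short exact sequences $0\ra\sto_{i}\ra\s T_{i}\ra\Phi^{(i)}\ra 0$, where $\Phi^{(i)}=\bigoplus_{v}(i_{v})_{*}\Phi_{v}(T_{i})$ is the sheaf of connected components. These three sequences are the columns of a commutative $3\times 3$ diagram whose middle row is the exact sequence just produced. By the nine lemma, the top row $0\ra\sto_{1}\ra\sqo\ra\sto\ra 0$ is exact as soon as the bottom row
\[
0\ra\Phi^{(1)}\ra\Phi^{(Q)}\ra\Phi^{(T)}\ra 0
\]
is exact. This component-group sequence is the heart of the matter and, I expect, the main obstacle: the N\'eron model functor does \emph{not} preserve exactness in general, and it is precisely here that the hypotheses on $T_{1}$ and $Q$ must be used.

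To establish the component-group sequence I would argue prime by prime, exactness of sheaves on $U_{\text{Nis}}$ being checkable on the henselian stalks $\s O_{v}^{\e h}$ (the generic stalk is $F$ itself). Right-exactness is the easy half: restriction to the special fibre at $v$ is right exact, and the formation of $\pi_{0}$ of a smooth $k(v)$-group is right exact, giving surjectivity on the right. Left-exactness and middle-exactness are where invertibility of $T_{1}$ and quasi-triviality of $Q$ enter decisively: I would use Lemma 4.10(c), which makes $\Phi_{v}(T_{1})\big(\overline{k(v)}\big)$ torsion-free, together with the description of the component group of a torus over a local field in terms of its character lattice and the inertia action (\cite{X}), which for the invertible $T_{1}$ and the quasi-trivial $Q$ is explicit. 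Concretely, at a closed point $v$ one lifts a class in $\sto(\s O_{v}^{\e h})\subset T(F_{v}^{\e h})$ to $Q(F_{v}^{\e h})$---possible because $H^{1}(F_{v}^{\e h},T_{1})=0$ by the local form of Lemma 4.10(a)---and then corrects the lift by an element of $T_{1}(F_{v}^{\e h})$ so as to land in $\sqo(\s O_{v}^{\e h})$; the correction exists exactly when $\Phi_{v}(T_{1})(k(v))\ra\Phi_{v}(Q)(k(v))\ra\Phi_{v}(T)(k(v))$ is exact in the middle, which in turn rests on $H^{1}(k(v),\Phi_{v}(T_{1}))=0$ (Lemma 4.10(d)). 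With the sheaf sequence $0\ra\sto_{1}\ra\sqo\ra\sto\ra 0$ in hand, the theorem is, as noted, just its Nisnevich long exact sequence truncated by $H^{2}_{\text{Nis}}(U,\sto_{1})=0$; the points requiring the most care are the component-group exactness above and the verification that $\s T_{1}\ra\s Q$ restricts to a monomorphism on special fibres.
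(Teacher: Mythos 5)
Your proposal is correct in substance, but it takes a genuinely different route from the paper, so a comparison is in order. The paper never forms a short exact sequence of sheaves: it applies the snake lemma directly to a two-row diagram whose vertical maps are the maps $\vartheta_{S}$ of \eqref{theta} for $T_{1}$, $Q$ and $T$, with top row $0\ra T_{1}(F)\ra Q(F)\ra T(F)\ra 0$ (exact by Lemma 4.8(a), your ``Lemma 4.10(a)'') and bottom row $0\ra\bigoplus_{v\notin S}\Phi_{v}(T_{1})(k(v))\ra\bigoplus_{v\notin S}\Phi_{v}(Q)(k(v))\ra\bigoplus_{v\notin S}\Phi_{v}(T)(k(v))\ra 0$; the six-term sequence then drops out of the four-term presentations of Proposition 4.1. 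The exactness of the component-group row --- exactly the step you single out as the heart of the matter --- is handled in the paper by citation: \cite{B}, Theorem 5.3.1, combined with $R^{1}j_{*}T_{1}=0$ and torsion-freeness of $\Phi_{v}(T_{1})\big(\e\overline{k(v)}\e\big)$ (Lemma 4.8(b),(c)) gives short exactness over $\overline{k(v)}$, and Lemma 4.8(d) descends it to $k(v)$-points; so your expectation is accurate, and your proposed proof of that step via Xarles' description is a plausible substitute for Brahm's theorem but is the one piece you leave genuinely unexecuted. What your sheaf-theoretic packaging buys is conceptual economy: the final surjection $C_{\e Q,\e F,S}\ra C_{\e T,\e F,S}$ comes for free from $\mathrm{cd}_{\rm Nis}(U)\le 1$ (a correct theorem of Nisnevich for noetherian schemes of Krull dimension one, though nowhere needed in the paper), and canonicity is transparent. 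What it costs is a layer of verifications the paper's diagram chase avoids: stalkwise exactness of the columns $0\ra\sto_{i}\ra\s T_{i}\ra\Phi^{(i)}\ra 0$ on $U_{\rm Nis}$ (surjectivity of $\s T_{i}(\s O_{v}^{\e h})\ra\Phi_{v}(k(v))$ needs Lang's theorem over the finite field $k(v)$ plus henselian smoothness --- worth saying explicitly), the nine lemma, and your lifting-and-correcting argument over $\s O_{v}^{\e h}$, which is sound as sketched since $H^{1}(F_{v}^{\e h},T_{1})=0$ for invertible $T_{1}$ and $T_{1}(F_{v}^{\e h})=\s T_{1}(\s O_{v}^{\e h})\ra\Phi_{v}(T_{1})(k(v))$ is onto. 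One small imprecision: the conservative family of Nisnevich points of $U$ also includes henselizations at closed points of \'etale $U$-schemes, not only the $\s O_{v}^{\e h}$ for $v\in U_{0}$; this is harmless, since invertibility and quasi-triviality persist under the relevant base changes, but the stalk check should be stated accordingly. Finally, note that Proposition 4.1 is itself extracted from Nisnevich cohomology, so the two routes rest on identical inputs and differ mainly in where the homological algebra is performed --- at the level of sheaves (yours) versus at the level of the explicit four-term presentations (the paper's).
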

\begin{proof} Let $v\notin S$. By Lemma 4.8, parts (b) and (c), and
\cite{B}, Theorem 5.3.1, p.99, there exists an exact sequence
$$
0\ra\Phi_{v}(T_{1})\big(\e\overline{k(v)}\e\big)\ra\Phi_{v}(Q)\big(\e\overline{k(v)}\e\big)\ra\Phi_{v}(T)\big(\e\overline{k(v)}\e\big)\ra 0.
$$
By Lemma 4.8(d), the preceding exact sequence induces an exact sequence
$$
0\ra\Phi_{v}(T_{1})(k(v))\ra\Phi_{v}(Q)(k(v))\ra\Phi_{v}(T)(k(v))\ra
0.
$$
On the other hand, by Lemma 4.8(a), there exists an exact
sequence
$$
0\ra T_{1}(F)\ra Q(F)\ra T(F)\ra 0.
$$
Thus there exists a canonical exact commutative diagram
$$
\xymatrix{0\ar[r] & T_{1}(F)\ar[d]\ar[r]
& Q(F)\ar@{->>}[r]\ar[d] & T(F)\ar[d]\\
0\ar[r]&\displaystyle\bigoplus_{v\notin
S}\Phi_{v}(T_{1})(k(v))\ar[r]&\displaystyle\bigoplus_{v\notin
S}\Phi_{v}(Q)(k(v))\ar@{->>}[r]&\displaystyle\bigoplus_{v\notin
S}\Phi_{v}(T)(k(v)),}
$$
where the vertical maps are the maps \eqref{theta} for the $F$-tori
$T_{1}, Q$ and $T$. The theorem now follows by applying the
snake lemma to the above diagram using the analog of Proposition 4.1
over $F$.
\end{proof}

\begin{remark} Since both $T_{1}$ and $Q$ are direct factors of quasi-trivial tori,
their $S$-class groups $C_{T_{1},\le F,S}$ and $C_{Q,\e F,S}$ can be
described in terms of $S$-ideal class groups of global fields. Thus
the theorem may be interpreted as saying that $C_{\e T,\e F,S}$ can
be ``resolved" in terms of classical objects.
\end{remark}

We now develop some applications of the theorem.

\smallskip

We begin by examining the behavior of the maps
$\vartheta_{v}$ introduced in Section 4 under the Weil restriction functor (see \cite{BLR}, \S7.6, for basic information on this functor).
Let $T$ be any $F$-torus and let $T^{\e\prime}=R_{\e K/F}(T_{\lbe K})$, where $K/F$ is
a finite separable extension. For any prime $v\notin S$,
$T^{\e\prime}_{F_{v}}=\prod_{\, w\mid v} R_{\e K_{w}/F_{v}}(T_{\lbe
K_{w}})$, where the product extends over all primes $w$ of $K$ lying
above $v$. Therefore, by \cite{B}, p.34, there exists an isomorphism of
$G_{k(v)}$-modules
$$
\Phi_{v}\big(\e T^{\e\prime}\e\big)\big(\e\overline{k(v)}\e\big)=\bigoplus_{w\mid
v}\text{Ind}_{G_{k(v)}}^{G_{k(w)}}\Phi_{w}(T_{\lbe
K})\big(\e\overline{k(v)} \e\big).
$$
Thus $H^{\e i}\big(k(v),\Phi_{v}\big(\e T^{\e\prime}\e\big)\big)=\bigoplus_{\, w\mid v}\e H^{\e
i}\big(k(w),\Phi_{w}(T_{\lbe K})\big)$ for all $i\geq 0$. In
particular, $\Phi_{v}(T^{\e\prime})(k(v))=\bigoplus_{\e w\mid
v}\Phi_{w}(T_{\lbe K})(k(w))$ and it follows that the map
$\vartheta_{\lbe v}^{\e\prime}\colon
T^{\e\prime}(F_{v})\ra\Phi_{v}(T^{\e\prime})(k(v))$ may be identified with the map
$$
\bigoplus_{w\mid v}\vartheta_{w}\colon \displaystyle\prod_{w\mid v}T(K_{w})\ra \displaystyle\bigoplus_{w\mid v}\Phi_{w}(T_{\lbe K})(k(w)).
$$
Now there exists a canonical commutative diagram
$$
\xymatrix{T(F_{v})\ar@{^{(}->}[d]\ar[r]^(.45){\vartheta_{v}}
&\Phi_{v}(T)(k(v))\ar[d]^(.45){\mu_{\le v}}\\
\displaystyle\prod_{w\mid v}T(K_{w})\ar[r]^(.4){\oplus\le\vartheta_{\lbe w}}&\displaystyle\bigoplus_{w\mid v}\Phi_{w}(T_{K})(k(w)),}
$$
where the vertical maps are induced by the canonical embedding $T\hookrightarrow T^{\e\prime}$ and the identification
$\Phi_{v}(T^{\e\prime})(k(v))=\bigoplus_{\e w\mid
v}\Phi_{w}(T_{\lbe K})(k(w))$. Thus $\vartheta_{\lbe v}^{\e\prime}=\oplus_{\,w\mid v}\vartheta_{w}$ induces a map
$$
\overline{\vartheta}_{\lbe v}^{\e\prime}\colon \prod_{w\mid v}T(K_{w})\big/T(F_{v})\ra\cok\mu_{\le v}.
$$
Let $\varphi_{\e T,\e S}$ be the composite
\begin{equation}\label{phi T}
T(K)/T(F)\ra\prod_{v\notin S}\left[\textstyle\prod_{\, w\mid v}\lbe T(K_{w})/T(F_{v})\right]\overset{\oplus\e\overline{\vartheta}_{\le v}^{\e\prime}}\longrightarrow\displaystyle\bigoplus_{v\notin S}\cok\mu_{\le v},
\end{equation}
where the first map is the canonical $S$-localization map.

When $T=\bg_{m,F}$, the map $\mu_{\le v}$ agrees with the injection $\bz\ra\oplus_{\e w\mid
v}\,\bz,\,m\mapsto(e_{w}m)_{w\mid v}$ (this follows from the formula ``$\,\text{ord}_{\e
w}\be(x)=e_{w}\e\text{ord}_{\e v}(x)$ for $x\in F_{v}^{\e *}\,$'' already cited) and $\varphi_{\e S}:=\varphi_{\e \bg_{m},\e S}$ is the map
\begin{equation}\label{phi}
K^{*}/F^{*}\ra \bigoplus_{v\notin S}\,(\oplus_{\e w\mid v}\e\bz)/\e\bz
\end{equation}
induced by the maps $K^{*}\ra\oplus_{\e w\mid v}\e\bz, x\mapsto
(\text{ord}_{w}(x))_{w\mid v}$, for $v\notin S$.

Now let $T$ be any invertible $F$-torus and let $K$ be a finite
Galois extension of $F$ (with Galois group $G$) such that $T_{K}$ is
quasi-trivial ($T$ now plays the role of the torus called $T_{1}$ in
the statement of Theorem 6.1). Then the quotient $F$-torus
$P:=R_{K/F}(T_{K})/\e T$ admits the canonical invertible resolution
$$
0\ra T\ra R_{K/F}(T_{K})\ra P\ra 0.
$$
Let $\widetilde{\s T}$ be the N\'eron-Raynaud model of $T_{K}$ over
$\ut$. Then the identity component of the N\'eron-Raynaud model of
$R_{K/F}(T_{K})$ over $U$ is $R_{\e\ut/U}(\tto)$ (see Section 2).
Further, by Remark 2.2, the N\'eron-Raynaud $S$-class group of
$R_{K/F}(T_{K})$ is canonically isomorphic to $C_{\e T,\e K,\e
S_{K}}$. Thus the following is an immediate consequence of Theorem 6.1.

\begin{corollary} Let $T$ be an invertible $F$-torus, let $K$ be
a finite Galois extension of $F$ such that $T_{K}$ is quasi-trivial
and let $P=R_{K/F}(T_{K})/\e T$. Write $\s T$ and $\s P$,
respectively, for the N\'eron-Raynaud models of $\e T$ and $P$ over
$U$ and let $\widetilde{\s T}$ be the N\'eron-Raynaud model of
$T_{K}$ over $\ut$. Then there exists a canonical exact sequence
$$
0\ra\sto(U)\ra\tto\be\big(\e\ut\e\big)\ra\spo(U)\ra C_{\e T,\le
F,S}\ra C_{\e T,\e K,\e S_{K}}\ra C_{\e P,\e F,\e S}\ra 0.\qed
$$
\end{corollary}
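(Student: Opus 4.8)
The plan is to apply Theorem 6.1 directly to the invertible resolution
$$
0\ra T\ra R_{K/F}(T_{K})\ra P\ra 0
$$
displayed just before the statement, and then to rewrite two of the six terms it produces using facts already recorded in Section 2 and in Remark 2.2. No new computation is needed; the content is the bookkeeping of these two identifications.

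First I would check that the displayed sequence really is an invertible resolution of $P$ in the sense of Theorem 6.1. The torus $T$ is invertible by hypothesis, and $R_{K/F}(T_{K})$ is quasi-trivial: since $T_{K}$ is quasi-trivial its character module is a permutation $G_{\be K}$-module, and inducing it up to $G_{\lbe F}$ yields a permutation $G_{\lbe F}$-module, so the associated torus $R_{K/F}(T_{K})$ is quasi-trivial. Hence the sequence fits the hypotheses of Theorem 6.1 with $T$ in the role of $T_{1}$, with $R_{K/F}(T_{K})$ in the role of $Q$, and with $P$ in the role of the quotient torus (called $T$ in that theorem). Writing $\s Q$ for the N\'eron-Raynaud model of $R_{K/F}(T_{K})$ over $U$, Theorem 6.1 then supplies a canonical exact sequence
$$
0\ra\sto(U)\ra\sqo(U)\ra\spo(U)\ra C_{\e T,\le F,\e S}\ra C_{\e Q,\e F,\e S}\ra C_{\e P,\e F,\e S}\ra 0.
$$

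It remains to identify the second and fifth terms. For the second term, the discussion in Section 2 identifies the identity component of the N\'eron-Raynaud model of $R_{K/F}(T_{K})$ over $U$ with $R_{\,\ut/U}\be\big(\tto\big)=f_{*}\tto$; evaluating on $U$ and using $R_{\,\ut/U}\be\big(\tto\big)(U)=\tto\be\big(\e\ut\e\big)$ gives $\sqo(U)=\tto\be\big(\e\ut\e\big)$. For the fifth term, Remark 2.2 furnishes a canonical isomorphism $C_{\e Q,\e F,\e S}=C_{\e R_{K/F}(T_{K}),\e F,\e S}\simeq C_{\e T,\e K,\e S_{K}}$. Substituting these two identifications into the displayed sequence yields the asserted one. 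Since the argument is otherwise formal, the only point requiring attention is that these rewritings are compatible with the maps produced by Theorem 6.1; this is automatic, as both identifications are already canonical at the level of Nisnevich sheaves on $U$ (the first through the Weil-restriction/pushforward identity $R_{\,\ut/U}(\tto)=f_{*}\tto$, the second through the chain of isomorphisms underlying Remark 2.2), both resting on the exactness of $f_{*}$ for the Nisnevich topology.
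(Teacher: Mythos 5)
Your proposal is correct and takes essentially the same approach as the paper: the paper likewise obtains the corollary as an immediate consequence of Theorem 6.1 applied to the canonical invertible resolution $0\ra T\ra R_{K/F}(T_{K})\ra P\ra 0$, identifying the identity component of the N\'eron-Raynaud model of $R_{K/F}(T_{K})$ over $U$ with $R_{\,\ut/U}\be\big(\tto\big)$ (so that $\sqo(U)=\tto\be\big(\e\ut\e\big)$) and invoking Remark 2.2 to identify $C_{\e Q,\e F,\e S}$ with $C_{\e T,\e K,\e S_{K}}$. Your explicit check that $R_{K/F}(T_{K})$ is quasi-trivial is left implicit in the paper but is exactly the right justification.
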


\smallskip

The map $C_{\e T,\le F,S}\ra C_{\e T,\e K,\e S_{K}}$ appearing in
the exact sequence of the corollary is the composite of the
capitulation map \eqref{cap} and the canonical inclusion $C_{\e T,\e
K,\e S_{K}}^{\e G}\hookrightarrow C_{\e T,\e K,\e S_{K}}$ (see
Section 2). On the other hand, by Proposition 4.1, $\spo(U)$ is the
kernel of the map $P(F)\ra\bigoplus_{\,v\notin S}\Phi_{v}(P)(k(v))$,
and the proof of Theorem 6.1 shows that the latter map can be
identified with the map \eqref{phi T}. Thus the following holds.

\begin{corollary} Let $T$ be an invertible $F$-torus, let $K$ be
a finite Galois extension of $F$ such that $T_{\be K}$ is
quasi-trivial and let $P=R_{K/F}(T_{\be K})/\e T$. Let $ j_{\e T,\e
K/F,\e S}\colon C_{\e T,\le F,S}\ra C_{\e T,\e K,\e S_{K}}^{\e G}$
be the $S$-capitulation map for $T$. Then there exist canonical
exact sequences
$$
0\ra{\rm{Coker}}\, j_{\e T,\e K/F,\e S}\ra C_{\e P,\e F,S}\ra C_{\e T,\e K,\e S_{K}}/C_{\e T,\e K,\e S_{K}}^{\e G}\ra 0
$$
and
$$
0\ra\tto\be\big(\e\ut\e\big)/\e\sto(U)\ra\krn\varphi_{\, T,\e
S}\ra{\rm{Ker}}\,j_{\e T,\e K/F,\e S}\ra 0,
$$
where $\varphi_{\, T,\e S}$ is the map \eqref{phi T}.
\end{corollary}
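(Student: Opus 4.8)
The plan is to obtain both sequences by dissecting the six-term exact sequence of Corollary 6.3,
\[
0\ra\sto(U)\ra\tto(\ut)\overset{b}\ra\spo(U)\overset{c}\ra C_{\e T,\le F,S}\overset{d}\ra C_{\e T,\e K,\e S_{K}}\overset{e}\ra C_{\e P,\e F,\e S}\ra 0,
\]
using the two identifications recorded just before the statement. First, the arrow $d$ factors as $d=\iota\circ j_{\e T,\e K/F,\e S}$, where $\iota\colon C_{\e T,\e K,\e S_{K}}^{\e G}\hookrightarrow C_{\e T,\e K,\e S_{K}}$ is the canonical inclusion. Second, $\spo(U)=\krn\varphi_{\e T,\e S}$: by Proposition 4.1 over $F$, $\spo(U)$ is the kernel of the structure map $P(F)\ra\bigoplus_{v\notin S}\Phi_{v}(P)(k(v))$, and the proof of Theorem 6.1 identifies this map with \eqref{phi T} (here $P(F)=T(K)/T(F)$ because $H^{1}(F,T)=0$ by Lemma 4.8(a)).

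For the first sequence, the tail $C_{\e T,\le F,S}\overset{d}\ra C_{\e T,\e K,\e S_{K}}\overset{e}\ra C_{\e P,\e F,\e S}\ra 0$ exhibits $C_{\e P,\e F,\e S}$ as $\cok d$. Since $\iota$ is injective, $\img d=\img j_{\e T,\e K/F,\e S}$, regarded as a subgroup of $C_{\e T,\e K,\e S_{K}}^{\e G}\subseteq C_{\e T,\e K,\e S_{K}}$. I would then invoke the standard short exact sequence of successive quotients attached to the chain $\img j_{\e T,\e K/F,\e S}\subseteq C_{\e T,\e K,\e S_{K}}^{\e G}\subseteq C_{\e T,\e K,\e S_{K}}$, namely
\[
0\ra C_{\e T,\e K,\e S_{K}}^{\e G}/\img j_{\e T,\e K/F,\e S}\ra C_{\e T,\e K,\e S_{K}}/\img j_{\e T,\e K/F,\e S}\ra C_{\e T,\e K,\e S_{K}}/C_{\e T,\e K,\e S_{K}}^{\e G}\ra 0.
\]
Identifying the left-hand term with $\cok j_{\e T,\e K/F,\e S}$ (the target of $j_{\e T,\e K/F,\e S}$ is $C_{\e T,\e K,\e S_{K}}^{\e G}$) and the middle term with $C_{\e P,\e F,\e S}=\cok d$ yields the first asserted sequence.

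For the second sequence, the head $0\ra\sto(U)\ra\tto(\ut)\overset{b}\ra\spo(U)\overset{c}\ra C_{\e T,\le F,S}$ of the Corollary 6.3 sequence is exact. Again because $\iota$ is injective, $\krn c=\krn(\iota\circ j_{\e T,\e K/F,\e S})=\krn j_{\e T,\e K/F,\e S}$, so $\img c=\krn j_{\e T,\e K/F,\e S}$. Exactness at $\tto(\ut)$ and at $\spo(U)$ identifies $\img b$ simultaneously with $\tto(\ut)/\sto(U)$ and with $\krn c$, whence
\[
0\ra\tto(\ut)/\sto(U)\ra\spo(U)\overset{c}\ra\krn j_{\e T,\e K/F,\e S}\ra 0
\]
is exact. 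Substituting the identification $\spo(U)=\krn\varphi_{\e T,\e S}$ produces the second asserted sequence.

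Since both sequences are extracted simply by splitting an exact sequence already in hand, I anticipate no serious obstacle. The only points requiring care — all furnished by the discussion preceding the statement — are the factorization $d=\iota\circ j_{\e T,\e K/F,\e S}$ with $\iota$ injective, which is precisely what permits passing freely between the kernels and images of $d$ and of $j_{\e T,\e K/F,\e S}$, and the identification $\spo(U)=\krn\varphi_{\e T,\e S}$.
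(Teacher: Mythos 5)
Your proposal is correct and follows essentially the same route as the paper: Corollary 6.4 is obtained by splitting the six-term sequence of Corollary 6.3 using exactly the two identifications recorded in the intervening paragraph, namely the factorization of $C_{\e T,\le F,S}\ra C_{\e T,\e K,\e S_{K}}$ as the capitulation map $j_{\e T,\e K/F,\e S}$ followed by the injective inclusion $C_{\e T,\e K,\e S_{K}}^{\e G}\hookrightarrow C_{\e T,\e K,\e S_{K}}$, and the identification $\spo(U)=\krn\varphi_{\e T,\e S}$ coming from Proposition 4.1 and the proof of Theorem 6.1 (with $P(F)=T(K)/T(F)$ via Lemma 4.8(a), as you note). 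Your extraction of the two short exact sequences from the head and tail of that sequence is exactly the argument the paper leaves implicit.
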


\begin{remark} S.-I.Katayama \cite{K}, Theorem 2, and V.Voskresenskii \cite{V}, Chapter 7, \S20, obtained results
for the class group of the standard model of
$P=R_{K/F}(\bg_{m,K})/\e \bg_{m,F}$ which bear some resemblance to
the case $T=\bg_{m,F}$ of Corollary 6.4.
\end{remark}

Specializing Corollary 6.4 to $T=\bg_{m,\e F}$, we obtain the
following result on the classical $S$-capitulation map $j_{K/F,\e
S}$ which supplements those obtained in \cite{CAmb}.

\begin{corollary} Let $K/F$ be a finite Galois extension of global fields with Galois group $G$ and let $P=R_{K/F}(\bg_{m,K})/\e \bg_{m,F}$ be the projective group
torus. Then there exist canonical exact sequences
$$
0\ra{\rm{Coker}}\, j_{K/F,\e S}\ra C_{\e P,\e F,S}\ra C_{K,\e S_{K}}/C_{K,\e S_{K}}^{\e G}\ra 0
$$
and
$$
0\ra\s O_{K,\e S_{K}}^{*}/\e\s O_{F,\e S}^{*}\ra\krn\!\!\left[\e
K^{*}/F^{*} \overset{\varphi_{_{\be
S}}}\longrightarrow\bigoplus_{v\notin S}\, (\oplus_{\e w\mid
v}\e\bz)/\e\bz\e\right]\ra\krn j_{\e K/F,\e S}\ra 0,
$$
where $\varphi_{S}$ is the map \eqref{phi}.\qed
\end{corollary}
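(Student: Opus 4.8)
The plan is to obtain the two sequences as the special case $T=\bg_{m,\e F}$ of Corollary 6.4. First I would check that the hypotheses apply: $\bg_{m,\e F}$ is quasi-trivial (its character module is the trivial permutation $G_{\lbe F}$-module $\bz$), hence invertible, and its base change $\bg_{m,\e K}$ is quasi-trivial over $K$. With this choice, $P=R_{\e K/F}(\bg_{m,\e K})/\e\bg_{m,\e F}$ is precisely the projective group torus of the statement, so Corollary 6.4 applies and furnishes two exact sequences that I then rewrite in classical terms.

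The bulk of the work is bookkeeping, carried out by specializing Proposition 4.1 and using the convention (recorded in Section 2) that $T$ is dropped from the notation when $T=\bg_{m,\e F}$. The analog over $F$ of Proposition 4.1 for $\bg_{m,\e F}$ reads $1\ra\sto(U)\ra F^{*}\ra\bigoplus_{v\notin S}\bz\ra C_{\e F,\e S}\ra 0$, with middle map $x\mapsto(\text{ord}_{v}(x))_{v}$, since $\Phi_{v}(k(v))=\bz$ for $\bg_m$; hence $\sto(U)=\s O_{F,\e S}^{*}$ and $C_{\e F,\e S}$ is the classical $S$-ideal class group. Applying Proposition 4.1 itself to $\bg_{m,\e F}$, so that $T_{K}=\bg_{m,\e K}$, gives $1\ra\tto\be\big(\ut\big)\ra K^{*}\ra\bigoplus_{v\notin S}\bigoplus_{w\mid v}\bz\ra C_{K,\e S_{K}}\ra 0$, whence $\tto\be\big(\ut\big)=\s O_{K,\e S_{K}}^{*}$ and $C_{K,\e S_{K}}$ is the classical $S_{K}$-ideal class group of $K$. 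Under these identifications the capitulation map $j_{\e\bg_{m,\e F},\e K/F,\e S}$ becomes the classical map $j_{\e K/F,\e S}$.

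It remains to identify the map $\varphi_{\e\bg_{m},\e S}$ with the map $\varphi_{\e S}$ of \eqref{phi}; this was already done in the paragraph preceding Corollary 6.4, where $\mu_{\e v}$ was shown to agree with the inclusion $m\mapsto(e_{w}m)_{w\mid v}$, so that $\varphi_{\e S}$ is the displayed map $K^{*}/F^{*}\ra\bigoplus_{v\notin S}(\oplus_{w\mid v}\bz)/\bz$.

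Substituting all of these identifications into the two exact sequences of Corollary 6.4 then yields verbatim the two sequences of the present corollary; in particular the middle term of the second sequence becomes $\krn\varphi_{\e S}=\krn\!\left[K^{*}/F^{*}\ra\bigoplus_{v\notin S}(\oplus_{w\mid v}\bz)/\bz\right]$ by the definition \eqref{phi}. There is no genuine obstacle, the result being a pure specialization; the only point requiring a moment's care is the identification of $\sto(U)$ and $\tto\be\big(\ut\big)$ with the $S$- and $S_{K}$-unit groups, which is immediate from Proposition 4.1 once one observes that $\Phi_{v}(k(v))=\bz$ for $\bg_m$.
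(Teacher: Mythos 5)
Your proof is correct and follows the paper's own route exactly: the paper obtains this corollary by the same direct specialization of Corollary 6.4 to $T=\bg_{m,\e F}$, with the identifications $\sto(U)=\s O_{F,\e S}^{*}$, $\tto\be\big(\ut\big)=\s O_{K,\e S_{K}}^{*}$ and $\varphi_{\e\bg_{m},\e S}=\varphi_{\e S}$ already recorded in the text preceding the corollary. Your explicit bookkeeping via Proposition 4.1 simply makes visible what the paper leaves implicit.
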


Now let $T$ be an arbitrary $F$-torus. The {\it Ono invariants} of
$T$ are defined as follows. Choose a flasque resolution
$$
0\ra T_{1}\ra Q_{1}\ra T\ra 0
$$
and set
$$
E_{f,\e S}(T)=h_{\e Q_{1},\e F,\e S}/\e h_{\e T_{1},\e F,\e S}\e h_{\e
T,\e F,\e S}.
$$
Then $E_{f,\e S}(T)$ is in fact independent of the flasque
resolution of $T$ used to define it and depends only on $S$ and $T$
(see \cite{V}, p.50, lines 18-28). Similarly, if $0\ra T\ra Q_{2}\ra
T_{2}\ra 0$ is a coflasque resolution of $T$, then
$$
E_{\e c,\e S}(T)=h_{\e Q_{2},\e F,\e S}/\e h_{\e T_{2},\e F,\e S}\e
h_{\e T,\e F,\e S}
$$
is an invariant of the pair $S,T\e$\footnote{\e Even though the
duality $T\mapsto T^{\vee}$ transforms flasque resolutions of $T$
into coflasque resolutions of the dual torus $T^{\vee}$, and
viceversa, it is unlikely that the equality $E_{f,\e S}(T)=E_{c,\e
S}\big(T^{\vee}\big)$ will hold for an arbitrary torus $T$.}. The
latter invariant was studied in \cite{K,M,O,Sa}, for $T$ equal
either to a norm or a binorm torus, using certain integral models of
$T$ and the canonical coflasque resolutions that define these tori.
The invariant $E_{f,\e S}(T)$ was computed in \cite{K}, \S4, for
$T=R_{K/F}(\bg_{m,K})/\bg_{m,F}$, where $F$ is a number field, using
the standard model of $T$ and the canonical flasque resolution that
defines this torus.

\begin{theorem} Let $T$ be an invertible $F$-torus and let $K$ be
a finite Galois extension of $F$, with Galois group $G$, such that
$T_{K}$ is quasi-trivial and has multplicative reduction over $\ut$.
Let $P=R_{K/F}(T_{K})/\e T$. Then
$$
E_{f,\e S}(P)=\left[\!\!\Sha^{1}_{S}\big(G,\tto\lbe\big(\ut\big)\big)\right]^{-1}.
$$
\end{theorem}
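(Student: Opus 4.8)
The plan is to reduce the Ono invariant to a ratio of class numbers and then read off that ratio from the tail of the exact sequence in Corollary 6.3. First I would note that the canonical resolution $0\ra T\ra R_{K/F}(T_{K})\ra P\ra 0$ defining $P$ is a \emph{flasque} resolution: $T$ is invertible, hence flasque, while $R_{K/F}(T_{K})$ is quasi-trivial because $T_{K}$ is (transitivity of Weil restriction carries $\prod R_{L_{i}/K}(\bg_{m})$ to $\prod R_{L_{i}/F}(\bg_{m})$). Since $E_{f,\e S}(P)$ is independent of the chosen flasque resolution, it may be computed from this one, which gives $E_{f,\e S}(P)=h_{\e R_{K/F}(T_{K}),F,S}\big/\big(h_{\e T,F,S}\,h_{\e P,F,S}\big)$. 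By Remark 2.2 the numerator equals $[C_{\e T,K,S_{K}}]$, so the problem reduces to evaluating $[C_{\e T,K,S_{K}}]\big/\big([C_{\e T,F,S}]\,[C_{\e P,F,S}]\big)$.

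Next I would extract the finite part of the six-term sequence of Corollary 6.3,
$$0\ra\sto(U)\ra\tto(\ut)\ra\spo(U)\ra C_{\e T,F,S}\ra C_{\e T,K,S_{K}}\ra C_{\e P,F,S}\ra 0.$$
The three leading terms are only finitely generated, but they play no role in the order computation: the image of $\spo(U)\ra C_{\e T,F,S}$ is exactly the kernel of $C_{\e T,F,S}\ra C_{\e T,K,S_{K}}$, so the tail collapses to a four-term exact sequence of \emph{finite} groups
$$0\ra I\ra C_{\e T,F,S}\ra C_{\e T,K,S_{K}}\ra C_{\e P,F,S}\ra 0,\qquad I:=\krn\!\big[C_{\e T,F,S}\ra C_{\e T,K,S_{K}}\big].$$
As recorded just after Corollary 6.3, the map $C_{\e T,F,S}\ra C_{\e T,K,S_{K}}$ is the composite of the $S$-capitulation map $j_{\e T,K/F,S}$ with the injection $C_{\e T,K,S_{K}}^{G}\hookrightarrow C_{\e T,K,S_{K}}$, so $I=\krn j_{\e T,K/F,S}$.

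Finally I would invoke Proposition 4.9, whose hypotheses hold here verbatim — $T$ is invertible and $T_{K}$ has multiplicative reduction over $\ut$ — to conclude $I=\krn j_{\e T,K/F,S}=\Sha^{1}_{S}\big(G,\tto(\ut)\big)$. Multiplicativity of orders in the four-term exact sequence yields $[C_{\e T,F,S}]\,[C_{\e P,F,S}]=[I]\,[C_{\e T,K,S_{K}}]$, whence
$$E_{f,\e S}(P)=\frac{[C_{\e T,K,S_{K}}]}{[C_{\e T,F,S}]\,[C_{\e P,F,S}]}=\frac{1}{[I]}=\big[\Sha^{1}_{S}\big(G,\tto(\ut)\big)\big]^{-1},$$
as claimed. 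There is no serious obstacle: the argument is entirely formal once Corollary 6.3 and Proposition 4.9 are in hand. The only points requiring care are the two bookkeeping steps above — checking that the displayed resolution is flasque so that $E_{f,\e S}$ is computed from the right data, and isolating the finite four-term tail — together with the identification $I=\Sha^{1}_{S}$ supplied by Proposition 4.9; notably, the infinite leading terms and all rank/regulator information drop out of the final count.
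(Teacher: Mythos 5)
Your proposal is correct and is essentially the paper's own argument: the author's proof consists precisely of invoking Corollary 6.3 together with the comment following it to conclude $E_{f,\e S}(P)=[\krn j_{\e T,\e K/F,\e S}]^{-1}$, and then applying Proposition 4.9 to identify $\krn j_{\e T,\e K/F,\e S}$ with $\Sha^{1}_{S}\big(G,\tto\lbe\big(\ut\big)\big)$. Your write-up simply makes explicit the bookkeeping the paper leaves implicit — that the canonical invertible resolution is in particular a flasque resolution, Remark 2.2 for the numerator, and the order count in the finite four-term tail — and all of it is sound.
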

\begin{proof} Corollary 6.3 and the comment that follows it show that
$E_{f,\e S}(P)=[\krn j_{\e T,\e K/F,\e S}]^{-1}$, where $j_{\e T,\e K/F,\e S}$ is the $S$-capitulation map for
the torus $T$. The theorem now follows from Proposition 4.9.
\end{proof}

\section{Flasque resolutions and class groups}

As noted in the Introduction, every $F$-torus $T$ admits a flasque
resolution
\begin{equation}\label{f1}
0\ra T_{1}\ra Q\ra T\ra 0,
\end{equation}
where $Q$ is quasi-trivial and $T_{1}$ is flasque. Since, in
general, the induced sequence of $F$-rational points $0\ra
T_{1}(F)\ra Q(F)\ra T(F)\ra 0$ will not be exact and the sheaf
$R^{1}j_{*}T_{1}$ will not be trivial for the smooth topology on
$U$, it is unreasonable to expect that \eqref{f1} will induce simple
resolutions of $C_{\e T,\e F,\e S}$ such as that of Theorem 6.1 (see
the proof of that theorem). However, we will see below that
\eqref{f1} does induce (at least under a certain simplifying
assumption) a resolution of $C_{\e T,\e F,\e S}$ which is
interesting in spite of its increased complexity.

\smallskip

We begin by reviewing the concept of $R$-equivalence on tori. Let
$T$ be an $F$-torus. Two points $x,y\in T(F)$ are said to be {\it
$R$-equivalent}, written $x\sim y$, if there exists an $F$-rational
map $f\colon \ba _{\lbe F}^{\be 1}\dashrightarrow T$, defined at $0$
and $1$, such that $f(0)=x$ and $f(1)=y\e$\footnote{\,This is in
fact the definition of {\it strict} (or direct) $R$-equivalence. We
are using the fact that, on a torus, $R$-equivalence and strict
$R$-equivalence coincide \cite{CTS}, Theorem 2, p.199.}. If $A$ is
any subgroup of $T(F)$, then the subset of $A$ of all points which
are $R$-equivalent to $1$ is in fact a subgroup and will be denoted
by $R\e A$. We will write $A/R$ for $A/R\e A$. Then any flasque
resolution \eqref{f1} induces a short exact sequence
\begin{equation}\label{rat points}
0\ra T_{1}(F)\ra Q(F)\ra R\, T(F)\ra 0
\end{equation}
and an isomorphism
$$
T(F)/R=H^{1}(F,T_{1}).
$$
See \cite{CTS}, Theorem 2, p.199, or \cite{V}, \S17.1. Now recall
the map $\vartheta_{_{\! S}}\colon T(F)\ra\bigoplus_{v\notin
S}\Phi_{v}(k(v))$ defined at the beginning of Section 4. It induces
a map
\begin{equation}\label{theta R}
\vartheta_{_{\! S}}^{\le R}\colon R\, T(F)\ra\bigoplus_{v\notin
S}\Phi_{v}(k(v))
\end{equation}
whose cokernel will be denoted by $C_{\e T,\e F,\e S}^{\e R}$ and
called the {\it $R$-equivalence class group} of $T$. Thus $C_{\e
T,\e F,\e S}^{\e R}$ is defined by the exactness of the sequence
\begin{equation}\label{R seq}
0\ra R\,\sto(U)\ra R\, T(F)\overset{\vartheta_{_{\!\be S}}^{\lbe R}}
\longrightarrow\displaystyle\bigoplus_{v\notin S}\Phi_{v}(k(v)) \ra
C_{\e T,\e F,\e S}^{\e R}\ra 0.
\end{equation}
Clearly, if $R$-equivalence is {\it trivial}\, on $T(F)$ (as is the
case, for example, if $T$ admits an invertible resolution by Lemma
4.8(a)), then $C_{\e T,\e F,\e S}^{\e R}$ and $C_{\e T,\e F,\e S}$
coincide. In general, the following holds.

\begin{lemma} There exists an exact sequence of finite abelian groups
$$
0\ra\sto(U)/R\ra T(F)/R\ra C_{\e T,\e F,\e S}^{\e R}\ra C_{\e T,\e F,\e S}\ra 0,
$$
where $ C_{\e T,\e F,\e S}^{\e R}$ is defined by \eqref{R seq}.
\end{lemma}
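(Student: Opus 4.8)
The plan is to realize the desired four-term sequence as the output of the snake lemma applied to a morphism between the presentation of $C_{\e T,\e F,\e S}$ and the presentation \eqref{R seq} of $C_{\e T,\e F,\e S}^{\e R}$, and then to collapse its first two terms. First I would record the two presentations already at hand. The analog of Proposition 4.1 over $F$ gives the exact sequence
$$
0\ra\sto(U)\ra T(F)\overset{\vartheta_{_{\! S}}}\longrightarrow\displaystyle\bigoplus_{v\notin S}\Phi_{v}(k(v))\ra C_{\e T,\e F,\e S}\ra 0,
$$
so that $\sto(U)=\krn\vartheta_{_{\! S}}$ and $C_{\e T,\e F,\e S}=\cok\vartheta_{_{\! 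S}}$; and \eqref{R seq} exhibits $R\,\sto(U)=\krn\vartheta_{_{\! S}}^{\le R}$ and $C_{\e T,\e F,\e S}^{\e R}=\cok\vartheta_{_{\! S}}^{\le R}$, where $\vartheta_{_{\! S}}^{\le R}$ is by \eqref{theta R} the restriction of $\vartheta_{_{\! S}}$ to the subgroup $R\,T(F)$.

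Next I would form the commutative diagram with exact rows
$$
\xymatrix{
0\ar[r]&R\,T(F)\ar[r]\ar[d]^{\vartheta_{_{\! S}}^{\le R}}&T(F)\ar[r]\ar[d]^{\vartheta_{_{\! S}}}&T(F)/R\ar[r]\ar[d]&0\\
0\ar[r]&\displaystyle\bigoplus_{v\notin S}\Phi_{v}(k(v))\ar@{=}[r]&\displaystyle\bigoplus_{v\notin S}\Phi_{v}(k(v))\ar[r]&0\ar[r]&0
}
$$
whose top row is exact by the definition $T(F)/R=T(F)/R\,T(F)$ and whose left-hand square commutes precisely because $\vartheta_{_{\! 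S}}^{\le R}$ is the restriction of $\vartheta_{_{\! S}}$; the right-hand vertical map is forced to be $0$. Applying the snake lemma and reading off the kernels and cokernels of the three vertical maps from the two presentations above yields the exact sequence
$$
0\ra R\,\sto(U)\ra\sto(U)\ra T(F)/R\ra C_{\e T,\e F,\e S}^{\e R}\ra C_{\e T,\e F,\e S}\ra 0,
$$
in which the map $\sto(U)\ra T(F)/R$ is the inclusion $\sto(U)\hookrightarrow T(F)$ followed by the projection, hence has kernel $\sto(U)\cap R\,T(F)=R\,\sto(U)$. Since the leading map $R\,\sto(U)\hookrightarrow\sto(U)$ is injective with image exactly this kernel, I may collapse the first two terms, replacing them by $\sto(U)/R=\sto(U)/R\,\sto(U)$, which produces the asserted sequence
$$
0\ra\sto(U)/R\ra T(F)/R\ra C_{\e T,\e F,\e S}^{\e R}\ra C_{\e T,\e F,\e S}\ra 0.
$$

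Finally I would verify finiteness. The group $C_{\e T,\e F,\e S}$ is finite (Section 2), and $T(F)/R$ is finite because $T(F)/R\simeq H^{1}(F,T_{1})$ for a flasque torus $T_{1}$ in a flasque resolution of $T$ (\cite{CTS}, Theorem 2), and $H^{1}(F,T_{1})$ is finite over a global field. The remaining terms then follow: $\sto(U)/R$ embeds into the finite group $T(F)/R$, and $C_{\e T,\e F,\e S}^{\e R}$ is an extension of the finite group $C_{\e T,\e F,\e S}$ by the finite quotient $(T(F)/R)/(\sto(U)/R)$. The proof is essentially formal, so the only points genuinely requiring care are the finiteness input for $T(F)/R$ and the routine check that the snake-lemma maps coincide with the maps implicit in the statement (so that the collapse in the third step is legitimate); I expect the former to be the main substantive ingredient, with everything else diagram chasing once the two presentations are in place.
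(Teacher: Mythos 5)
Your proof is correct and essentially identical to the paper's: Lemma 7.1 is proved there by applying the snake lemma to exactly the commutative diagram you construct (top row $0\ra R\e T(F)\ra T(F)\ra T(F)/R\ra 0$, bottom row the identity on $\bigoplus_{v\notin S}\Phi_{v}(k(v))$, right vertical map zero), collapsing the first two terms in the same way, with finiteness quoted from \cite{CTS}, Corollary 2, p.200. One small precision on your finiteness step: $H^{1}(F,T_{1})$ is finite because $T_{1}$ is \emph{flasque} (for an arbitrary $F$-torus $H^{1}(F,\cdot)$ can be infinite over a global field, e.g. $F^{*}/N_{K/F}K^{*}$ for a norm torus), and this flasque case is precisely the content of the result of Colliot-Th\'el\`ene and Sansuc that the paper cites.
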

\begin{proof} This follows at once from the exact commutative diagram
$$
\xymatrix{0\ar[r] & R\e T(F)\ar[d]\ar[r]
& T(F)\ar[r]\ar[d] & T(F)/R\ar[d]\ar[r]&0\\
0\ar[r]&\displaystyle\bigoplus_{v\notin
S}\Phi_{v}(k(v))\ar@{=}[r]&\displaystyle\bigoplus_{v\notin
S}\Phi_{v}(k(v))\ar[r]&0\ar[r]&0}
$$
and \cite{CTS}, Corollary 2, p.200 (for the finiteness statement).
\end{proof}

\begin{remark} If $T$ is split by a finite Galois extension of degree $n$ and
exponent $e$, then $T(F)/R$ is annihilated by $n/e$ \cite{CTS},
Corollary 4, p.200. Thus $C_{\e T,\e F,\e S}^{\e R}$ and $C_{\e T,\e
F,\e S}$ have the same $p$-primary components for every prime $p$
which does not divide $n/e$.
\end{remark}

Now, for each $v\notin S$, the exact sequence
$$
0\ra\Phi_{v}\big(\e\overline{k(v)}\e\big)_{\text{tors}}\ra
\Phi_{v}\big(\e\overline{k(v)}\e\big)\ra
\Phi_{v}\big(\e\overline{k(v)}\e\big)/\text{tors}\ra 0
$$
and the isomorphism of $G_{k(v)}$-modules
$\Phi_{v}\big(\e\overline{k(v)}\e\big)_{\text{tors}}=H^{1}(F_{\lbe
v}^{\le\rm{nr}},T)$ \cite{mrl}, proof of Lemma 3.3, induce an exact
sequence
$$\begin{array}{rcl}
0\ra H^{1}(F_{\lbe
v}^{\le\rm{nr}},T)^{G_{k(v)}}\ra\Phi_{v}(k(v))&\overset{\eta_{_v}}\longrightarrow&
\big(\Phi_{v}\big(\e\overline{k(v)}\e\big)/\text{tors}\big)^{G_{k(v)}}\\\\
&\longrightarrow & H^{1}\big(k(v),H^{1}(F_{\lbe
v}^{\le\rm{nr}},T)\big).
\end{array}
$$
Clearly, if $T$ splits over $F_{\lbe v}^{\le\rm{nr}}$, i.e., has
multiplicative reduction at $v$, then $H^{1}(F_{\lbe
v}^{\le\rm{nr}},T)=0$ by Hilbert's Theorem 90. Consequently, the
preceding exact sequences induce an exact sequence
\begin{equation}\label{7.5}
\begin{array}{rcl} \displaystyle\bigoplus_{v\e\in\e
B\e\setminus S}H^{1}(F_{\lbe
v}^{\le\rm{nr}},T)^{G_{k(v)}}&\hookrightarrow&\displaystyle\bigoplus_{v\notin
S}\Phi_{v}(k(v))\overset{\eta_{_S}}\longrightarrow
\displaystyle\bigoplus_{v\notin
S}\big(\Phi_{v}\big(\e\overline{k(v)}\e\big)/\text{tors}\big)^{G_{k(v)}}\\\\
&\ra& \displaystyle\bigoplus_{v\e\in\e B\e\setminus
S}H^{1}\big(k(v),H^{1}(F_{\lbe v}^{\le\rm{nr}},T)\big),
\end{array}
\end{equation}
where $\eta_{_S}=\bigoplus_{\,v\notin S}\eta_{_v}$ and $B$ is the
set of primes of primes of $F$ where $T$ has bad reduction. Now let
$\vartheta_{_{\!\be S}}^{\e *}$ denote the composite
\begin{equation}\label{theta*}
T(F)\overset{\vartheta_{_{\be
S}}}\longrightarrow\displaystyle\bigoplus_{v\notin S}
\Phi_{v}(k(v))\overset{\eta_{_S}}\longrightarrow
\displaystyle\bigoplus_{v\notin
S}\big(\Phi_{v}\big(\e\overline{k(v)}\e\big)/\text{tors}\big)^{G_{k(v)}}
\end{equation}
and set $C_{\e T,\e F,\e S}^{\e *}=\cok\vartheta_{_{\be S}}^{*}$.
Thus $C_{\e T,\e F,\e S}^{\e *}$ is defined by the exactness of the
sequence
\begin{equation}\label{seq star}
0\ra \sto(U)\ra T(F)\overset{\vartheta_{_{\!\! S}}^{\e *}}
\longrightarrow\displaystyle\bigoplus_{v\notin
S}\big(\Phi_{v}\big(\e\overline{k(v)}\e\big)/\text{tors}\big)^{G_{k(v)}}
\ra C_{\e T,\e F,\e S}^{\e *}\ra 0.
\end{equation}

\begin{lemma} There exists a canonical exact sequence
$$\begin{array}{rcl}
0\ra\sto(U)\ra\krn\vartheta_{_{\!\be S}}^{\e
*}&\ra&\displaystyle\bigoplus_{v\e\in\e B\e\setminus S}H^{1}(F_{\lbe
v}^{\e\rm{nr}},T)^{G_{k(v)}} \ra C_{\e T,\e F,\e S}\\\\
&\ra& C_{\e T,\e F,\e S}^{\e *}\ra \displaystyle\bigoplus_{v\e\in\e
B\e\setminus\e S}H^{1}\big(k(v),H^{1}(F_{\lbe
v}^{\le\rm{nr}},T)\big),
\end{array}
$$
where $\vartheta_{_{\!\be S}}^{\e *}$ is the map \eqref{theta*} and
$C_{\e T,\e F,\e S}^{\e *}$ is defined by \eqref{seq star}. In
particular, if $S\supset B$, then $C_{\e T,\e F,\e S}^{\e *}=C_{\e
T,\e F,\e S}$ and $\,\krn\vartheta_{_{\!\be S}}^{\e *}=\sto(U)$.
\end{lemma}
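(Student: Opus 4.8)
The plan is to read the map $\vartheta_{_{\!\be S}}^{\e *}$ of \eqref{theta*} as the composite $\eta_{_S}\circ\vartheta_{_{\be S}}$ and to feed the two exact sequences governing its factors into the standard kernel--cokernel exact sequence attached to a composition. Recall that a composition $A\overset{f}\ra B\overset{g}\ra C$ of homomorphisms of abelian groups gives rise to a canonical exact sequence
$$
0\ra\krn f\ra\krn(g\circ f)\overset{f}\longrightarrow\krn g\ra\cok f\ra\cok(g\circ f)\ra\cok g\ra 0,
$$
as one checks by a direct diagram chase (or by applying the snake lemma). I would apply this with $f=\vartheta_{_{\be S}}\colon T(F)\ra\bigoplus_{v\notin S}\Phi_{v}(k(v))$ and $g=\eta_{_S}$, so that $g\circ f=\vartheta_{_{\!\be S}}^{\e *}$ and the second and fifth terms are precisely $\krn\vartheta_{_{\!\be S}}^{\e *}$ and $\cok\vartheta_{_{\!\be S}}^{\e *}=C_{\e T,\e F,\e S}^{\e *}$, the latter by the definition \eqref{seq star}.

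The remaining four terms are already available in the excerpt. The analog of Proposition 4.1 over $F$ (the case $K=F$, used in the proof of Theorem 6.1) gives the exact sequence
$$
0\ra\sto(U)\ra T(F)\overset{\vartheta_{_{\be S}}}\longrightarrow\bigoplus_{v\notin S}\Phi_{v}(k(v))\ra C_{\e T,\e F,\e S}\ra 0,
$$
whence $\krn\vartheta_{_{\be S}}=\sto(U)$ and $\cok\vartheta_{_{\be S}}=C_{\e T,\e F,\e S}$. The exact sequence \eqref{7.5} identifies $\krn\eta_{_S}=\bigoplus_{v\e\in\e B\e\setminus S}H^{1}(F_{\lbe v}^{\e\rm{nr}},T)^{G_{k(v)}}$ and, being exact at its third term, exhibits $\cok\eta_{_S}$ as the image of its last arrow, hence as a subgroup of $\bigoplus_{v\e\in\e B\e\setminus\e S}H^{1}\big(k(v),H^{1}(F_{\lbe v}^{\le\rm{nr}},T)\big)$. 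Substituting these four values turns the kernel--cokernel sequence into exactly the asserted six-term sequence, the final arrow out of $C_{\e T,\e F,\e S}^{\e *}$ being the surjection $C_{\e T,\e F,\e S}^{\e *}\twoheadrightarrow\cok\eta_{_S}$ composed with the inclusion $\cok\eta_{_S}\hookrightarrow\bigoplus_{v\e\in\e B\e\setminus\e S}H^{1}\big(k(v),H^{1}(F_{\lbe v}^{\le\rm{nr}},T)\big)$.

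For the final assertion, if $S\supset B$ then $B\setminus S=\varnothing$, so both outer direct sums vanish and the sequence collapses to the isomorphisms $\sto(U)\overset{\sim}\ra\krn\vartheta_{_{\!\be S}}^{\e *}$ and $C_{\e T,\e F,\e S}\overset{\sim}\ra C_{\e T,\e F,\e S}^{\e *}$; equivalently, in this case each $v\notin S$ has multiplicative reduction, so $\Phi_{v}\big(\e\overline{k(v)}\e\big)$ is torsion-free and $\eta_{_S}$ is an isomorphism. I expect no essential obstacle, the argument being formal once the two displayed sequences are in hand; the only care needed is bookkeeping. One should check that the connecting map $\krn\eta_{_S}\ra\cok\vartheta_{_{\be S}}$ supplied by the general lemma is the expected map $\bigoplus_{v\e\in\e B\e\setminus S}H^{1}(F_{\lbe v}^{\e\rm{nr}},T)^{G_{k(v)}}\ra C_{\e T,\e F,\e S}$ induced by $\krn\eta_{_S}\hookrightarrow\bigoplus_{v\notin S}\Phi_{v}(k(v))\twoheadrightarrow C_{\e T,\e F,\e S}$, and observe that enlarging the target of the last arrow from $\cok\eta_{_S}$ to the full group $\bigoplus_{v\e\in\e B\e\setminus\e S}H^{1}\big(k(v),H^{1}(F_{\lbe v}^{\le\rm{nr}},T)\big)$ does not disturb exactness at $C_{\e T,\e F,\e S}^{\e *}$, since $\cok\eta_{_S}$ injects into it.
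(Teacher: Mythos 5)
Your proposal is correct and is essentially the paper's own proof: the paper derives the lemma from the kernel--cokernel exact sequence (citing \cite{Mi}, Proposition I.0.24) applied to the pair of maps in \eqref{theta*}, i.e.\ the factorization $\vartheta_{_{\!\be S}}^{\e *}=\eta_{_S}\circ\vartheta_{_{\be S}}$, using \eqref{7.5} exactly as you do. Your additional care about the connecting map and about replacing $\cok\eta_{_S}$ by the ambient group $\bigoplus_{v\e\in\e B\e\setminus\e S}H^{1}\big(k(v),H^{1}(F_{\lbe v}^{\le\rm{nr}},T)\big)$ (at the cost of surjectivity of the last arrow) is sound bookkeeping that the paper leaves implicit.
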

\begin{proof} This follows from the kernel-cokernel exact sequence
\cite{Mi}, Proposition I.0.24, p.19, associated to the pair of maps
\eqref{theta*}, using \eqref{7.5}.
\end{proof}

\begin{theorem} Let $0\ra T_{1}\ra Q\ra T\ra 0$ be a flasque
resolution of $T$. Assume that $S$ contains all primes of $F$ which
are wildly ramified in the minimal splitting field of $T_{1}$. Then
there exists a canonical exact sequence
$$\begin{array}{rcl}
0\ra\krn \vartheta_{\!S,\e 1}^{ *}\ra\sqo(U)\ra R\,\sto(U)&\ra&
C_{T_{1},\e F,\e S}^{*}\!\ra C_{Q,\e F,\e S}\ra C_{T,\e F,\e
S}^{R}\\\\
&\ra&\! \!\!\displaystyle\bigoplus_{v\in B_{1}\be\setminus S}
H^{1}(k(v),\Phi_{v}(T_{1})/{\rm{tors}})\ra 0,
\end{array}
$$
where $\vartheta_{\!S,\e 1}^{ *}$ and $C_{T_{1},\e F,\e S}^{*}$ are
given by \eqref{theta*} and  \eqref{seq star} for $T_{1}$,
respectively, $C_{T,\e F,\e S}^{R}$ is the $R$-equivalence class
group \eqref{R seq} and $B_{1}$ is the set of primes of $F$ where
$T_{1}$ has bad reduction.
\end{theorem}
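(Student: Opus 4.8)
The plan is to follow the proof of Theorem 6.1, feeding the snake lemma with a two-row commutative diagram whose top row is the sequence of $F$-rational points attached to the flasque resolution and whose bottom row records the component groups at the primes $v\notin S$. Two features distinguish the present (merely flasque) situation from the invertible case of Theorem 6.1: $R$-equivalence must be inserted into the top row, since $0\ra T_{1}(F)\ra Q(F)\ra T(F)\ra 0$ need no longer be exact, and the bottom row will be a four-term rather than a three-term exact sequence, since $\Phi_{\e v}(T_{1})$ now has torsion. For the top row I take the exact sequence \eqref{rat points}, namely $0\ra T_{1}(F)\ra Q(F)\ra R\,T(F)\ra 0$, whose exactness rests on the flasqueness of $T_{1}$ (\cite{CTS}, Theorem 2). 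The three vertical maps are $\vartheta_{\!S,\e 1}^{\,*}$ of \eqref{theta*} on $T_{1}(F)$, the map $\vartheta_{_{\!S}}$ on $Q(F)$ (the $F$-analogue of \eqref{theta}), and the map $\vartheta_{_{\!S}}^{\e R}$ of \eqref{theta R} on $R\,T(F)$. Their kernels are $\krn\vartheta_{\!S,\e 1}^{\,*}$, $\sqo(U)$ and $R\,\sto(U)$ (the middle one by the analogue of Proposition 4.1 over $F$, the last by \eqref{R seq}), and their cokernels are $C_{\e T_{1},\e F,\e S}^{\,*}$, $C_{\e Q,\e F,\e S}$ and $C_{\e T,\e F,\e S}^{\e R}$, by \eqref{seq star}, Proposition 4.1 and \eqref{R seq} respectively.

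The heart of the argument, and the step I expect to be the main obstacle, is the construction of the bottom row, which rests on the following local claim $(\star)$: for every $v\notin S$ there is a short exact sequence of $G_{k(v)}$-modules
\[
0\ra\Phi_{\e v}(T_{1})\big(\e\overline{k(v)}\e\big)\big/{\rm tors}\ra\Phi_{\e v}(Q)\big(\e\overline{k(v)}\e\big)\ra\Phi_{\e v}(T)\big(\e\overline{k(v)}\e\big)\ra 0.
\]
This is the flasque analogue of the exact sequence of component groups used in the proof of Theorem 6.1, where it followed from Lemma 4.8(b),(c) together with \cite{B}, Theorem 5.3.1. Here $R^{\e 1}\be j_{*}T_{1}$ no longer vanishes, so that argument is unavailable and the hypothesis on $S$ must be used instead. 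For $v\notin S$ the inertia group acts tamely on the cocharacter lattice $Y_{\lbe 1}=X_{1}^{\lbe\vee}$ of $T_{1}$, and in the tame case the component groups are computed by the cocharacter coinvariants, $\Phi_{\e v}(-)\big(\e\overline{k(v)}\e\big)\cong\big(Y_{(-)}\big)_{I_{\vbar}}$ (cf. \cite{X}). Dualizing the resolution gives an exact sequence of lattices $0\ra Y_{\lbe 1}\ra Y_{\lbe Q}\ra Y\ra 0$; since $Y_{\lbe Q}$ is a permutation module its $I_{\vbar}$-coinvariants are torsion-free (Lemma 4.8(c), as $Q$ is quasi-trivial), so passing to $I_{\vbar}$-coinvariants and discarding the torsion of $\big(Y_{\lbe 1}\big)_{I_{\vbar}}$ yields $(\star)$: the map $\big(Y_{\lbe 1}\big)_{I_{\vbar}}\big/{\rm tors}\ra\big(Y_{\lbe Q}\big)_{I_{\vbar}}$ is injective because it is injective after $\otimes\,\bq$, and its image is the kernel of $\big(Y_{\lbe Q}\big)_{I_{\vbar}}\ra Y_{I_{\vbar}}$ by right exactness of coinvariants.

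Granting $(\star)$, I take $G_{k(v)}$-cohomology. As $Q$ is quasi-trivial (hence invertible), $H^{1}\be\big(k(v),\Phi_{\e v}(Q)\big)=0$ by Lemma 4.8(d), so the cohomology sequence of $(\star)$ collapses to the four-term exact sequence
\[
0\ra\big(\Phi_{\e v}(T_{1})/{\rm tors}\big)^{G_{k(v)}}\ra\Phi_{\e v}(Q)(k(v))\ra\Phi_{\e v}(T)(k(v))\ra H^{1}\be\big(k(v),\Phi_{\e v}(T_{1})/{\rm tors}\big)\ra 0.
\]
Summing over $v\notin S$ produces the bottom row, with vertical maps as above; commutativity of the two squares follows from the functoriality of the valuation maps $\vartheta$ under $T_{1}\hookrightarrow Q\twoheadrightarrow T$, exactly as in the proof of Theorem 6.1. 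For $v\notin B_{1}$ the torus $T_{1}$ has good reduction, so $\Phi_{\e v}(T_{1})/{\rm tors}$ is the unramified $G_{k(v)}$-lattice $Y_{\lbe 1}$ and $H^{1}\be\big(k(v),Y_{\lbe 1}\big)\cong\widehat{H}^{\e 1}(C,Y_{\lbe 1})$ for a finite cyclic quotient $C$ of $G_{k(v)}$; this vanishes because $T_{1}$ is flasque, i.e. $Y_{\lbe 1}$ is coflasque. Hence the rightmost cokernel is concentrated on $B_{1}\setminus S$ and equals $\bigoplus_{v\in B_{1}\be\setminus S}H^{1}\be\big(k(v),\Phi_{\e v}(T_{1})/{\rm tors}\big)$, the last term in the statement.

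It remains to run the snake lemma. Denote the bottom row by $A_{1}\ra A_{2}\ra A_{3}$ with cokernel $D=\bigoplus_{v\in B_{1}\be\setminus S}H^{1}\be\big(k(v),\Phi_{\e v}(T_{1})/{\rm tors}\big)$, and replace it by the short exact sequence $0\ra A_{1}\ra A_{2}\ra A_{3}^{\e\prime}\ra 0$, where $A_{3}^{\e\prime}=\img(A_{2}\ra A_{3})=\krn(A_{3}\ra D)$. Because $Q(F)\ra R\,T(F)$ is surjective, commutativity of the right square forces the image of $\vartheta_{_{\!S}}^{\e R}$ to lie in $A_{3}^{\e\prime}$; thus the right vertical map of the truncated diagram still has kernel $R\,\sto(U)$, while its cokernel $C^{\e\prime}$ fits into $0\ra C^{\e\prime}\ra C_{\e T,\e F,\e S}^{\e R}\ra D\ra 0$. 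Applying the snake lemma to the two short exact rows gives
\[
0\ra\krn\vartheta_{\!S,\e 1}^{\,*}\ra\sqo(U)\ra R\,\sto(U)\ra C_{\e T_{1},\e F,\e S}^{\,*}\ra C_{\e Q,\e F,\e S}\ra C^{\e\prime}\ra 0,
\]
and splicing this with the three-term sequence for $C^{\e\prime}$ produces the asserted eight-term exact sequence. The only genuinely new input is the local sequence $(\star)$; the rest is a diagram chase parallel to Section 6.
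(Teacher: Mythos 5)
Your proposal reproduces the paper's proof in its global architecture: the same two-row diagram with top row \eqref{rat points} and vertical maps $\vartheta_{S,1}^{\,*}$, $\vartheta_{S}$, $\vartheta_{S}^{R}$; the same cohomological collapse using $H^{1}(k(v),\Phi_{v}(Q))=0$ for the quasi-trivial torus $Q$; the same vanishing of $H^{1}(k(v),\Phi_{v}(T_{1})/{\rm tors})$ for $v\notin B_{1}$ via flasqueness (your reduction to $\widehat{H}^{1}(C,Y_{1})=0$ for the cyclic decomposition subgroup is equivalent in substance to the paper's inflation into $H^{1}(G_{v},X_{1}^{\vee})$ and \cite{CTS}, Lemma 1, p.179); and the same snake-lemma truncation and splicing, which you carry out correctly. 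The one step where you depart from the paper is the proof of the local claim $(\star)$, and there your argument has a genuine gap. You establish tameness of the inertia action only on $Y_{1}$ --- which is all the hypothesis on $S$ provides --- but then invoke the tame identification $\Phi_{v}(-)\big(\e\overline{k(v)}\e\big)\cong (Y_{(-)})_{I_{\e\overline{v}}}$ for all three tori. Nothing in the hypothesis prevents $T$ (or $Q$) from being wildly ramified at some $v\notin S$: for instance, if $T=R_{L/F}(\bg_{m,L})/\e\bg_{m,F}$ with $L/F$ wildly ramified, the canonical flasque resolution is $0\ra\bg_{m,F}\ra R_{L/F}(\bg_{m,L})\ra T\ra 0$, so $T_{1}=\bg_{m,F}$ is split, its minimal splitting field is $F$, the hypothesis on $S$ is vacuous, and yet $T$ is wild at every wildly ramified prime of $L/F$ outside $S$. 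For quasi-trivial $Q$ the coinvariants description can be verified directly (this is in effect the computation in the proof of Lemma 4.8, via \cite{B}, p.34), but for a wildly ramified $T$ the isomorphism $\Phi_{v}(T)\big(\e\overline{k(v)}\e\big)\cong Y_{I_{\e\overline{v}}}$ is precisely a tame-case theorem of \cite{X} and is not available in general; in particular the surjectivity of $\Phi_{v}(Q)\big(\e\overline{k(v)}\e\big)\ra\Phi_{v}(T)\big(\e\overline{k(v)}\e\big)$ --- the crux of $(\star)$ --- is left unproved by your lattice computation.

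The paper fills exactly this hole with an input whose hypotheses concern only $T_{1}$: by \cite{B}, Corollary 4.2.6, p.82, the assumption on $S$ gives $R^{1}j_{*}T_{1}=0$ for the smooth topology on $U$, and then \cite{B}, Theorem 5.3.1, p.99, applied to $0\ra T_{1}\ra Q\ra T\ra 0$, together with Lemma 4.8(c) for $Q$, yields the four-term local sequence (equivalently your $(\star)$ followed by the cohomology step) with no ramification condition whatsoever on $Q$ or $T$; the point is that vanishing of $R^{1}j_{*}T_{1}$ forces $\sqo\ra\sto$ to be surjective as smooth sheaves, which is what your coinvariants argument was trying to substitute for. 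Replacing the second paragraph of your proposal by this reference (or by any direct proof of that sheaf surjectivity over $\s O_{v}^{\e h}$) repairs the argument; everything else in your write-up agrees with the paper's proof.
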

\begin{proof} By the hypothesis and \cite{B}, Corollary 4.2.6, p.82,
$R^{1}j_{*}T_{1}=0$ for the smooth topology on $U$. Thus, by
\cite{B}, Theorem 5.3.1, p.99, and Lemma 4.8(c) applied to $Q$, for
each $v\notin S$ there exists an exact sequence
$$
\begin{array}{rcl}
0\ra
(\Phi_{v}(T_{1})\big(\e\overline{k(v)}\e\big)/\text{tors})^{G_{k(v)}}
&\ra&\Phi_{v}(Q)(k(v))\ra
\Phi_{v}(T)(k(v))\\\\
&\ra& H^{1}(k(v),\Phi_{v}(T_{1})/\text{tors})\ra 0.
\end{array}
$$
If $v\notin B_{1}$, then $\Phi_{v}(T_{1})/\text{tors}=X_{1}^{\vee}$
as $G_{k(v)}$-modules, where $X_{1}$ denotes the group of characters
of $T_{1}$ \cite{X}, Corollary 2.18. On the other hand, there exists
an injection $\text{Inf}\colon
H^{1}(k(v),X_{1}^{\vee})\hookrightarrow H^{1}(G_{v},X_{1}^{\vee})$,
and the latter group vanishes because $X_{1}$ is flasque \cite{CTS},
Lemma 1, p.179. We conclude that
$H^{1}(k(v),\Phi_{v}(T_{1})/\text{tors})=0$ for every $v\notin
B_{1}$. Thus there exists an exact sequence
\begin{equation}\label{7.8}
\begin{array}{rcl}
\displaystyle\bigoplus_{v\notin
S}(\Phi_{v}(T_{1})\big(\e\overline{k(v)}\e\big)/\text{tors})^{G_{k(v)}}
&\hookrightarrow&\displaystyle\bigoplus_{v\notin
S}\Phi_{v}(Q)(k(v))\ra \displaystyle\bigoplus_{v\notin
S}\Phi_{v}(T)(k(v))\\\\
&\ra&\displaystyle\bigoplus_{v\e\in\e B_{1}\setminus S}
H^{1}(k(v),\Phi_{v}(T_{1})/\text{tors})\ra 0.
\end{array}
\end{equation}
Consider now the exact commutative diagram
$$
\xymatrix{T_{1}(F)\ar[d]^(.4){\vartheta_{\!S,\e 1}^{
*}}\ar@{^{(}->}[r]
& Q(F)\ar@{->>}[r]\ar[d] & R\,T(F)\ar[d]^(.4){\vartheta_{_{\!\be S}}^{\lbe R}}\\
\displaystyle\bigoplus_{v\notin
S}(\Phi_{v}(T_{1})\big(\e\overline{k(v)}\e\big)/\text{tors})^{G_{k(v)}}
\ar@{^{(}->}[r]&\displaystyle\bigoplus_{v\notin
S}\Phi_{v}(Q)(k(v))\ar[r]&\displaystyle\bigoplus_{v\notin
S}\Phi_{v}(T)(k(v))}
$$
whose top row is \eqref{rat points}, its bottom row consists of the
first three terms of \eqref{7.8}, the left-hand vertical map is
\eqref{theta*} for the torus $T_{1}$ and the right-hand vertical map
is \eqref{theta R}. Applying the snake lemma to the above diagram
and using \eqref{7.8}, we immediately obtain the exact sequence of
the theorem.
\end{proof}

The following corollary of the theorem is a variant of Theorem 6.1.

\begin{corollary} Let the notations be as in the theorem. Assume that
$S\supset B_{1}$. Then there exists a canonical exact sequence
$$
0\ra\sto_{1}(U)\ra\sqo(U)\ra R\,\sto(U)\ra
C_{T_{1},\e F,\e S}\ra C_{Q,\e F,\e S}\ra C_{T,\e F,\e
S}^{R}\ra 0.
$$
\end{corollary}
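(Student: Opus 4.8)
The plan is to obtain this corollary as an immediate specialization of Theorem 7.4 once the hypothesis is strengthened from ``$S$ contains the wildly ramified primes of the splitting field of $T_{1}$'' to ``$S\supset B_{1}$''. The first step I would carry out is to confirm that the latter hypothesis implies the former, so that Theorem 7.4 is applicable. A torus has good reduction at a non-archimedean prime exactly when inertia acts trivially on its character module; since the Galois action on $X_{1}$ is faithful through $\mathrm{Gal}(L_{1}/F)$, where $L_{1}$ is the minimal splitting field of $T_{1}$, every prime ramifying in $L_{1}$ forces inertia to act nontrivially on $X_{1}$ and hence lies in $B_{1}$. In particular all wildly ramified primes lie in $B_{1}$, so $S\supset B_{1}$ guarantees the hypothesis of Theorem 7.4 and its seven-term exact sequence is at our disposal.

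The second step is to simplify the two outer terms of that sequence using $S\supset B_{1}$. Applying the final assertion of Lemma 7.3 to the torus $T_{1}$ (taking $B$ there to be $B_{1}$) yields the canonical identifications $\krn\vartheta_{\!S,\e 1}^{ *}=\sto_{1}(U)$ and $C_{T_{1},\e F,\e S}^{*}=C_{T_{1},\e F,\e S}$. At the same time, because $B_{1}\setminus S=\emptyset$, the rightmost term $\bigoplus_{v\in B_{1}\setminus S}H^{1}(k(v),\Phi_{v}(T_{1})/\text{tors})$ appearing in Theorem 7.4 vanishes.

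Substituting these three identifications into the exact sequence of Theorem 7.4 collapses it precisely to
$$
0\ra\sto_{1}(U)\ra\sqo(U)\ra R\,\sto(U)\ra C_{T_{1},\e F,\e S}\ra C_{Q,\e F,\e S}\ra C_{T,\e F,\e S}^{R}\ra 0,
$$
with canonicity inherited from the sequence in Theorem 7.4. I do not expect any genuine obstacle here, since all the substantive work resides in Theorem 7.4 and Lemma 7.3; the only point requiring attention is the verification in the first step that $S\supset B_{1}$ subsumes the wild-ramification hypothesis, legitimizing the appeal to Theorem 7.4.
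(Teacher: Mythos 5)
Your proposal is correct and follows exactly the paper's route: the author also deduces the corollary from Theorem 7.4 combined with the final assertion of Lemma 7.3 applied to $T_{1}$, with the term $\bigoplus_{v\in B_{1}\setminus S}H^{1}(k(v),\Phi_{v}(T_{1})/{\rm{tors}})$ vanishing since $B_{1}\setminus S=\emptyset$. Your preliminary verification that $S\supset B_{1}$ subsumes the wild-ramification hypothesis (via faithfulness of the $\mathrm{Gal}(L_{1}/F)$-action on $X_{1}$) is left implicit in the paper but is the right justification for invoking Theorem 7.4.
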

\begin{proof} This follows from Theorem 7.4 using Lemma 7.3 for the torus $T_{1}$.
\end{proof}

\begin{remark} We have not attempted to extend Theorem 7.4 by
removing the hypothesis on $S$ there because we believe that the
result would be too complicated to be of much use (compare Theorem
5.3.1 on p.99 of \cite{B} with the significantly more complicated
Theorem 5.3.4 on p.101 of this reference). Perhaps a more sensible
problem to address for an arbitrary torus $T$ equipped with a
flasque resolution \eqref{f1} is the computation of the
corresponding Ono invariant $E_{\e f,\e S}(T)$. As is often the case
when computing numerical invariants, it seems reasonable to expect
that some of the complications will ``cancel out" in the course of
the computation. But we do not pursue this matter here.
\end{remark}

\section{Norm tori}

In this Section $T$ is any $F$-torus and $K/F$ is any finite Galois extension
such that $T_{K}$ is quasi-trivial. We keep the notations introduced in Section 2.

Define an $F$-torus $T^{\e\prime}$ by the exactness of the
sequence
\begin{equation}\label{normtorus}
0\ra T^{\e\prime}\ra R_{K/F}(T_{K})\overset{N}\longrightarrow T\ra
0,
\end{equation}
where $N$ is induced by the norm map $N_{K/F}\colon K^{*}\ra F^{*}$.
The torus $T^{\e\prime}$ is called the {\it norm} (or norm one)
torus determined by $T$ and $K/F$ and is often denoted by
$R^{(1)}_{K/F}(T_{K})$. See, e.g., \cite{B}, Theorem 0.4.4, p.16.
Clearly, \eqref{normtorus} induces an exact sequence
\begin{equation}\label{rat norm}
0\ra T^{\e\prime}(F)\ra T(K)\ra N_{K/F}\e T(K)\ra 0.
\end{equation}
Now, for each $v\notin S$, $N$ induces a map
$N_{v}\colon\prod_{w\mid v}\Phi_{w}(k(w))\ra\Phi_{v}(k(v))$ (see the
discussion after Theorem 6.1) and the following diagram commutes
$$
\xymatrix{T(K)\ar[d]^{N_{\lbe K/\lbe F}}\ar[r]^(.33){\vartheta_{_{\! S_{\be K}}}}&\displaystyle\bigoplus_{v\notin
S}\prod_{w\mid v}\Phi_{w}(k(w))\ar[d]^{\oplus\e N_{v}}\\
T(F)\ar[r]^(.36){\vartheta_{_{\be S}}}&\displaystyle\bigoplus_{v\notin
S}\Phi_{v}(k(v)).}
$$
Define
\begin{equation}\label{cl norm}
C_{\e T,\e F,\e S}^{N}=\cok\!\!\left[N_{K/F}\le T(K)\ra\displaystyle\bigoplus_{v\notin
S}\img N_{v}\right],
\end{equation}
where the map involved is induced by $\vartheta_{_{\be S}}$, and set
\begin{equation}\label{w}
W_{\e T,\e F,\e S}=\sto(U)\cap N_{K/F}\e T(K).
\end{equation}
By Proposition 4.1 for $T$, there exists a canonical exact sequence
\begin{equation}\label{n seq}
1\ra W_{\e T,\e F,\e S}\ra N_{K/F}\le T(K)\ra \displaystyle\bigoplus_{v\notin
S}\img N_{v}\ra C_{\e T,\e F,\e S}^{N}\ra 0.
\end{equation}
Further, there exist canonical maps
\begin{equation}\label{nu}
\nu\colon C_{\e T,\e K,\e S_{K}}\twoheadrightarrow C_{\e T,\e F,\e S}^{N}
\end{equation}
and
\begin{equation}\label{iota}
\iota\colon C_{\e T,\e F,\e S}^{N}\ra C_{\e T,\e F,\e S}
\end{equation}
which are induced by $\bigoplus_{\e v\notin S}\bigoplus_{\e w\mid v}\!\Phi_{w}(k(w))\overset{\oplus\e N_{v}}\longrightarrow
\bigoplus_{\e v\notin S}\img N_{v}$ and the inclusion $\bigoplus_{\e v\notin S}\img N_{v}\hookrightarrow\e\bigoplus_{\e v\notin S}\Phi_{v}(k(v))$, respectively.
Then
$$
\iota\circ\nu=N_{\e T,\e K/F,\e S}\colon C_{\e T,\e K,\e S_{K}}\ra C_{\e T,\e F,\e S}
$$
is norm map defined in Section 2 (this is a straightforward
verification). Since $\nu$ is surjective, we have
\begin{equation}\label{N}
\img \iota=N_{\e T,\e K/F,\e S}\, C_{\e T,\e K,\e S_{K}}.
\end{equation}

Now define
\begin{equation}\label{sha n}
\Sha_{N,\e S}(T\e)=\krn\!\!\left[\e\widehat{H}^{\e 0}(G,T(K))\ra\displaystyle\bigoplus_{v\notin
S}\widehat{H}^{\e 0}(G_{w_{v}},\Phi_{w_{v}}\be(k(w_{v})))\right],
\end{equation}
where the map involved is induced by $\vartheta_{_{\be S}}$.

\begin{remark} Using the fact that $C_{T_{K}\lbe ,\e w_{v}}=\widehat{H}^{-1}(G_{w_{v}},\Phi_{w_{v}}\be(k(w_{v})))=0$ (see the proof of Lemma 4.8), it is not difficult
to see that the above group is related to the more familiar one $\widehat{\!\be\Sha}^{0}_{S}(G,T(K))$ by an exact sequence
$$
0\ra\widehat{\!\be\Sha}^{0}_{S}(G,T(K))\ra \Sha_{N,\e S}(T\e)\ra\bigoplus_{v\notin S}\widehat{H}^{\e 0}(G_{w_{v}},\tto\lbe(\s O_{w_{v}}\be)).
$$
\end{remark}

\begin{lemma} There exists a canonical exact sequence
$$
1\ra\sto(U)/W_{\e T,\e F,\e S}\ra\Sha_{N,\e S}(T\e)\ra C_{\e T,\e F,\e S}^{N}\overset{\iota}\longrightarrow N_{\e T,\e K/F,\e S}\, C_{\e T,\e K,\e S_{K}}\ra 0,
$$
where $W_{\e T,\e F,\e S}$ is the group \eqref{w}, $C_{\e T,\e F,\e S}^{N}$ is defined by \eqref{cl norm} and $\iota$ is the map \eqref{iota}.
\end{lemma}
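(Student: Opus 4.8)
The plan is to recognize the exact sequence \eqref{n seq} as the restriction of the class group sequence of Proposition 4.1, applied to $T$ over $F$, namely
$$
1\ra\sto(U)\ra T(F)\overset{\vartheta_{_{\be S}}}\longrightarrow\bigoplus_{v\notin S}\Phi_{v}(k(v))\ra C_{\e T,\e F,\e S}\ra 0,
$$
to the relevant ``norm'' subgroups. Indeed $N_{K/F}\e T(K)\subseteq T(F)$, $\bigoplus_{v\notin S}\img N_{v}\subseteq\bigoplus_{v\notin S}\Phi_{v}(k(v))$ and $W_{\e T,\e F,\e S}=\sto(U)\cap N_{K/F}\e T(K)$, while the commutative square preceding \eqref{cl norm} shows that $\vartheta_{_{\be S}}$ carries $N_{K/F}\e T(K)$ into $\bigoplus_{v\notin S}\img N_{v}$. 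Hence the four terms of \eqref{n seq} map into the four terms of the displayed sequence, the first three arrows being the evident inclusions and the fourth being the map $\iota$ of \eqref{iota}; this is a morphism of four-term exact sequences.

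Next I would split each of these two four-term exact sequences at the image of its middle arrow and apply the snake lemma to the two resulting morphisms of short exact sequences. Since the first three vertical arrows are injective, every kernel appearing is trivial, and the two applications combine into a single canonical exact sequence
$$
0\ra\sto(U)/W_{\e T,\e F,\e S}\ra\krn\Psi\ra\krn\iota\ra 0,
$$
where $\Psi\colon\widehat{H}^{\e 0}(G,T(K))\ra\bigoplus_{v\notin S}\Phi_{v}(k(v))\big/\img N_{v}$ is induced by $\vartheta_{_{\be S}}$ and $\widehat{H}^{\e 0}(G,T(K))=T(F)/N_{K/F}\e T(K)$. Splicing this with the tautological sequence $0\ra\krn\iota\ra C_{\e T,\e F,\e S}^{N}\overset{\iota}\longrightarrow\img\iota\ra 0$ and using \eqref{N}, which identifies $\img\iota$ with $N_{\e T,\e K/F,\e S}\,C_{\e T,\e K,\e S_{K}}$, yields a four-term exact sequence of the desired shape, with $\krn\Psi$ in the place of $\Sha_{N,\e S}(T)$. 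It therefore remains to produce a canonical isomorphism $\krn\Psi=\Sha_{N,\e S}(T)$.

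This last identification is the heart of the argument and the step I expect to be the main obstacle. By Shapiro's lemma the map defining $\Sha_{N,\e S}(T)$ in \eqref{sha n} sends the class of $x\in T(F)$ to the family $\big([\e\vartheta_{\lbe w_{v}}(x)]\big)_{v\notin S}$ in $\bigoplus_{v\notin S}\widehat{H}^{\e 0}(G_{w_{v}},\Phi_{w_{v}}(k(w_{v})))$, and by diagram \eqref{d1} one has $\vartheta_{\lbe w_{v}}(x)=\delta_{v}(\vartheta_{\lbe v}(x))$ for $x\in T(F_{v})$. Thus both $\krn\Psi$ and $\Sha_{N,\e S}(T)$ are cut out, locally at each $v\notin S$, by conditions on $\vartheta_{\lbe v}(x)$: the former by $\vartheta_{\lbe v}(x)\in\img N_{v}$, the latter by $\delta_{v}(\vartheta_{\lbe v}(x))\in N_{G_{w_{v}}}\Phi_{w_{v}}(k(w_{v}))$. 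The functoriality of $\vartheta$ under the norm gives $\img N_{v}=\vartheta_{\lbe v}\big(N_{K_{w_{v}}/F_{v}}T(K_{w_{v}})\big)$ together with the relation $\delta_{v}\circ N_{v}=N_{G_{w_{v}}}$ (on the summand indexed by $w_{v}$), so the first condition implies the second; the delicate point is the converse, i.e. that the torsion discrepancy measured by $\krn\delta_{v}$ does not enlarge the kernel. Here I would use that $T_{K}$ is quasi-trivial: by Remark 8.1 (via Lemma 4.8) one has $C_{T_{K},\,w_{v}}=\widehat{H}^{-1}(G_{w_{v}},\Phi_{w_{v}}(k(w_{v})))=0$, so that $\vartheta_{\lbe w_{v}}$ is surjective and the Tate cohomology sequence of $0\ra\tto(\s O_{w_{v}})\ra T(K_{w_{v}})\overset{\vartheta_{\lbe w_{v}}}\longrightarrow\Phi_{w_{v}}(k(w_{v}))\ra 0$ shows that the local comparison map $\Phi_{v}(k(v))/\img N_{v}\ra\widehat{H}^{\e 0}(G_{w_{v}},\Phi_{w_{v}}(k(w_{v})))$ induced by $\delta_{v}$ is injective. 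Injectivity of this map for every $v\notin S$ would force $\krn\Psi$ to coincide with the kernel of the map of \eqref{sha n}, i.e. with $\Sha_{N,\e S}(T)$, completing the argument.
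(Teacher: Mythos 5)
Your formal skeleton coincides with the paper's proof: the paper obtains the lemma from a single application of the snake lemma to the diagram whose exact rows are $N_{K/F}\e T(K)\hookrightarrow T(F)\twoheadrightarrow\widehat{H}^{\e 0}(G,T(K))$ and $\bigoplus_{v\notin S}\img N_{v}\hookrightarrow\bigoplus_{v\notin S}\Phi_{v}(k(v))\ra\bigoplus_{v\notin S}\widehat{H}^{\e 0}(G_{w_{v}},\Phi_{w_{v}}\be(k(w_{v})))$, together with \eqref{n seq}, \eqref{N} and Proposition 4.1; your two snake applications plus splicing are an equivalent reorganization of the same computation, and that formal part of your proposal is correct. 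The one substantive point is exactly the one you isolate: the exactness of the bottom row at $\bigoplus_{v\notin S}\Phi_{v}(k(v))$, i.e. the injectivity of the local comparison maps $\Phi_{v}(k(v))/\img N_{v}\ra\widehat{H}^{\e 0}(G_{w_{v}},\Phi_{w_{v}}\be(k(w_{v})))$. The paper asserts this silently through the decorations on its diagram; you correctly identify it as the heart, and your reduction of it (via surjectivity of $\vartheta_{w_{v}}$ and the relation $\delta_{v}(\img N_{v})=N_{G_{w_{v}}}\Phi_{w_{v}}\be(k(w_{v}))$) to the containment $\krn\delta_{v}\subseteq\img N_{v}$ is sound.

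Your proposed proof of that containment, however, does not go through. The Tate cohomology sequence of $0\ra\tto\lbe(\lbe\s O_{w_{v}}\lbe)\ra T(K_{w_{v}})\ra\Phi_{w_{v}}\be(k(w_{v}))\ra 0$, even granted the vanishing of $\widehat{H}^{-1}(G_{w_{v}},\Phi_{w_{v}}\be(k(w_{v})))$, only controls the map $\widehat{H}^{\e 0}(G_{w_{v}},T(K_{w_{v}}))=T(F_{v})/N_{K_{w_{v}}/F_{v}}T(K_{w_{v}})\ra\widehat{H}^{\e 0}(G_{w_{v}},\Phi_{w_{v}}\be(k(w_{v})))$, whose kernel it identifies with the image of $\widehat{H}^{\e 0}(G_{w_{v}},\tto\lbe(\lbe\s O_{w_{v}}\lbe))$. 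This is a statement about classes of elements of $T(F_{v})$, not of $\Phi_{v}(k(v))$. Transferring it to your comparison map would require $\vartheta_{v}\colon T(F_{v})\ra\Phi_{v}(k(v))$ to be surjective, which fails at primes $v\in B\setminus S$ (and Lemma 8.2 carries no hypothesis $S\supseteq B$); and even where $\vartheta_{v}$ is surjective, the transfer only shows that an element $c$ killed by the comparison map can be written $c=\vartheta_{v}(u)+c^{\e\prime}$ with $c^{\e\prime}\in\img N_{v}$ and $u\in\tto\lbe(\lbe\s O_{w_{v}}\lbe)^{G_{w_{v}}}$; the term $\vartheta_{v}(u)$ lies in $\krn\delta_{v}$, and nothing you cite places it in $\img N_{v}$ — which is precisely the containment to be proved. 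The argument is thus circular at the decisive step. A further caution: the vanishing $\widehat{H}^{-1}(G_{w_{v}},\Phi_{w_{v}}\be(k(w_{v})))=0$ that you import from Remark 8.1 concerns the semilinear descent action of $G_{w_{v}}$ and does not follow from quasi-triviality of $T_{K}$ alone: for $T=R^{(1)}_{K/F}(\bg_{m,K})$ with $K/F$ quadratic and $w_{v}$ ramified, one computes $\Phi_{w_{v}}\be(k(w_{v}))\cong\bz$ with the generator of $G_{w_{v}}$ acting by $-1$, so that $\widehat{H}^{-1}(G_{w_{v}},\Phi_{w_{v}}\be(k(w_{v})))\cong\bz/2\neq 0$. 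So the input of your final step is unreliable and the deduction from it invalid; an honest completion requires a direct proof that $\krn\delta_{v}\subseteq\img N_{v}$ for all $v\notin S$, a point which the paper's own one-line proof also passes over in silence.
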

\begin{proof} This follows by applying the snake lemma to the diagram
$$
\xymatrix{N_{K/F}\e T(K)\ar[d]\ar@{^{(}->}[r]
& T(F)\ar@{->>}[r]\ar[d]^(.45){\vartheta_{_{\be S}}} & \widehat{H}^{\e 0}(G,T(K))\ar[d]\\
\displaystyle\bigoplus_{v\notin
S}\img N_{v}\ar@{^{(}->}[r]&\displaystyle\bigoplus_{v\notin
S}\Phi_{v}(k(v))\ar@{->>}[r]&\displaystyle\bigoplus_{v\notin
S}\widehat{H}^{\e 0}(G_{w_{v}},\Phi_{w_{v}}\be(k(w_{v})))}
$$
and using \eqref{n seq}, \eqref{N} and Proposition 4.1 for $T$.
\end{proof}

\bigskip

\begin{lemma} Assume that $S$ contains all primes of $F$ which are wildly ramified in the minimal splitting field of $T^{\e\prime}$.
Then there exists a canonical isomorphism
$$
\displaystyle\bigoplus_{v\notin S}\big(\Phi_{v}(T^{\e\prime}\e
)\big(\e\overline{k(v)}\e\big)/{\rm{tors}}\big)^{G_{k(v)}}=
\krn\!\!\left[\,\bigoplus_{\e v\notin S}\prod_{\e w\mid v}\Phi_{w}(k(w))\overset{\oplus\le N_{v}}\longrightarrow
\bigoplus_{\e v\notin S}\Phi_{v}(k(v))\right].
$$
\end{lemma}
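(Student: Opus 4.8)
The strategy is to prove the isomorphism one prime at a time and then sum over $v\notin S$. The essential input is the norm torus sequence \eqref{normtorus}, namely $0\ra T^{\e\prime}\ra R_{K/F}(T_{K})\overset{N}\longrightarrow T\ra 0$, to which I would apply the N\'eron component-group functor exactly as in the proof of Theorem 7.4.

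First I would record that the middle torus is quasi-trivial. Since $T_{K}$ is quasi-trivial over $K$ we may write $T_{K}\simeq\prod_{i}R_{L_{i}/K}(\bg_{m,\e L_{i}})$, and transitivity of Weil restriction gives $R_{K/F}(T_{K})\simeq\prod_{i}R_{L_{i}/F}(\bg_{m,\e L_{i}})$, which is quasi-trivial, hence invertible, over $F$. Consequently Lemma 4.8(c) applies to $R_{K/F}(T_{K})$, so $\Phi_{v}(R_{K/F}(T_{K}))\big(\e\overline{k(v)}\e\big)$ is torsion-free for every $v\notin S$. Moreover, by the discussion following Theorem 6.1, there are canonical identifications $\Phi_{v}(R_{K/F}(T_{K}))(k(v))=\prod_{w\mid v}\Phi_{w}(k(w))$ and $\Phi_{v}(T)(k(v))=\Phi_{v}(k(v))$, under which the map induced by $N$ on component groups coincides with the map $N_{v}\colon\prod_{w\mid v}\Phi_{w}(k(w))\ra\Phi_{v}(k(v))$ occurring in the statement.

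Next, the hypothesis that $S$ contains every prime of $F$ wildly ramified in the minimal splitting field of $T^{\e\prime}$ allows me to invoke \cite{B}, Corollary 4.2.6, p.82, and conclude that $R^{1}j_{*}T^{\e\prime}=0$ for the smooth topology on $U$; note that $T^{\e\prime}$ is in general not flasque (the norm-one torus $R^{(1)}_{K/F}(\bg_{m,K})$, for instance, is not), so it is this tameness hypothesis, and not flasqueness, that produces the vanishing. With this vanishing together with the torsion-freeness noted above, \cite{B}, Theorem 5.3.1, p.99, applied to \eqref{normtorus} yields, for each $v\notin S$, an exact sequence
$$
0\ra\big(\Phi_{v}(T^{\e\prime})\big(\e\overline{k(v)}\e\big)/\text{tors}\big)^{G_{k(v)}}\ra\Phi_{v}(R_{K/F}(T_{K}))(k(v))\overset{N_{v}}\longrightarrow\Phi_{v}(k(v))\ra H^{1}\big(k(v),\Phi_{v}(T^{\e\prime})/\text{tors}\big)\ra 0.
$$
Reading off its left-exact portion identifies $\big(\Phi_{v}(T^{\e\prime})\big(\e\overline{k(v)}\e\big)/\text{tors}\big)^{G_{k(v)}}$ canonically with $\krn N_{v}$. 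Taking the direct sum over $v\notin S$ and using exactness of direct sums then gives the asserted isomorphism.

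The main obstacle is checking that the hypotheses of \cite{B}, Theorem 5.3.1 genuinely hold, above all the vanishing $R^{1}j_{*}T^{\e\prime}=0$, which is precisely where the wild-ramification assumption on $S$ enters. The remaining verifications are routine but must be carried out compatibly: one must confirm that the Weil-restriction identifications of Section 6 carry the connecting map of the component-group sequence to the map $N_{v}$ of the statement, so that the final direct sum over $v\notin S$ matches the displayed kernel.
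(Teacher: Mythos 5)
Your proposal is correct and follows essentially the same route as the paper: the wild-ramification hypothesis gives $R^{1}j_{*}T^{\e\prime}=0$ via \cite{B}, Corollary 4.2.6, and the isomorphism is then read off from \cite{B}, Theorem 5.3.1, applied to \eqref{normtorus}, with torsion-freeness of the middle component groups supplied by Lemma 4.8(c). The only (immaterial) variation is that you apply Lemma 4.8(c) directly to the quasi-trivial $F$-torus $R_{K/F}(T_{K})$, whereas the paper applies it to $T_{K}$ over $K$ and uses the induced-module decomposition of $\Phi_{v}(R_{K/F}(T_{K}))$ from Section 6.
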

\begin{proof} The hypothesis implies that $R^{1}j_{*}T^{\e\prime}=0$ for the smooth topology on $U$ \cite{B}, Corollary 4.2.6, p.82.
The lemma now follows by applying \cite{B}, Theorem 5.3.1, p.99, to
the exact sequence \eqref{normtorus} and using Lemma 4.8(c) for
$T_{K}$.
\end{proof}

\begin{theorem} Assume that $S$ contains all primes of $F$ which are wildly ramified in the minimal splitting field of $T^{\e\prime}$. Then there exists an exact sequence
$$
0\ra W_{\e T,\e F,\e S}/N_{K/F}\le\tto\be\big(\e\ut\e\big)\ra C_{\e T^{\e\prime}\be,\e F,\e S}^{\e *}\ra C_{\e T,\e K,\e S_{K}}\overset{\nu}\longrightarrow C_{\e T,\e F,\e S}^{N}\ra 0,
$$
where $W_{\e T,\e F,\e S}$ is the group \eqref{w}, $C_{\e
T^{\e\prime}\be,\e F,\e S}^{\e *}$ is given by \eqref{theta*} for
$T^{\e\prime}$ and $\nu$ is the map \eqref{nu}.
\end{theorem}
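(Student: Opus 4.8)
The plan is to obtain the asserted sequence as the tail of the six-term exact sequence produced by the snake lemma, applied to a diagram with exact rows whose columns are $\vartheta$-maps. For the top row I take \eqref{rat norm},
$$
0\ra T^{\e\prime}(F)\ra T(K)\overset{N_{K/F}}\longrightarrow N_{K/F}\e T(K)\ra 0,
$$
and for the bottom row the tautological exact sequence
$$
0\ra\krn(\oplus N_{v})\ra\bigoplus_{v\notin S}\prod_{w\mid v}\Phi_{w}(k(w))\overset{\oplus N_{v}}\longrightarrow\bigoplus_{v\notin S}\img N_{v}\ra 0,
$$
where surjectivity onto $\bigoplus_{v\notin S}\img N_{v}$ holds by the definition of $\img N_{v}$. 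The middle vertical map $\beta$ is the map $\vartheta_{_{\be S}}$ of \eqref{theta}, which is the map appearing in Proposition 4.1 for $T_{K}$ over $\ut$. The left vertical map $\alpha$ is the map \eqref{theta*} attached to $T^{\e\prime}$, whose target $\bigoplus_{v\notin S}\big(\Phi_{v}(T^{\e\prime})\big(\e\overline{k(v)}\e\big)/\text{tors}\big)^{G_{k(v)}}$ is identified with $\krn(\oplus N_{v})$ by the preceding lemma. The right vertical map $\gamma$ sends $N_{K/F}(x)$ to $\oplus N_{v}(\vartheta_{_{\be S}}(x))$; this is exactly the map of \eqref{cl norm} defining $C_{\e T,\e F,\e S}^{N}$, and it is well defined because $\krn N_{K/F}=T^{\e\prime}(F)$ maps into $\krn(\oplus N_{v})$ under $\vartheta_{_{\be S}}$.

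Next I would read off the six terms. By Proposition 4.1 for $T_{K}$, $\krn\beta=\tto(\ut)$ and $\cok\beta=C_{\e T,\e K,\e S_{K}}$. By \eqref{w} and \eqref{cl norm}, $\krn\gamma=\sto(U)\cap N_{K/F}\e T(K)=W_{\e T,\e F,\e S}$ and $\cok\gamma=C_{\e T,\e F,\e S}^{N}$. By \eqref{seq star} applied to $T^{\e\prime}$, $\cok\alpha=C_{\e T^{\e\prime}\be,\e F,\e S}^{\e *}$. Moreover the induced map $\krn\beta\ra\krn\gamma$ is $x\mapsto N_{K/F}(x)$, with image $N_{K/F}\e\tto(\ut)$, while the induced map $\cok\beta\ra\cok\gamma$ is the map $\nu$ of \eqref{nu}. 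Hence the snake lemma yields
$$
0\ra\krn\alpha\ra\tto(\ut)\ra W_{\e T,\e F,\e S}\ra C_{\e T^{\e\prime}\be,\e F,\e S}^{\e *}\ra C_{\e T,\e K,\e S_{K}}\overset{\nu}\longrightarrow C_{\e T,\e F,\e S}^{N}\ra 0.
$$
Exactness at $W_{\e T,\e F,\e S}$ shows that the map $W_{\e T,\e F,\e S}\ra C_{\e T^{\e\prime}\be,\e F,\e S}^{\e *}$ has kernel $N_{K/F}\e\tto(\ut)$, so it induces an injection $W_{\e T,\e F,\e S}/N_{K/F}\e\tto(\ut)\hookrightarrow C_{\e T^{\e\prime}\be,\e F,\e S}^{\e *}$; together with exactness at the remaining terms, truncating the sequence there gives precisely the statement of the theorem.

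The one point demanding real care---and the main obstacle---is the commutativity of the left-hand square: that $\alpha$ agrees, under the identification of the preceding lemma, with the restriction of $\vartheta_{_{\be S}}$ to $T^{\e\prime}(F)\subset T(K)$. This is where the hypothesis on $S$ enters, through the vanishing $R^{1}j_{*}T^{\e\prime}=0$ used in that lemma. I would verify it by functoriality: the inclusion $T^{\e\prime}\hookrightarrow R_{K/F}(T_{K})$ induces maps of component-group sheaves compatible with the $\vartheta$-maps, the identification $\Phi_{v}(R_{K/F}(T_{K}))\big(\e\overline{k(v)}\e\big)=\bigoplus_{w\mid v}\text{Ind}_{G_{k(v)}}^{G_{k(w)}}\Phi_{w}(T_{K})\big(\e\overline{k(v)}\e\big)$ turns $\vartheta$ for $R_{K/F}(T_{K})$ into $\vartheta_{_{\be S}}$ (as recorded after Theorem 6.1), and, since $T_{K}$ is quasi-trivial, the groups $\Phi_{w}(T_{K})\big(\e\overline{k(w)}\e\big)$ are torsion-free by Lemma 4.8(c). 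Consequently the functorial map $\Phi_{v}(T^{\e\prime})(k(v))\ra\prod_{w\mid v}\Phi_{w}(k(w))$ kills torsion and factors through $\big(\Phi_{v}(T^{\e\prime})\big(\e\overline{k(v)}\e\big)/\text{tors}\big)^{G_{k(v)}}$; by construction this factorization is the isomorphism of the preceding lemma, which yields the square. The remaining verifications---exactness of the rows, well-definedness of $\gamma$, and that the two snake connecting maps are the stated ones---are routine diagram chases.
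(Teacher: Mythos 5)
Your proposal is correct and takes essentially the same approach as the paper: its proof applies the snake lemma to exactly the diagram you construct, with top row \eqref{rat norm}, bottom row made exact by the identification of the preceding lemma (Lemma 8.3, where the hypothesis on $S$ enters via $R^{1}j_{*}T^{\e\prime}=0$), and vertical maps $\vartheta^{\e *}_{T^{\e\prime},\e S}$, $\vartheta_{S_{K}}$, and the map defining $C^{N}_{\e T,\e F,\e S}$, after which the six-term sequence is truncated just as you do. Your explicit check of the commutativity of the left-hand square spells out a point the paper leaves implicit, but is the same argument in substance.
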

\begin{proof} This follows by applying the snake lemma to the exact commutative diagram
$$
\xymatrix{T^{\e\prime}(F)\ar[d]^(.45){\vartheta_{_{\!\be T^{\e\prime}\be,\e S}}^{*}}\ar@{^{(}->}[r]
& T(K)\ar[d]^(.45){\vartheta_{_{\!S_{\be K}}}}\ar@{->>}[r]^(.45){N_{K\lbe/\lbe F}} & N_{K/F}\e T(K)\ar[d]\\
\displaystyle\bigoplus_{v\notin
S}(\Phi_{v}(T^{\e\prime})\big(\e\overline{k(v)}\e\big)/\text{tors})^{G_{k(v)}}
\ar@{^{(}->}[r]&\displaystyle\bigoplus_{v\notin
S}\prod_{\e w\mid v}\Phi_{w}(k(w))\ar@{->>}[r]^(.55){\oplus N_{v}}&\displaystyle\bigoplus_{v\notin
S}\img N_{v},}
$$
where the top row is \eqref{rat norm}, the left-hand vertical map is \eqref{theta*} for the torus $T^{\e\prime}$ and the bottom row is exact by the previous lemma.
\end{proof}

\begin{remark} The above diagram and Proposition 4.1 show that
$\krn\vartheta_{_{\!\be T^{\e\prime}\be,\e S}}^{*}=\krn N_{\s O}$,
where $N_{\s O}\colon \tto\be\big(\e\ut\e\big)\ra\sto(U)$ is the
restriction of $N_{\be K/F}$ to $\tto\be\big(\e\ut\e\big)$. Thus, if
$S$ satisfies the hypothesis of the theorem, then Lemma 7.3 yields
an exact sequence
$$\begin{array}{rcl}
0\ra({\s T}^{\e\prime})^{\circ}(U)\ra\krn N_{\s O}&\ra&\displaystyle\bigoplus_{v\e\in\e
B^{\le\prime}\e\setminus S}H^{1}(F_{\lbe
v}^{\e\rm{nr}},T^{\e\prime}\e)^{G_{k(v)}} \ra C_{\e T^{\e\prime}\be,\e F,\e S}\\\\
&\ra& C_{\e T^{\e\prime}\be,\e F,\e S}^{\e *}\ra \displaystyle\bigoplus_{v\e\in\e
B^{\e\prime}\e\setminus\e S}H^{1}\big(k(v),H^{1}(F_{\lbe
v}^{\le\rm{nr}},T^{\e\prime}\e)\big),
\end{array}
$$
where ${\s T}^{\e\prime}$ is the N\'eron-Raynaud model of
$T^{\e\prime}$ over $U$ and $B^{\le\prime}$ denotes the set of
primes of $F$ where $T^{\e\prime}$ has bad reduction. The groups
$H^{1}(F_{\lbe
v}^{\e\rm{nr}},T^{\e\prime})^{G_{k(v)}}=\Phi_{v}(T^{\e\prime})(k(v))_{\text{tors}}$
were already considered in \cite{mrl}, \S4, when $T=\bg_{m,F}$ and
their orders were computed. See \cite{mrl}, Remark 4.4(b).
\end{remark}

The above remark and the theorem immediately yield

\begin{corollary} Assume that $S$ contains all primes of $F$ where $T^{\e\prime}$ has bad reduction. Then there exists an exact sequence
$$
0\ra W_{\e T,\e F,\e S}/N_{K/F}\le\tto\be\big(\e\ut\e\big)\ra C_{\e T^{\e\prime}\be,\e F,\e S}\ra C_{\e T,\e K,\e S_{K}}\overset{\nu}\longrightarrow C_{\e T,\e F,\e S}^{N}\ra 0,
$$
where $W_{\e T,\e F,\e S}$ is the group \eqref{w} and $\nu$ is the map \eqref{nu}.\qed
\end{corollary}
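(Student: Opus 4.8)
The plan is to deduce this corollary directly from the preceding theorem, the point being that the stronger hypothesis $S\supset B^{\e\prime}$ — where $B^{\e\prime}$ denotes the set of primes of $F$ at which $T^{\e\prime}$ has bad reduction — permits one to replace $C_{\e T^{\e\prime}\be,\e F,\e S}^{\e *}$ by $C_{\e T^{\e\prime}\be,\e F,\e S}$ in the sequence produced there.

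First I would check that the present hypothesis implies the hypothesis of the preceding theorem. The set $B^{\e\prime}$ consists precisely of the non-archimedean primes of $F$ which ramify in the minimal splitting field of $T^{\e\prime}$, since a torus has multiplicative reduction at every prime unramified in its minimal splitting field (see \cite{NX}, Proposition 1.1). In particular every wildly ramified such prime lies in $B^{\e\prime}$, so $S\supset B^{\e\prime}$ forces $S$ to contain all primes of $F$ which are wildly ramified in the minimal splitting field of $T^{\e\prime}$. The preceding theorem therefore applies and yields the exact sequence
$$
0\ra W_{\e T,\e F,\e S}/N_{K/F}\le\tto\be\big(\e\ut\e\big)\ra C_{\e T^{\e\prime}\be,\e F,\e S}^{\e *}\ra C_{\e T,\e K,\e S_{K}}\overset{\nu}\longrightarrow C_{\e T,\e F,\e S}^{N}\ra 0.
$$

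Next I would invoke Lemma 7.3 applied to the torus $T^{\e\prime}$. Because $S\supset B^{\e\prime}$, the direct sums over $B^{\e\prime}\setminus S$ occurring in that lemma are empty, so its concluding (``in particular'') assertion gives a canonical isomorphism $C_{\e T^{\e\prime}\be,\e F,\e S}^{\e *}=C_{\e T^{\e\prime}\be,\e F,\e S}$; this is exactly the degeneration recorded in the preceding remark, where the five-term sequence there collapses once $B^{\e\prime}\setminus S=\varnothing$. Substituting this identification into the displayed sequence produces the sequence asserted in the corollary. The argument is routine bookkeeping; the one point deserving a word is that the isomorphism $C_{\e T^{\e\prime}\be,\e F,\e S}^{\e *}=C_{\e T^{\e\prime}\be,\e F,\e S}$ is compatible with the two adjacent maps of the sequence — the arrow into $C_{\e T,\e K,\e S_{K}}$ and the arrow out of $W_{\e T,\e F,\e S}/N_{K/F}\le\tto\be\big(\e\ut\e\big)$ — which holds because both the isomorphism and these maps are ultimately induced by the single valuation-type map $\vartheta_{_{\be S}}$ underlying the constructions of Section 7 and the proof of the preceding theorem.
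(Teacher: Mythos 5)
Your proposal is correct and takes essentially the same route as the paper, which deduces the corollary ``immediately'' from Theorem 8.4 together with Remark 8.5 --- the latter being precisely the application of Lemma 7.3 to $T^{\e\prime}$ that you invoke, with the hypothesis $S\supset B^{\e\prime}$ annihilating the terms over $B^{\e\prime}\setminus S$ and forcing $C_{\e T^{\e\prime}\be,\e F,\e S}^{\e *}=C_{\e T^{\e\prime}\be,\e F,\e S}$. Your explicit verification that $S\supset B^{\e\prime}$ implies the wild-ramification hypothesis of Theorem 8.4 (since bad reduction for $T^{\e\prime}$ at $v$ amounts to ramification of $v$ in its minimal splitting field, by \cite{NX}, Proposition 1.1) is left tacit in the paper but is exactly the needed observation.
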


Assume now that $T$ is {\it coflasque}. Then \eqref{normtorus} is a coflasque resolution of $T^{\e\prime}$. Using this fact and Lemma 8.2, the above corollary yields the following formula for the Ono invariant $E_{\e c,\e S}(T^{\e\prime}\e)$.

\begin{corollary} Let $T$ be a coflasque $F$-torus, let $K/F$ be a finite Galois extension such that $T_{K}$ is quasi-trivial and let $T^{\e\prime}=R^{(1)}_{K/F}(T_{K})$ be the corresponding norm torus. Assume that $S$ contains all primes of $F$ where $T^{\e\prime}$ has bad reduction. Then
$$
E_{\e c,\e
S}(T^{\e\prime}\e)=\frac{[\!\be\Sha_{N,S}\le(T)]}{\big[\sto(U)\colon\!
N_{K/F}\tto\be\big(\ut\big)\big]\left[C_{\e T,\e F,\e S}\colon\!
N_{\e T,\e K,\e S_{\be K}}C_{\e T,\e K,\e S_{K}}\right]},
$$
where $\!\!\Sha_{N,S}(T)$ is the group \eqref{sha n}.\qed
\end{corollary}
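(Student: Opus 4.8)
The plan is to recognize \eqref{normtorus} as a coflasque resolution of $T^{\e\prime}$ and then to read the formula off the two four-term exact sequences already established. First I would observe that since $T$ is coflasque and $T_{K}$ is quasi-trivial, $R_{K/F}(T_{K})$ is quasi-trivial, so \eqref{normtorus} is indeed a coflasque resolution of $T^{\e\prime}$ (with quasi-trivial middle term $R_{K/F}(T_{K})$ and coflasque quotient $T$). Substituting this resolution into the definition of $E_{\e c,\e S}$ and invoking Remark 2.2, which identifies the class number of $R_{K/F}(T_{K})$ with $h_{\e T,\e K,\e S_{K}}$, gives
$$
E_{\e c,\e S}(T^{\e\prime}\e)=\frac{h_{\e T,\e K,\e S_{K}}}{h_{\e T,\e F,\e S}\,h_{\e T^{\e\prime}\be,\e F,\e S}}.
$$
It therefore remains only to express $h_{\e T^{\e\prime}\be,\e F,\e S}$ through the quantities on the right of the asserted identity.

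The second step is an order count. Every group appearing in Corollary 8.5 and in Lemma 8.2 is finite, so in each four-term exact sequence the alternating product of orders equals $1$. The sequence of Corollary 8.5 then gives $h_{\e T^{\e\prime}\be,\e F,\e S}\,[\,C_{\e T,\e F,\e S}^{N}\,]=[\,W_{\e T,\e F,\e S}\colon N_{K/F}\tto\be(\ut)\,]\,h_{\e T,\e K,\e S_{K}}$, while the sequence of Lemma 8.2 gives $[\,\sto(U)\colon W_{\e T,\e F,\e S}\,]\,[\,C_{\e T,\e F,\e S}^{N}\,]=[\!\be\Sha_{N,\e S}(T)]\,[\,N_{\e T,\e K/F,\e S}\,C_{\e T,\e K,\e S_{K}}\,]$. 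Eliminating the auxiliary order $[\,C_{\e T,\e F,\e S}^{N}\,]$ between these two relations expresses $h_{\e T^{\e\prime}\be,\e F,\e S}$ in terms of the two class numbers, the two indices of $W_{\e T,\e F,\e S}$, the order of $\Sha_{N,\e S}(T)$, and the order of the norm image.

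For the final step I would use two elementary simplifications. Because the quotients $W_{\e T,\e F,\e S}/N_{K/F}\tto\be(\ut)$ and $\sto(U)/W_{\e T,\e F,\e S}$ occur in those statements, one has the tower $N_{K/F}\tto\be(\ut)\subseteq W_{\e T,\e F,\e S}\subseteq\sto(U)$ (consistent with Remark 8.4), so the two indices of $W_{\e T,\e F,\e S}$ multiply to the single index $[\,\sto(U)\colon N_{K/F}\tto\be(\ut)\,]$. Moreover $N_{\e T,\e K/F,\e S}\,C_{\e T,\e K,\e S_{K}}$ is a subgroup of the finite group $C_{\e T,\e F,\e S}$, so its order is $h_{\e T,\e F,\e S}/[\,C_{\e T,\e F,\e S}\colon N_{\e T,\e K/F,\e S}\,C_{\e T,\e K,\e S_{K}}\,]$. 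Plugging both into the expression for $h_{\e T^{\e\prime}\be,\e F,\e S}$ and then into the formula for $E_{\e c,\e S}(T^{\e\prime}\e)$ above, the factors $h_{\e T,\e K,\e S_{K}}$ and $h_{\e T,\e F,\e S}$ cancel and the desired identity emerges.

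Since both input exact sequences are in hand, there is no conceptual obstacle here; the only point requiring care is the index bookkeeping---tracking the tower of subgroups of Remark 8.4 and confirming that each copy of $h_{\e T,\e K,\e S_{K}}$ and $h_{\e T,\e F,\e S}$ truly cancels. I would double-check the cancellation by verifying that the final answer is independent of these two ``large'' class numbers, precisely as the asserted formula predicts.
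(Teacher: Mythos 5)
Your argument is correct and is essentially the paper's own proof: the corollary is stated with \qed precisely because it follows, as you say, from recognizing \eqref{normtorus} as a coflasque resolution of $T^{\e\prime}$ (so that, by Remark 2.2 and the definition of $E_{\e c,\e S}$, one has $E_{\e c,\e S}(T^{\e\prime})=h_{\e T,\e K,\e S_{K}}/\e h_{\e T,\e F,\e S}\e h_{\e T^{\e\prime}\be,\e F,\e S}$) and then eliminating $[\e C_{\e T,\e F,\e S}^{N}\e]$ between the order relations of the two four-term exact sequences, using the tower $N_{K/F}\e\tto\be\big(\e\ut\e\big)\subseteq W_{\e T,\e F,\e S}\subseteq\sto(U)$ and $\big[N_{\e T,\e K/F,\e S}\e C_{\e T,\e K,\e S_{K}}\big]=h_{\e T,\e F,\e S}\big/\big[C_{\e T,\e F,\e S}\colon N_{\e T,\e K/F,\e S}\e C_{\e T,\e K,\e S_{K}}\big]$. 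The only blemishes are off-by-one citations (the four-term sequence you invoke is Corollary 8.6, not 8.5, and the subgroup tower is implicit in Lemma 8.2 and Corollary 8.6 rather than recorded in a remark), which do not affect the mathematics.
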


\end{document}